\theoremstyle{definition}
\newtheorem{definition}{Definition}[section]
\newtheorem{example}[definition]{Example}
\newtheorem{remark}[definition]{Remark}
\theoremstyle{plain}
\newtheorem{theorem}[definition]{Theorem}
\newtheorem{proposition}[definition]{Proposition}
\newtheorem{corollary}[definition]{Corollary}
\newtheorem{lemma}[definition]{Lemma}
\newcommand{\al}{\alpha}
\newcommand{\ep}{\varepsilon}
 \newcommand{\la}{\lambda}
\newcommand{\si}{\sigma}
\newcommand{\ph}{\varphi}
\newcommand{\om}{\omega}
\newcommand{\Om}{\Omega}
\newcommand{\mf}{\mathfrak{m}}\newcommand{\nf}{\mathfrak{n}}
\newcommand{\C}{\mathbb{C}}\newcommand{\Z}{\mathbb{Z}}
\newcommand{\K}{\mathbb{K}}
\newcommand{\Max}{\mathrm{Max}}\newcommand{\Supp}{\mathrm{Supp}}
\newcommand{\shrp}{\dagger}
\DeclareMathOperator{\Hom}{Hom}
\DeclareMathOperator{\Aut}{Aut}\DeclareMathOperator{\Id}{Id}
\DeclareMathOperator{\sgn}{sgn}
\title{Pseudo-Unitarizable Weight Modules over Generalized Weyl Algebras}
\author{Jonas T. Hartwig}
\date{}
\begin{document}
\maketitle

\begin{abstract}
We define a notion of pseudo-unitarizability for weight modules over a
generalized Weyl algebra (of rank one, with commutative coeffiecient
ring $R$), which is
assumed to carry an involution of the form $X^*=Y$, $R^*\subseteq R$.
We prove that a weight module $V$ is pseudo-unitarizable
iff it is isomorphic to its finitistic dual $V^\sharp$.
Using the classification of weight modules by Drozd, Guzner and Ovsienko,
we obtain necessary and sufficient conditions for an indecomposable
weight module to be isomorphic to its finitistic dual, and thus to
be pseudo-unitarizable. Some examples are given,
including $U_q(\mathfrak{sl}_2)$ for $q$ a root of unity.
\end{abstract}

\section{Introduction}
For a $\ast$-algebra $A$ over $\C$ and an $A$-module $V$,
a basic question is whether $V$ is unitarizable. That is,
can $V$ be equipped with a positive definite inner product which is
$A$-admissible, i.e. $(av,w)=(v,a^\ast w)$ for $a\in A,v,w\in V$?
This is so in many well-behaved examples, like simple finite-dimensional
modules over a finite-dimensional group algebra,
but unfortunately false in general.
However, the modules for which this is false
might still be pseudo-unitarizable
in the 
 sense of having an admissible inner product
which is non-degenerate but not necessarily positive definite.

A new feature for this broadened notion
is that there may exist pseudo-unitarizable indecomposable modules
which are not simple.

Such indefinite inner product spaces have been thoroughly
studied in the analytical setting of operator algebras,
see \cite{KS}.
There are also many applications to areas in physics,
for example quantum field theory. See \cite{MS} and
references therein.

On the algebraic side,
existence and uniqueness questions of such indefinite inner products
was considered in \cite{MT} in the general situation of
$A$ being a $\ast$-algebra over an algebraically closed field
and $M$ being a finite-dimensional $A$-module,
or a weight $A$-module with finite-dimensional weight spaces.
Among other things, it was shown that
an $A$-module $M$ has a non-degenerate admissible form iff
$M$ is isomorphic to its finitistic dual $M^\sharp$.
In \cite{MT2} the authors described all simple weight (with respect to a
Cartan subalgebra) modules with
finite-dimensional weight spaces
over a complex finite-dimensional semisimple Lie algebra
which are pseudo-unitarizable with a non-degenerate symmetric form.

In this paper we consider generalized Weyl algebras (GWAs).
These are certain noncommutative rings,
first introduced in \cite{B}, and studied since in many different papers
(see \cite{BB}, \cite{BvO}, \cite{BL} and references therein).
The class contains a wide range of examples such as
ambiskew polynomial rings \cite{Jor},
which includes Noetherian generalized down-up algebras \cite{CS};
$U(\mathfrak{sl}_2)$ and its various
deformations and generalizations (see for example \cite{BvO})
as well as the first Weyl algebra and quantum Weyl algebra.
Unitarizable modules over GWAs (and, more generally, twisted GWAs)
with ground field $\C$ were studied in \cite{MazTur2002}.
In particular simple unitarizable weight modules were classified.

We will consider GWAs of rank one, $A=R(\si,t)$, and assume that $R$ is a
commutative ring.
One of the problems with GWAs in this generality is that there is no canonical
choice of a ground field.
For such GWAs, all indecomposable weight
modules with finite-dimensional weight spaces
were classified in \cite{DGO},
up to indecomposable elements in a skew polynomial ring over a field.
There are five families of modules,
some of them depending on many parameters.
It is interesting, therefore, to ask
if some of these modules possess extra structure.

The purpose of this paper is two-fold:
\begin{enumerate}
\item[1)] To define an appropriate notion of pseudo-unitarizability for weight modules
over a generalized Weyl algebra equipped with an involution
satisfying $X^\ast=Y$, $Y^\ast=X$, $R^\ast\subseteq R$.
See Definition \ref{dfn:pu}. 

\item[2)]
To find conditions on the parameters
of the indecomposable weight modules $V$ over a
generalized Weyl algebra, which
are necessary and sufficient for the
modules to be pseudo-unitarizable. The main results here are
Theorems \ref{thm:inforbnobr}, \ref{thm:inforbwbr}, \ref{thm:Vomfdual},
\ref{thm:firstkind}, and \ref{thm:2ndkind}
which completely answers this question in the case of real orbit $\om$,
i.e. $\mf^\ast=\mf\;\forall\mf\in\om$.
\end{enumerate}

After recalling some basic definitions in Section \ref{sec:setup},
we give in Section \ref{sec:a}
the definition of admissible form
and of the finitistic dual $V^\sharp$.
We prove analogs of some results from \cite{MT}
such as Proposition \ref{prop:bij} on the correspondence
between forms and morphisms.

In Section \ref{sec:DGOclass} we recall the
classification theorem from \cite{DGO}.
We have collected all notation necessary in Section \ref{sec:notation}.

In Section \ref{sec:description} we consider in turn each type 
of indecomposable weight module and give necessary and sufficient
conditions for the existence of a non-degenerate admissible form.

We end by considering some examples in Section \ref{sec:examples}.
In particular we obtain in Section \ref{sec:ex_uqsl2} conditions for
indecomposable non-simple modules over $U_q(\mathfrak{sl}_2)$
($q$ a root of unity), to have non-degenerate admissible forms.

\section{Setup} \label{sec:setup}
Let
\begin{itemize}
\item $R$ be a commutative ring with $1$,
\item $\ast:R\to R$ an automorphism of order $1$ or $2$,
\item $\si:R\to R$ an automorphism commuting with $\ast$, and
\item $t\in R$ be selfadjoint, i.e. $t^\ast=t$.
\end{itemize}
Let $A=R(\si,t)$ be the associated \emph{generalized Weyl algebra} (GWA) \cite{B}.
Thus $A$ is the ring generated by the set $R\cup\{X,Y\}$, where $X,Y$ are two
new symbols,
with the relations that $R$ is a subring of $A$ and
\begin{equation}\label{eq:gwarels}
YX=t,\quad XY=\si(t),\quad Xr=\si(r)X,\quad Yr=\si^{-1}(r)Y\quad \forall r\in R.
\end{equation}
By \eqref{eq:gwarels}, $\ast$ extends to an involution on $A$
(i.e. $(a+b)^\ast=a^\ast+b^\ast, (ab)^*=b^*a^*$, $a^{\ast\ast}=a$, $\forall a,b\in A$)
by requiring
\[X^\ast=Y,\qquad Y^\ast=X.\]
Relations \eqref{eq:gwarels} also imply that $A$ is a $\Z$-graded ring $A=\oplus_{n\in\Z} A_n$
with gradation given by $\deg X=1, \deg Y=-1, \deg r=0\; \forall r\in R$.
Let $\Om$ be the set of orbits for the action of $\si$ on the set $\Max(R)$
of maximal ideals of $R$. For $\om\in\Om$ we let $R_\om$ denote the direct sum
of all the $R$-modules $R/\mf$ for $\mf\in\om$:
\begin{equation}
R_\om=\bigoplus_{\mf\in\om} R/\mf.
\end{equation}
The $R$-module $R_\om$ will be used as a subtitute for a ground field, when defining
admissible forms in Section \ref{sec:admiss}.
The automorphism $\si$ induces isomorphisms $R/\mf\to R/\si(\mf)$, $\mf\in\Max(R)$,
which we also denote by $\si$. Extending additively, we get a map
$\si:R_\om\to R_\om$.
The automorphism $\ast$ of $R$ induces a map $R/\mf\to R/\mf^\ast$, and hence
a map $R_\om\to R_{\om^\ast}$ which will be called \emph{conjugation} and denoted $\la\mapsto \overline{\la}$.

\begin{remark}
Let $A=R(\si,t)$ be a GWA and $\ast$ an anti-involution on $A$ satisfying $R^\ast\subseteq R$ and
$X^\ast=\ep Y$, where $\ep\in R$ is invertible. Then, after a change of
generators, we can assume $\ep=1$ and thus that $t^\ast=t$.
Indeed, set $X_1=X$, $Y_1=\ep Y$ and $t_1=Y_1X_1=\ep t$.
Then $X_1Y_1=X\ep Y=\si(\ep)\si(t)=\si(t_1)$. Clearly $X_1r=\si(r)X_1$ and
$Y_1r=\si^{-1}(r)Y_1$, $\forall r\in R$. Moreover $X_1^\ast=Y_1$
so that $t_1^\ast=t_1$.
\end{remark}

\begin{definition}
A module $V$ over a ring, which contains $R$ as a subring, will be called a
\emph{weight module} if
$V=\oplus_{\mf\in\Max(R)}V_\mf$, where $V_\mf=\{v\in V:\mf v=0\}$. The $R$-submodules
$V_\mf$ of $V$ are
called \emph{weight spaces} and elements of $V_\mf$ are \emph{weight vectors of weight $\mf$}.
The \emph{support of $V$}, denoted $\Supp(V)$,
is defined as the set $\{\mf\in\Max(R):V_\mf\neq 0\}$.
\end{definition}

\section{Admissible forms and the finitistic dual} \label{sec:a}

\subsection{Motivation of definition}\label{sec:motiv}
In section \ref{sec:admiss} we will define an admissible form on a
weight $A$-module $V$ to be a certain biadditive form on $V$ with
values in the $R$-module $R_\om$. To motivate this definition,
let us first consider another, at first sight more natural, attempt at a definiton.

As we will see, a problem appears when $\om$ is finite.
Suppose therefore that $\om\in\Om$ is a finite orbit. Let $p=|\om|$.
Let $\om\in\Om$ and let $V$ be a weight module over $A$ with $\Supp(V)\subseteq\om$.
If we choose and fix an element $\mf\in\om$, we can define
a $R/\mf$-vector space structure on $V$ by $(r+\mf)v=\si^k(r)v$
if $v\in V_{\si^k(\mf)}$ and $0\le k<p$.
Then, for $v\in V_{\si^k(\mf)}$ and $\la=r+\mf\in R/\mf$,
\[X^p\la v= X^p \si^k(r) v=\si^{p+k}(r)X^p v=\si^p(\la)X^pv.\]
It would perhaps seem natural to define $V$ to be pseudo-unitarizable
if there is a nonzero \emph{admissible $R/\mf$-form} on $V$, i.e.
a map $G:V\times V\to R/\mf$ satisfying
\begin{subequations}
\begin{align}
\label{eq:Gform1}&\text{$G$ is additive in each argument},\\
\label{eq:Gform2}&G(\la v,w)=\la G(v,w)&&\text{for all $v,w\in V,\; \la\in R/\mf$,}\\
\label{eq:Gform3}&G(av,w)=G(v,a^\ast w)&&\text{for all $v,w\in V,\; a\in A$.}
\end{align}
\end{subequations}
However, then, for $v,w\in V$ and $\la\in R/\mf$,
\[G(X^p\la v,w)=G(\la v,Y^p w)=\la G(v,Y^p w)=\la G(X^p v,w),\]
while on the other hand, 
\[G(X^p\la v,w)=G(\si^p(\la)X^p v,w)=\si^p(\la)G(X^p v,w). \]
Thus, any weight module $V$ with $\Supp(V)\subseteq\om$ on which
$X^p\neq 0$ (or $Y^p\neq 0$ for analogous reasons)
would automatically be excluded from the possibility of being
pseudo-unitarizable (at least with a non-degenerate form),
unless $\si^p:R/\mf\to R/\mf$ is the
identity map for some (hence all) $\mf\in\om$.

Although $\si^p:R/\mf\to R/\mf$ is the identity in many important
examples (for example, if $R$ is a finitely generated algebra over
an algebraically closed field $k$ and
$\si$ is a $k$-algebra automorphism, then
$\si^p:R/\nf\to R/\nf$ is the identity
for any $\nf\in\Max(R)$ with $\si^p(\nf)=\nf$),
we feel that this notion of admissible form is too restrictive.

To remedy this situation we introduce in Section \ref{sec:admiss} a modified
definition of pseudo-unitarizability which has three advantages. First, 
no unnecessary restrictions applies as to which modules
can be pseudo-unitarizable when
$\si^p:R/\mf\to R/\mf$ is nontrivial.
 Secondly, the definition does not depend on any unnatural choice
of maximal ideal in the orbit. And thirdly, in the special case
when $\si^p:R/\mf \to R/\mf$ really is the identity map (and also
when the orbit $\om$ is infinite), the
definition is equivalent to the one above
in the sense that one form can be obtained
from the other in a bijective manner, as described in Proposition \ref{prop:fieldform2}.

\subsection{Admissible forms and pseudo-unitarizability}\label{sec:admiss}
Let $\om\in\Om$ and $V$ be a weight module over $A$ with $\Supp(V)\subseteq\om$.
\begin{definition}\label{dfn:admiss}
An \emph{admissible form} $F$ on $V$ is a map
 \[F:V\times V\to R_\om\]
such that
\begin{subequations}
\begin{align}
\label{eq:adm1nyy}
&\text{$F$ is additive in each argument,}\\
\label{eq:adm2nyy}
&F(rv,w)=rF(v,w) &&\text{for all $v,w\in V,\; r\in R$,}\\
\label{eq:adm3nyy}
&F(av,w)=\si^{\deg a}\big(F(v ,a^\ast w)\big) &&\text{for all $v,w\in V,\; a\in\cup_{n\in\Z}A_n$.}
\end{align}
\end{subequations}\label{eq:admnyy}
An admissible form $F$ is called \emph{non-degenerate} if for any nonzero $v\in V$ there
exist $w_1,w_2\in V$ such that $F(w_1,v)\neq 0\neq F(v,w_2)$. 
\end{definition}
\begin{definition}\label{dfn:pu}
A weight module $V$ over $A$, whose support is contained in an orbit,
is \emph{pseudo-unitarizable} if there exists a non-degenerate admissible form on $V$.
\end{definition}

Note that, since $\deg a^\ast=-\deg a$ for homogenous $a\in A$,
relation \eqref{eq:adm3nyy} is equivalent to
$F(v,aw)=\si^{\deg a}\big(F(a^\ast v,w)\big)$.

\subsection{Relation to admissible $R/\mf$-forms}
In view of the discussion in Section \ref{sec:motiv} we make
the following definition.
\begin{definition}\label{dfn:torsion_trivial}
We call $\om\in\Om$ \emph{torsion trivial} if
whenever $\mf\in\om$, $n\in\Z$ and $\si^n(\mf)=\mf$
then the induced map $\si^n:R/\mf\to R/\mf$ is the identity.
\end{definition}

Assume that $\om\in\Om$ is torsion trivial.
For $\mf_1,\mf_2\in\om$, say $\mf_2=\si^n(\mf_1)$,
define $\si_{\mf_1,\mf_2}=\si^n:R/\mf_1\to R/\mf_2$.
Then $\si_{\mf_1,\mf_2}$ is independent of the choice (if any)
of $n$, since $\om$ is torsion trivial.
Fix $\mf\in\om$. Let $V$ be a weight $A$-module with $\Supp(V)\subseteq\om$.
Give $V$ the structure of an $R/\mf$-vector space by
$(r+\mf)v = \si_{\mf,\si^k(\mf)}(r+\mf)v=\si^k(r) v$ for $v\in V_{\si^k(\mf)}$ and
$r+\mf\in R/\mf$.

\begin{proposition} \label{prop:fieldform2}
When $\om$ is torsion trivial,
there is a bijective correspondence between admissible
forms $F$ and admissible $R/\mf$-forms $G$ on $V$.
\end{proposition}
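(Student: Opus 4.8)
The plan is to write down explicit, mutually inverse assignments $F\mapsto G$ and $G\mapsto F$ and then check that each of the three axioms on one side translates into the corresponding axiom on the other. Throughout, $\mf\in\om$ is the fixed ideal used to give $V$ its $R/\mf$-vector space structure, and I would use freely the isomorphisms $\si_{\mf_1,\mf_2}\colon R/\mf_1\to R/\mf_2$ defined above; torsion triviality is exactly what makes these maps, and their composition law, unambiguous, and it is the only place that hypothesis enters.

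The first step is a structural observation about the target of an admissible form: if $F$ is an admissible form and $v\in V_{\mf_1}$, then $F(v,w)$ lies in the single summand $R/\mf_1$ of $R_\om$, for every $w\in V$. Indeed, for $r\in\mf_1$ we have $rv=0$, hence $r\,F(v,w)=0$ by \eqref{eq:adm2nyy}; writing $F(v,w)=(x_{\mf'})_{\mf'}$ with $x_{\mf'}\in R/\mf'$, each $x_{\mf'}$ is annihilated by the image of $\mf_1$ in $R/\mf'$, and for $\mf'\neq\mf_1$ that image is all of $R/\mf'$ (distinct maximal ideals are comaximal), forcing $x_{\mf'}=0$. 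For an admissible $R/\mf$-form there is nothing to say, its values lying in the field $R/\mf$ by fiat. I would also record, from $Xr=\si(r)X$, $Yr=\si^{-1}(r)Y$ and maximality of the relevant ideals, that a homogeneous $a\in A_n$ maps $V_{\mf_1}$ into $V_{\si^n(\mf_1)}$, and that the restriction of $\si^n\colon R_\om\to R_\om$ to the summand $R/\mf_1$ equals $\si_{\mf_1,\si^n(\mf_1)}$.

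Now define $\Phi(F)=G$ and $\Psi(G)=F$ by
\[
G(v,w)=\sum_{\mf_1\in\om}\si_{\mf_1,\mf}\bigl(F(v_{\mf_1},w)\bigr),\qquad
F(v,w)=\sum_{\mf_1\in\om}\si_{\mf,\mf_1}\bigl(G(v_{\mf_1},w)\bigr),
\]
where $v_{\mf_1}$ is the $\mf_1$-component of $v$ (a finite sum, every occurring $\mf_1$ lying in $\om$ since $\Supp(V)\subseteq\om$); in the first formula each $F(v_{\mf_1},w)$ lies in $R/\mf_1$ by the previous paragraph, so $\si_{\mf_1,\mf}$ may legitimately be applied. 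Both maps visibly produce biadditive forms, and $\Psi\circ\Phi=\mathrm{id}$, $\Phi\circ\Psi=\mathrm{id}$ follow at once from $\si_{\mf,\mf_1}\circ\si_{\mf_1,\mf}=\si_{\mf,\mf}=\mathrm{id}$. The $R/\mf$-linearity \eqref{eq:Gform2} of $\Phi(F)$ in the first variable comes from \eqref{eq:adm2nyy} together with the definition $(r+\mf)v=\si^k(r)v$ on $V_{\si^k(\mf)}$ and the fact that $\si_{\si^k(\mf),\mf}$ intertwines multiplication by $\si^k(r)$ with multiplication by $r$; the $R$-linearity \eqref{eq:adm2nyy} of $\Psi(G)$ is the same computation read backwards.

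It remains to match the third axioms. By biadditivity it suffices to take $v\in V_{\mf_1}$ a weight vector and $a\in A_n$ homogeneous, and then, using the observations above,
\[
G(av,w)=\si_{\si^n(\mf_1),\mf}\bigl(F(av,w)\bigr)=\si_{\si^n(\mf_1),\mf}\,\si^n\bigl(F(v,a^\ast w)\bigr)=\si_{\mf_1,\mf}\bigl(F(v,a^\ast w)\bigr)=G(v,a^\ast w),
\]
where the second equality is \eqref{eq:adm3nyy} and the third uses that $\si^n$ restricts to $\si_{\mf_1,\si^n(\mf_1)}$ on $R/\mf_1$ together with the composition law $\si_{\si^n(\mf_1),\mf}\circ\si_{\mf_1,\si^n(\mf_1)}=\si_{\mf_1,\mf}$ (again torsion triviality); this shows $\Phi(F)$ satisfies \eqref{eq:Gform3} on homogeneous elements, hence on all of $A$ by additivity. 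Running the chain in reverse shows $\Psi(G)$ satisfies \eqref{eq:adm3nyy} whenever $G$ satisfies \eqref{eq:Gform3}. I expect the only real friction to be the bookkeeping with the various $\si_{\mf_i,\mf_j}$ and checking that every invocation of the composition law is licensed by torsion triviality; no genuinely new idea is needed. Finally, since each $\si_{\mf_1,\mf_2}$ is a bijection, $F$ is non-degenerate iff $\Phi(F)$ is, so the correspondence also matches up the non-degenerate forms, and hence the two notions of pseudo-unitarizability.
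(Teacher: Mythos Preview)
Your proof is correct and follows essentially the same approach as the paper: both construct the map $F\mapsto G$ by post-composing with the ``collapse'' $\si_{\mf_1,\mf}$ on each summand $R/\mf_1$, and the inverse $G\mapsto F$ by spreading back via $\si_{\mf,\mf_1}$ on weight components, then verify the axioms term by term. Your version is slightly more explicit in a few places---you isolate the observation $F(V_{\mf_1},V)\subseteq R/\mf_1$ up front (the paper invokes it mid-proof), you check the two constructions are mutual inverses (the paper leaves this implicit), and you note the correspondence preserves non-degeneracy---but the substance is identical.
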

\begin{proof}
Given $F$, define $G$ by $G=\pi\circ F$, where $\pi:R_\om\to R/\mf$
is given by
\[\pi\big( (\la_\nf)_{\nf\in\om} \big) = \sum_{\nf\in\om}\si_{\nf,\mf}(\la_\nf).\]
Since $F$ is biadditive, so is $G$. To verify \eqref{eq:Gform2},
let $\nf=\si^k(\mf)\in\om$ be arbitrary, $v\in V_{\si^k(\mf)}$, $w\in V$
and $\la=r+\mf\in R/\mf$. Then,
using that $F(V_\nf,V)\subseteq R/\nf$, which follows from \eqref{eq:adm2nyy},
we have
\begin{align*}
G(\la v,w)&=\pi(F(\si^k(r)v,w))=\si^{-k}\big(\si^k(r)F(v,w)\big)
=r\si^{-k}\big(F(v,w)\big)=\\&=\la G(v,w). 
\end{align*}
To show \eqref{eq:Gform3}, let $\nf\in\om, v\in V_\nf, a\in A_k$.
Then $av\in V_{\si^k(\nf)}$ so
\begin{align*}
G(av,w)&=\si_{\si^k(\nf),\mf}\big(F(av,w)\big)=\si_{\si^k(\nf),\mf}\si^k\big( F(v,a^\ast w ) \big)
=\si_{\nf,\mf}\big( F(v,a^\ast w)\big)=\\
&=G(v,a^\ast w).
\end{align*}
This proves that $G$ is an admissible $R/\mf$-form on $V$.

Conversely, given $G$, define $F$ by
\[F(v,w)=\si_{\mf,\nf}\big(G(v,w))\quad\text{for $v\in V_\nf,\; w\in V$.}\]
Then $F$ is biadditive. To prove \eqref{eq:adm2nyy}, let
 $\nf=\si^k(\mf)\in\om, v\in V_{\nf}, w\in V$ and $r\in R$. Put
 $\la=r+\mf$. We have
\begin{align*}
F(\si^k(r)v,w)&=\si^k\big(G(\si^k(r)v,w)\big)=
\si^k\big(G(\la v,w) \big)=\si^k\big( \la G(v,w)\big)=\\
&=\si^k(r)\si^k\big(G(v,w)\big)=\si^k(r) F(v,w).
\end{align*}
Since $r$ was arbitrary, \eqref{eq:adm2nyy} is proved.
It remains to show that $F$ satisfies \eqref{eq:adm3nyy}.
Let $v\in V_\nf, a\in A_k$. Then
\[F(av,w)=\si_{\mf,\si^k(\nf)}\big(G(av,w)\big)
=\si^k\circ \si_{\mf,\nf}\big( G(v,a^\ast w)\big) = \si^k\big( F(v,a^\ast w)\big).\]
Thus $F$ is an admissible form on $V$.
\end{proof}

\subsection{Symmetric and real orbits} \label{sec:symorb}
\begin{definition}
An orbit $\om\in\Om$ is called \emph{symmetric} if $\mf^\ast\in\om$
for any $\mf\in\om$, and \emph{real} if $\mf^\ast=\mf$
for any $\mf\in\om$.
\end{definition}
\begin{proposition}
If $\om$ is symmetric but not real, then
$|\om |$ is finite, even, and $\mf^\ast=\si^{|\om |/2}(\mf)$
for any $\mf\in\om$.
\end{proposition}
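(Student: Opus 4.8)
The plan is to use that $\ast$ and $\si$ commute, so that $\ast$ restricts to a permutation of the $\si$-orbit $\om$ which commutes with the (transitive) action of $\si$ on $\om$. First I would fix $\mf_0\in\om$; symmetry of $\om$ provides an integer $n$ with $\mf_0^\ast=\si^n(\mf_0)$. Since $\ast\si=\si\ast$, for any $\mf=\si^k(\mf_0)\in\om$ we then get $\mf^\ast=\si^k(\mf_0^\ast)=\si^n(\mf)$, so the \emph{same} exponent $n$ works for every element of the orbit. In particular $\mf^\ast=\mf$ for one $\mf\in\om$ iff it holds for all of them, so the hypothesis ``$\om$ not real'' is equivalent to $\mf_0^\ast\neq\mf_0$.

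Next I would apply $\ast$ twice: as $\ast$ has order $1$ or $2$, $\mf_0=\mf_0^{\ast\ast}=\si^{2n}(\mf_0)$, hence $\si^{2n}$ fixes $\mf_0$, and therefore every point of $\om$ by the previous step. If $\om$ were infinite, the ideals $\si^k(\mf_0)$, $k\in\Z$, would be pairwise distinct, forcing $2n=0$, i.e. $n=0$, i.e. $\mf^\ast=\mf$ for all $\mf\in\om$ --- contradicting that $\om$ is not real. So $\om$ is finite; write $p=|\om|$, so that $\si^p=\Id$ on $\om$.

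Now I have $p\mid 2n$ (from $\si^{2n}=\Id$ on $\om$) while $p\nmid n$ (since $\mf_0^\ast\neq\mf_0$). Setting $d=\gcd(n,p)$, the relation $(p/d)\mid 2(n/d)$ together with $\gcd(n/d,\,p/d)=1$ gives $(p/d)\mid 2$; and $p/d\neq 1$ because $p\nmid n$, so $p=2d$. Thus $p$ is even, $d=p/2$, and $n$ is an odd multiple of $p/2$, whence $\si^n=\si^{p/2}$ as maps $\om\to\om$. Combining with the first step, $\mf^\ast=\si^n(\mf)=\si^{|\om|/2}(\mf)$ for all $\mf\in\om$, as claimed.

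I do not expect a serious obstacle; the only point requiring care is that when $\om$ is finite the exponent $n$ is well defined only modulo $p$, so every statement about $n$ must be read modulo $p$. In effect it is the finite/infinite dichotomy --- resolved via distinctness of the $\si^k(\mf_0)$ --- that carries the argument, the remainder being the elementary arithmetic of the last paragraph.
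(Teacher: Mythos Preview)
Your proof is correct and follows essentially the same approach as the paper's: fix a point, use commutativity of $\ast$ and $\si$ to get a uniform exponent $n$, apply $\ast$ twice to obtain $\si^{2n}(\mf_0)=\mf_0$, deduce finiteness, and finish with arithmetic. The only difference is cosmetic: after establishing $p\mid 2n$ and $p\nmid n$, the paper simply normalizes $n$ to lie in $\{1,\dots,p-1\}$ and observes that $0<2n<2p$ forces $2n=p$, whereas you reach the same conclusion via a (correct but slightly more elaborate) $\gcd$ argument.
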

\begin{proof}
Since $\om$ is symmetric but not real,
there is some $\nf\in\om$ such that $\nf\neq \nf^*=\si^N(\nf)$ for some $N\neq 0$.
Then
\[\nf=\nf^{\ast\ast}=\si^N(\nf)^\ast=\si^N(\nf^\ast)=\si^{2N}(\nf).\]
Hence $|\om |=p<\infty$ and $2N$ is a multiple of $p$.
Without loss of generality we can assume $0<N<p$. Then $2N=p$ is the only
possibility. Thus $\nf^\ast=\si^{|\om |/2}(\nf)$. Since any $\mf\in\om$
has the form $\si^k(\nf)$, and $\si$ and $\ast$ commute,
it follows that $\mf^\ast=\si^{|\om |/2}(\mf)$ for any $\mf\in\om$. 
\end{proof}

\subsection{Orthogonality of weight spaces} \label{sec:orthog}

\begin{proposition} \label{prop:orthog}
Let $\om\in\Om$ and let $V$ be a weight $A$-module with $\Supp(V)\subseteq\om$.
If $F$ is an admissible form on $V$, then $F(V_\mf,V_\nf)=0$ for any $\mf,\nf\in\om$
with $\mf\neq\nf^\ast$.
\end{proposition}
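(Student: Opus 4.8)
The plan is to push everything through the degree-zero part of the admissibility axioms, where the two arguments of $F$ interact only via the involution on $R$, and then invoke comaximality of distinct maximal ideals. Fix $\mf,\nf\in\om$ with $\mf\neq\nf^\ast$, and let $v\in V_\mf$, $w\in V_\nf$. Taking $a=r\in R=A_0$ in \eqref{eq:adm3nyy} (so $\deg a=0$) and comparing with \eqref{eq:adm2nyy} gives, for every $r\in R$,
\[
r\,F(v,w)=F(rv,w)=\si^0\big(F(v,r^\ast w)\big)=F(v,r^\ast w).
\]

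Next I would exploit this identity from both sides. If $r\in\mf$ then $rv=0$ since $v\in V_\mf$, so $r\,F(v,w)=0$; hence $\mf\cdot F(v,w)=0$. If instead $r^\ast\in\nf$ then $r^\ast w=0$ since $w\in V_\nf$, so again $r\,F(v,w)=F(v,r^\ast w)=0$; because $\ast$ is an involution of $R$, the condition $r^\ast\in\nf$ is equivalent to $r\in\nf^\ast$, so this says $\nf^\ast\cdot F(v,w)=0$. Thus the element $F(v,w)\in R_\om$ is annihilated by both $\mf$ and $\nf^\ast$.

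Finally, since $\mf$ and $\nf^\ast$ are distinct maximal ideals of $R$ they are comaximal, $\mf+\nf^\ast=R$, so $1=a+b$ for some $a\in\mf$, $b\in\nf^\ast$, and $F(v,w)=a\,F(v,w)+b\,F(v,w)=0$. As $v\in V_\mf$ and $w\in V_\nf$ were arbitrary and $F$ is biadditive, $F(V_\mf,V_\nf)=0$. The argument is essentially bookkeeping; the only points needing a little care are the identification $r^\ast\in\nf\iff r\in\nf^\ast$ and the fact that, although $F$ takes values in the $R$-module $R_\om$ rather than in $R$ itself, the annihilator/comaximality reasoning applies to it verbatim. (Alternatively one could observe that being killed by $\mf$ forces $F(v,w)$ into the summand $R/\mf$ of $R_\om=\bigoplus_{\mathfrak{p}\in\om}R/\mathfrak{p}$, and being killed by $\nf^\ast$ forces it into the summand indexed by $\nf^\ast$; these are distinct summands when $\mf\neq\nf^\ast$, so $F(v,w)=0$ — the comaximality phrasing has the advantage of not requiring one to discuss whether $\nf^\ast\in\om$.)
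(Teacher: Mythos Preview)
Your proof is correct and follows the same argument as the paper: show that both $\mf$ and $\nf^\ast$ annihilate $F(v,w)$ by combining \eqref{eq:adm2nyy} and the degree-zero case of \eqref{eq:adm3nyy}, then invoke comaximality of distinct maximal ideals. The paper compresses this into the single line $(\mf+\nf^\ast)F(V_\mf,V_\nf)=F(\mf V_\mf,V_\nf)+F(V_\mf,\nf V_\nf)=0$, but the content is identical to what you have written out in detail.
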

\begin{proof}
By \eqref{eq:adm2nyy} and \eqref{eq:adm3nyy},
\[
(\mf+\nf^\ast)F(V_\mf,V_\nf)=F(\mf V_\mf,V_\nf)+F(V_\mf,\nf V_\nf)=0.
\]
If $\mf\neq\nf^\ast$ then $\mf+\nf^\ast=R\ni 1$ so $F(V_\mf,V_\nf)=0$.
\end{proof}

\begin{corollary} \label{cor:orthog1}
Let $\om\in\Om$ be an orbit. If there exists a pseudo-unitarizable weight $A$-module $V$
with $\Supp(V)\subseteq\om$, then $\om$ is symmetric.
\end{corollary}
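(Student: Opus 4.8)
The plan is to read the statement as a direct consequence of Proposition \ref{prop:orthog} once we invoke non-degeneracy, so the argument is short; the only point requiring care is passing from a single maximal ideal in $\Supp(V)$ to the whole orbit $\om$. Throughout I take $V\neq 0$ (a pseudo-unitarizable module being understood to be nonzero), so that $\Supp(V)\neq\emptyset$.

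First I would fix a non-degenerate admissible form $F$ on $V$, choose $\mf_0\in\Supp(V)\subseteq\om$, and pick a nonzero weight vector $v\in V_{\mf_0}$. By non-degeneracy there is $w\in V$ with $F(v,w)\neq 0$. Writing $w=\sum_{\nf\in\om}w_\nf$ according to the weight space decomposition $V=\bigoplus_{\nf\in\om}V_\nf$ and using that $F$ is additive in the second argument, we get $F(v,w)=\sum_{\nf\in\om}F(v,w_\nf)$, hence $F(v,w_\nf)\neq 0$ for some $\nf\in\om$. In particular $F(V_{\mf_0},V_\nf)\neq 0$, so the contrapositive of Proposition \ref{prop:orthog} gives $\mf_0=\nf^\ast$; since $\nf\in\om$ this yields $\mf_0^\ast=\nf\in\om$.

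It remains to upgrade this to an arbitrary $\mf\in\om$. As $\om$ is a single orbit of $\si$ on $\Max(R)$ containing $\mf_0$, we may write $\mf=\si^k(\mf_0)$ for some $k\in\Z$. Since $\ast$ and $\si$ commute, $\mf^\ast=\si^k(\mf_0)^\ast=\si^k(\mf_0^\ast)$, and because $\mf_0^\ast\in\om$ and $\om$ is stable under $\si$, we conclude $\mf^\ast\in\om$. Thus $\mf^\ast\in\om$ for every $\mf\in\om$, i.e.\ $\om$ is symmetric.

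I do not expect a genuine obstacle: all the substantive work is already contained in Proposition \ref{prop:orthog}. The only subtlety is the degenerate case $V=0$, for which the zero form is vacuously non-degenerate while $\om$ need not be symmetric; this is excluded by the standing assumption that $V$ is nonzero.
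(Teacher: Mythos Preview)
Your proof is correct and follows essentially the same route as the paper: use Proposition~\ref{prop:orthog} to obtain $\mf_0^\ast\in\om$ for one $\mf_0\in\Supp(V)$, then transport this along the orbit via the commutativity of $\si$ and $\ast$. The only cosmetic difference is that the paper simply says the form is nonzero on some pair of weight spaces, whereas you spell this out via non-degeneracy and a weight decomposition of $w$; your remark on the degenerate case $V=0$ is a fair caveat the paper leaves implicit.
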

\begin{proof}
If $V$ is pseudo-unitarizable, it has a nonzero admissible form $F$. Since 
$F$ is nonzero
and $V$ is a weight module,
 $F(V_\mf,V_\nf)\neq 0$ for some $\mf,\nf\in\Supp(V)\subseteq\om$.
By \mbox{Proposition \ref{prop:orthog}},
 $\mf^\ast=\nf\in\om$. If $\mf_1\in\om$ is arbitrary, then $\mf_1=\si^n(\mf)$ for some
 $n$ and $\mf_1^\ast=\si^n(\mf)^\ast=\si^n(\mf^\ast)=\si^n(\nf)\in\om$.
This proves that $\om$ is symmetric.
\end{proof}
\begin{corollary} \label{cor:orthog2}
If $\om\in\Om$ is real and $V$ is a weight $A$-module with $\Supp(V)\subseteq\om$,
then the weight spaces of $V$ are pairwise orthogonal with respect to any
admissible form. 
\end{corollary}
\begin{proof}
This is immediate from Proposition \ref{prop:orthog}.
\end{proof}

\subsection{The finitistic dual $V^\sharp$}
Let $\om\in\Om$ and $V$ be a weight module over $A$ with $\Supp(V)\subseteq\om$.
Suppose $F$ is an admissible form on $V$. Let $u\in V$. Define
 $\tilde F_u:V\to R_\om$ by $\tilde F_u(v)=F(u,v)$.
\begin{proposition}\label{prop:tildeFu}
The map $\tilde F_u$ has the following properties:
\begin{subequations}
\begin{align}
\label{eq:tildeFu1}
& \tilde F_u(v_1+v_2)=\tilde F_u(v_1)+ \tilde F_u(v_2)&\forall v_1,v_2\in V,\\
\label{eq:tildeFu2}
& \tilde F_u(rv)=r^\ast \tilde F_u(v)&\forall r\in R, v\in V,\\
\label{eq:tildeFu3}
& \tilde F_u(V_\mf)=0\quad\text{for all but finitely many $\mf\in\om$.}
\end{align}
\end{subequations}
\end{proposition}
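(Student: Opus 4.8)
The plan is to verify the three properties directly from the defining conditions of an admissible form, treating each item in turn and isolating the one genuinely substantive point.

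\medskip

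\noindent\textbf{Additivity \eqref{eq:tildeFu1}.} This is immediate: $\tilde F_u(v_1+v_2)=F(u,v_1+v_2)$, and $F$ is additive in its second argument by \eqref{eq:adm1nyy}, so $F(u,v_1+v_2)=F(u,v_1)+F(u,v_2)=\tilde F_u(v_1)+\tilde F_u(v_2)$. No further work is needed here.

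\medskip

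\noindent\textbf{Semilinearity \eqref{eq:tildeFu2}.} First I would handle homogeneous $a\in A_n$ using the remark following Definition \ref{dfn:pu}, namely that \eqref{eq:adm3nyy} is equivalent to $F(v,aw)=\si^{\deg a}\big(F(a^\ast v,w)\big)$. Apply this with $a=r\in R=A_0$, so $\deg a=0$ and $\si^{\deg a}=\Id$: we get $F(u,rv)=F(r^\ast u,v)$. Then combine with \eqref{eq:adm2nyy} applied to the first argument, $F(r^\ast u,v)=r^\ast F(u,v)$, to conclude $\tilde F_u(rv)=F(u,rv)=r^\ast F(u,v)=r^\ast\tilde F_u(v)$. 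The only mild subtlety is making sure the scalar $r^\ast$ lands in $R$ so that the product $r^\ast F(u,v)$ makes sense in $R_\om$; this is exactly the standing hypothesis $R^\ast\subseteq R$ from Section \ref{sec:setup}, and the action of $R$ on $R_\om$ is the one used in \eqref{eq:adm2nyy}.

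\medskip

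\noindent\textbf{Finite support \eqref{eq:tildeFu3}.} This is the one step requiring a genuine argument rather than a one-line manipulation, and I expect it to be the main (modest) obstacle. The idea is to use orthogonality of weight spaces, Proposition \ref{prop:orthog}: if $u\in V_\mf$ for a single $\mf$, then $F(u,V_\nf)=\tilde F_u(V_\nf)=0$ for every $\nf\neq\mf^\ast$, so $\tilde F_u$ vanishes on all but the single weight space $V_{\mf^\ast}$ — in particular on all but finitely many. For general $u\in V$, write $u=\sum_{\mf}u_\mf$ as a finite sum of weight vectors (possible since $V$ is a weight module, so this decomposition has only finitely many nonzero terms, say for $\mf$ in a finite set $S\subseteq\om$). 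By additivity in the first argument, $\tilde F_u=\sum_{\mf\in S}\tilde F_{u_\mf}$, and each summand vanishes off the single weight space $V_{\mf^\ast}$; hence $\tilde F_u$ vanishes on $V_\nf$ for every $\nf\notin S^\ast:=\{\mf^\ast:\mf\in S\}$, and $S^\ast$ is finite. This completes the proof; the essential input is simply that every element of a weight module is a finite sum of weight vectors, together with Proposition \ref{prop:orthog}.
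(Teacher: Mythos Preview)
Your proof is correct and follows essentially the same approach as the paper: properties \eqref{eq:tildeFu1} and \eqref{eq:tildeFu2} are read off directly from \eqref{eq:adm1nyy}--\eqref{eq:adm3nyy}, and \eqref{eq:tildeFu3} is obtained by decomposing $u$ into finitely many weight components and applying Proposition~\ref{prop:orthog} to each. The paper's own proof is terser but identical in substance.
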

\begin{proof}
\eqref{eq:tildeFu1}, \eqref{eq:tildeFu2} follow from \eqref{eq:adm1nyy}-\eqref{eq:adm3nyy}.
For \eqref{eq:tildeFu3}, write $u=\sum_{i=1}^n u_i$, where $u_i\in V_{\mf_i}$.
Then if $\nf\in\om\backslash\{\mf_1^\ast,\ldots,\mf_n^\ast\}$ we get
\[\tilde F_u(V_\nf)=F(u_1,V_\nf)+\cdots+F(u_n,V_\nf)=0\]
by Proposition \ref{prop:orthog}.
\end{proof}

\begin{definition} Let $\om\in\Om$ and $V$ be a weight $A$-module with $\Supp(V)\subseteq\om$.
The \emph{finitistic dual} $V^\sharp$ of $V$ is the set of all maps
 $\ph:V\to R_\om$ satisfying the properties of Proposition \ref{prop:tildeFu}, i.e.
\begin{subequations}
\begin{align}
\label{eq:dual1}
& \ph (v_1+v_2)=\ph (v_1)+ \ph (v_2)&\forall v_1,v_2\in V,\\
\label{eq:dual2}
& \ph (rv)=r^\ast \ph (v)&\forall r\in R, v\in V,\\
\label{eq:dual3}
& \ph (V_\mf)=0\quad\text{for all but finitely many $\mf\in\om$.}
\end{align}
\end{subequations}
\end{definition}

\begin{proposition}
 $V^\sharp$ carries an $A$-module structure defined as follows.
Let $\ph\in V^\sharp$ and $r\in R$. Define $r\ph, X\ph, Y\ph:V\to R_\om$ by
\begin{subequations}
\begin{align}
\label{eq:dualmod1}
(r\ph)(v)&=\ph(r^\ast v)=r\ph(v),\\
\label{eq:dualmod2}
(X\ph)(v)&=\si\big(\ph(Yv)\big),\\
\label{eq:dualmod3}
(Y\ph)(v)&=\si^{-1}\big(\ph(Xv)\big),
\end{align}
\end{subequations}
for any $v\in V$.
\end{proposition}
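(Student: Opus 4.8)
The plan is to proceed in three stages. First I would check that for $\ph\in V^\sharp$ and $r\in R$ the maps $r\ph$, $X\ph$, $Y\ph$ defined by \eqref{eq:dualmod1}--\eqref{eq:dualmod3} again satisfy \eqref{eq:dual1}--\eqref{eq:dual3}, so that they lie in $V^\sharp$. Second I would note that these assignments are additive in $\ph$ and that $R$ acts on $V^\sharp$ through its own ring structure. Third I would verify that the operators so obtained satisfy the defining relations \eqref{eq:gwarels} of $A$; since $A$ is presented by the generating set $R\cup\{X,Y\}$ subject to those relations, this last step promotes the data to an honest $A$-module structure.

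For the first stage, additivity \eqref{eq:dual1} of $r\ph$, $X\ph$ and $Y\ph$ is immediate from additivity of $\ph$, of $\si^{\pm1}\colon R_\om\to R_\om$, and of the operators $r$, $X$, $Y$ on $V$. Property \eqref{eq:dual2} follows by combining the commutation rules of \eqref{eq:gwarels} with the commutativity of $R$ and with $\ast\si=\si\ast$; for instance
\[(X\ph)(rv)=\si\big(\ph(\si^{-1}(r)Yv)\big)=\si\big(\si^{-1}(r)^\ast\,\ph(Yv)\big)=r^\ast(X\ph)(v),\]
using $\si^{-1}(r)^\ast=\si^{-1}(r^\ast)$, and the cases of $r\ph$ and $Y\ph$ are analogous. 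For the finiteness clause \eqref{eq:dual3} the key point is that $X$ maps $V_\mf$ into $V_{\si(\mf)}$ and $Y$ maps $V_\mf$ into $V_{\si^{-1}(\mf)}$ (a direct consequence of $Xr=\si(r)X$ and $Yr=\si^{-1}(r)Y$), whence $(X\ph)(V_\mf)\subseteq\si\big(\ph(V_{\si^{-1}(\mf)})\big)$ and $(Y\ph)(V_\mf)\subseteq\si^{-1}\big(\ph(V_{\si(\mf)})\big)$ vanish for all but finitely many $\mf$, while $(r\ph)(V_\mf)\subseteq\ph(r^\ast V_\mf)\subseteq\ph(V_\mf)$ because $V_\mf$ is an $R$-submodule.

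The second stage is routine: additivity in $\ph$ is visible directly from \eqref{eq:dualmod1}--\eqref{eq:dualmod3}, and $\big(r(s\ph)\big)(v)=\ph(s^\ast r^\ast v)=\ph\big((rs)^\ast v\big)=\big((rs)\ph\big)(v)$ together with $1\ph=\ph$ shows that $R$ acts by ring homomorphisms. For the third stage I would compute each relation of \eqref{eq:gwarels} in turn. One gets $\big(Y(X\ph)\big)(v)=\si^{-1}\big(\si(\ph(YXv))\big)=\ph(tv)=t^\ast\ph(v)=t\ph(v)$ using $t^\ast=t$, and symmetrically $\big(X(Y\ph)\big)(v)=\ph(\si(t)v)=\si(t)\ph(v)$ using $\si(t)^\ast=\si(t)$. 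For the twisting relations, the identity $r^\ast Y=Y\si(r^\ast)$ yields
\[\big(X(r\ph)\big)(v)=\si\big(\ph(Y\si(r^\ast)v)\big)=(X\ph)\big(\si(r^\ast)v\big)=\si(r^\ast)^\ast(X\ph)(v)=\si(r)(X\ph)(v),\]
where I used that $X\ph$ itself satisfies \eqref{eq:dual2} and that $\si(r^\ast)^\ast=\si(r)$; hence $X\cdot r=\si(r)\cdot X$ on $V^\sharp$, and $Y\cdot r=\si^{-1}(r)\cdot Y$ comes out the same way.

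I do not expect any real obstacle; each verification is a short manipulation. The only point that needs a moment's care is the finiteness clause \eqref{eq:dual3} for $X\ph$ and $Y\ph$ --- which is exactly where compatibility of the $\Z$-grading with the weight-space decomposition enters --- together with keeping track of where each conjugation $\ast$ and each twist $\si^{\pm1}$ lands; this last bit of bookkeeping is precisely the reason the twists $\si$ and $\si^{-1}$ occur in \eqref{eq:dualmod2} and \eqref{eq:dualmod3}.
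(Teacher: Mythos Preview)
Your proposal is correct and follows essentially the same approach as the paper's own proof: first checking that $r\ph$, $X\ph$, $Y\ph$ lie in $V^\sharp$ (with the same computation for \eqref{eq:dual2}), then verifying the GWA relations \eqref{eq:gwarels} on $V^\sharp$ via the same short manipulations. Your extra remarks on additivity in $\ph$ and the $R$-action being a ring homomorphism are routine additions the paper leaves implicit.
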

\begin{proof}
First we must prove that $r\ph, X\ph, Y\ph\in V^\sharp$.
It is clear that $r\ph$ satisfies \eqref{eq:dual1},\eqref{eq:dual2},\eqref{eq:dual3}
since $\ph$ does.
Also $X\ph$ and $Y\ph$ satisfy \eqref{eq:dual1},\eqref{eq:dual3}.
We show \eqref{eq:dual2} for $X\ph$:
\begin{align*}
(X\ph)(rv)&\overset{\eqref{eq:dualmod2}}{=}\si\big(\ph(Yrv)\big)
=\si\big(\ph(\si^{-1}(r)Yv)\big)
\overset{\eqref{eq:dual2}}{=}\si\big(\si^{-1}(r)^\ast\big)\si\big(\ph(Yv)\big)=\\
&\overset{\eqref{eq:dualmod2}}{=}r^\ast(X\ph)(v).
\end{align*}
Analogously, $Y\ph$ satisfies \eqref{eq:dual2}.

We must also show that the relations in $A$ are preserved.
For any $\ph\in V^\sharp$ we have
\[
(YX\ph)(v)\overset{\eqref{eq:dualmod3}}{=}\si^{-1}\big((X\ph)(Xv)\big)
\overset{\eqref{eq:dualmod2}}{=}\ph(YXv)=\ph(tv)
\overset{\eqref{eq:dualmod1}}{=}(t\ph)(v) \quad\forall v\in V
\]
so $YX\ph=t\ph$.
Similarly, $XY\ph=\si(t)\ph$ for any $\ph\in V^\sharp$. Also, for
any $r\in R$ and $\ph\in V^\sharp$,
\begin{align*}
(Xr\ph)(v)&\overset{\eqref{eq:dualmod2}}{=}\si\big((r\ph)(Yv)\big)
\overset{\eqref{eq:dualmod1}}{=}\si\big(\ph(r^\ast Yv)\big)=
\si\big(\ph(Y\si(r^\ast) v)\big)=\\ &\overset{\eqref{eq:dualmod2}}{=}(X\ph)(\si(r)^\ast v)
\overset{\eqref{eq:dualmod1}}{=}\big(\si(r)X\ph)(v)\quad\forall v\in V.
\end{align*}
Analogously one proves that $Yr\ph=\si^{-1}(r)Y\ph$ for any $r\in R, \ph\in V^\sharp$.
Thus the relations of $A$ are preserved, so \eqref{eq:dualmod1}-\eqref{eq:dualmod3}
extends to an action of $A$ on $V^\sharp$.
\end{proof}

\begin{proposition}\label{prop:dualwtny}
 $V^\sharp$ is a weight $A$-module with
\begin{align}\label{eq:dualwtny}
(V^\sharp)_\mf&=
\big\{\ph\in V^\sharp\; :\;
\ph |_{V_\nf}=0 \text{ for all $\nf\in\om$ except possibly for $\nf=\mf^\ast$}\big\}\\
&=\big\{\ph\in V^\sharp\; :\; \ph(V)\subseteq R/\mf \big\}.
\end{align}
\end{proposition}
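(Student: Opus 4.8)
The plan is to establish the two displayed descriptions of $(V^\sharp)_\mf$ and then verify that $V^\sharp=\bigoplus_{\mf\in\om}(V^\sharp)_\mf$ as a weight module. First I would check that the two sets on the right-hand side of \eqref{eq:dualwtny} coincide. Since $R_\om=\bigoplus_{\nf\in\om}R/\nf$, a map $\ph$ satisfying $\ph|_{V_\nf}=0$ for all $\nf\neq\mf^\ast$ automatically has $\ph(V)=\ph(V_{\mf^\ast})$; by property \eqref{eq:dual2}, $\ph(V_{\mf^\ast})=\ph(r^\ast v)=r^\ast\ph(v)$ shows $\mf^\ast{}^\ast\cdot\ph(V_{\mf^\ast})=\mf\cdot\ph(V_{\mf^\ast})=0$, so $\ph(V)\subseteq(R_\om)[\mf]=R/\mf$. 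Conversely, if $\ph(V)\subseteq R/\mf$, then for $v\in V_\nf$ with $\nf\neq\mf^\ast$ we have, by \eqref{eq:dual2}, that $\nf^\ast\ph(v)=\ph(\nf^\ast v)$; but $\nf^\ast v$ need not vanish, so instead I would argue: $\ph(v)\in R/\mf$ and also (thinking of $\ph(v)$ as living in the $\nf$-component after applying \eqref{eq:dual2} with $r$ ranging over $\nf$) $\nf^\ast\ph(v)=\ph(\nf^\ast v)=\ph(0)$ only if... — this needs care, so let me instead use that $\ph(v)$ for $v\in V_\nf$ lies in $R/\nf^\ast$ by the same computation as in Proposition~\ref{prop:tildeFu}/\eqref{eq:dual2}: since $\nf v=0$ we get $\nf^\ast\ph(v)=\ph(\nf v)=0$, hence $\ph(V_\nf)\subseteq R/\nf^\ast$; combined with $\ph(V)\subseteq R/\mf$ and $R/\mf\cap R/\nf^\ast=0$ when $\nf^\ast\neq\mf$, this forces $\ph(V_\nf)=0$. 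So the two descriptions agree.

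Next I would show that this definition really makes $V^\sharp$ a weight module, i.e. that $(V^\sharp)_\mf=\{\ph:\mf\ph=0\}$ and that $V^\sharp=\bigoplus_\mf(V^\sharp)_\mf$. For the first, by \eqref{eq:dualmod1} we have $(r\ph)(v)=r\ph(v)$, so $\mf\ph=0$ iff $\mf\cdot\ph(v)=0$ for all $v$ iff $\ph(V)\subseteq(R_\om)[\mf]=R/\mf$, which is exactly the second description. For the direct sum decomposition, given $\ph\in V^\sharp$, property \eqref{eq:dual3} says $\ph$ vanishes on all but finitely many weight spaces, say it is supported on weights among $\mf_1^\ast,\dots,\mf_n^\ast$ (using that $\ph(V_\nf)\subseteq R/\nf^\ast$). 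Writing $\ph=\sum_i\ph_i$ where $\ph_i$ is $\ph$ composed with the projection $R_\om\twoheadrightarrow R/\mf_i$, one checks each $\ph_i$ lies in $V^\sharp$ (additivity and \eqref{eq:dual2} are inherited componentwise since $\ast$ and the $R$-action respect the grading of $R_\om$, and \eqref{eq:dual3} is clear), and $\ph_i$ has image in $R/\mf_i$, hence $\ph_i\in(V^\sharp)_{\mf_i^\ast}$; moreover the $\ph_i$ with distinct $\mf_i$ are linearly independent over the relevant rings since they have disjoint images. This gives $V^\sharp=\sum_\mf(V^\sharp)_\mf$ with the sum direct.

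Finally I would confirm compatibility of the grading with the $A$-action as a sanity check (though strictly only the weight-module structure is being asserted here): from \eqref{eq:dualmod2}, if $\ph\in(V^\sharp)_\mf$ then $(X\ph)(v)=\si(\ph(Yv))$; since $Yv\in V_{\si(\nf)}$ when $v\in V_\nf$, and $\ph$ is supported on weight $\mf^\ast$, $X\ph$ is supported where $\si(\nf)=\mf^\ast$, i.e. $\nf=\si^{-1}(\mf^\ast)=\si^{-1}(\mf)^\ast$, so $X\ph\in(V^\sharp)_{\si(\mf)}$, matching $\deg X=1$; similarly $Y\ph\in(V^\sharp)_{\si^{-1}(\mf)}$. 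The main obstacle I anticipate is the bookkeeping in the converse inclusion for the first description — correctly pinning down that $\ph(V_\nf)\subseteq R/\nf^\ast$ (not $R/\nf$) via \eqref{eq:dual2} and then intersecting with $R/\mf$ — together with being careful that the ``finitely many'' in \eqref{eq:dual3} interacts well with the reindexing by $\ast$; everything else is routine once the components of $R_\om$ are tracked consistently.
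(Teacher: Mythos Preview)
Your proposal is correct and follows essentially the same route as the paper: identify $(V^\sharp)_\mf$ with both descriptions via $\mf\ph=0\Leftrightarrow\ph(V)\subseteq R/\mf$ and the observation $\ph(V_\nf)\subseteq R/\nf^\ast$, then decompose an arbitrary $\ph$ as a finite sum of its projections $\pi_\mf\circ\ph$. One small slip: when you write $\ph_i\in(V^\sharp)_{\mf_i^\ast}$, this should be $(V^\sharp)_{\mf_i}$, since $\ph_i$ has image in $R/\mf_i$ and the second description gives weight $\mf_i$, not $\mf_i^\ast$; the rest is fine, and the final sanity check on $X\ph,Y\ph$ is extra but harmless.
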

\begin{proof} Let $\ph\in V^\sharp$. Then
 $\mf \varphi = 0
\Leftrightarrow \varphi(\mf^\ast v)=0\;\; \forall v\in V
\Leftrightarrow \varphi |_{V_\nf}=0$ for all $\nf\in\om$ except possibly for $\nf=\mf^\ast$,
proving \eqref{eq:dualwtny}. The second equality holds since
 $\mf\ph=0 \Leftrightarrow \mf\ph(V)=0
\Leftrightarrow \ph(V)\subseteq ( R_\om)_\mf=R/\mf$. Since any $\varphi$ is
the sum of its projections $\varphi_\mf=\pi_\mf\circ\varphi$, where
$\pi_\mf:R_\om\to R/\mf$, $V^\sharp$ is a weight module.
\end{proof}

\begin{proposition}\label{prop:supp}
Let $\om\in\Om$ and let $V$ be a weight $A$-module with $\Supp(V)\subseteq\om$.
Then $\Supp(V^\sharp)=\Supp(V)^\ast=\big\{\mf^\ast\; :\; \mf\in \Supp(V)\big\}$.
\end{proposition}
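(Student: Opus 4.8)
The plan is to prove the two equalities in the statement separately, starting from the characterization of weight spaces of $V^\sharp$ given in Proposition \ref{prop:dualwtny}. For the first equality, I would show $\mf\in\Supp(V^\sharp)$ iff $\mf^\ast\in\Supp(V)$. By Proposition \ref{prop:dualwtny}, $(V^\sharp)_\mf$ consists exactly of those $\ph\in V^\sharp$ with $\ph(V)\subseteq R/\mf$, equivalently those $\ph$ vanishing on $V_\nf$ for every $\nf\in\om$ with $\nf\neq\mf^\ast$. So I must check: there is a nonzero such $\ph$ if and only if $V_{\mf^\ast}\neq 0$. The ``only if'' direction is immediate — if $V_{\mf^\ast}=0$ then any such $\ph$ vanishes on every weight space, hence on all of $V$ (since $V$ is a weight module and $\ph$ is additive), so $\ph=0$. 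The ``if'' direction requires producing, from a nonzero $v_0\in V_{\mf^\ast}$, a nonzero map $\ph\in V^\sharp$ supported on $V_{\mf^\ast}$.

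For that construction I would fix a nonzero weight vector $v_0\in V_{\mf^\ast}$ and use that $V_{\mf^\ast}$ is a module over the field $R/\mf^\ast$; pick an $R/\mf^\ast$-linear functional $\chi:V_{\mf^\ast}\to R/\mf^\ast$ with $\chi(v_0)\neq 0$ (possible by a Zorn's lemma / basis-extension argument, since $v_0$ spans a one-dimensional subspace over the field $R/\mf^\ast$). Then define $\ph:V\to R_\om$ by setting $\ph|_{V_\nf}=0$ for $\nf\neq\mf^\ast$ and $\ph|_{V_{\mf^\ast}}$ equal to the composite of $\chi$ with the canonical inclusion $R/\mf^\ast\hookrightarrow R/\mf^\ast\subseteq R_\om$, followed by applying the conjugation/identification so that the output lands in the $R/\mf$-component as required — more precisely, one wants $\ph(V)\subseteq R/\mf=(R_\om)_\mf$, so I compose $\chi$ with the map $R/\mf^\ast\to R/\mf$ induced by $\ast$ (recall $\ast$ induces $R/\mf\to R/\mf^\ast$, so its inverse gives $R/\mf^\ast\to R/\mf$). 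One then verifies \eqref{eq:dual1}--\eqref{eq:dual3}: additivity is clear, \eqref{eq:dual3} holds because $\ph$ is supported on the single weight space $V_{\mf^\ast}$, and the semilinearity \eqref{eq:dual2} $\ph(rv)=r^\ast\ph(v)$ follows because on $V_{\mf^\ast}$ the action of $r$ is multiplication by $r+\mf^\ast$, while $\chi$ is $R/\mf^\ast$-linear and the identification $R/\mf^\ast\to R/\mf$ intertwines multiplication by $r+\mf^\ast$ with multiplication by $r^\ast+\mf$; here one uses $(\mf^\ast)^\ast=\mf$. This $\ph$ is nonzero since $\ph(v_0)\neq 0$, and by Proposition \ref{prop:dualwtny} it lies in $(V^\sharp)_\mf$, so $\mf\in\Supp(V^\sharp)$.

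The second equality, $\Supp(V)^\ast=\{\mf^\ast:\mf\in\Supp(V)\}$, is just the definition of the set on the right together with the observation that $\ast$ is an involution, so no real argument is needed there; I would state it in one line. The main obstacle is the ``if'' direction of the first equality, i.e. the existence of a nonzero finitistic functional supported on a prescribed nonzero weight space: it rests on having enough linear functionals on the $R/\mf^\ast$-vector space $V_{\mf^\ast}$, which is unproblematic but should be stated carefully (a one-dimensional subspace over a field is always a direct summand, so the functional extends), and on correctly tracking the various induced maps between the residue fields $R/\mf$ and $R/\mf^\ast$ so that \eqref{eq:dual2} comes out with $r^\ast$ rather than $r$. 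Everything else is bookkeeping with Propositions \ref{prop:orthog} and \ref{prop:dualwtny}.
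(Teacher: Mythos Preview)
Your proposal is correct and follows essentially the same route as the paper's proof: both directions rest on Proposition~\ref{prop:dualwtny}, and the nontrivial ``if'' direction is handled by extending a nonzero weight vector to an $R/\mf^\ast$-basis of $V_{\mf^\ast}$ and defining $\ph$ to be the corresponding coordinate functional, landing in $R/\mf$. Your treatment is somewhat more explicit about the conjugation map $R/\mf^\ast\to R/\mf$ and the verification of~\eqref{eq:dual2}; note that since $\ast$ has order two, this map is simply $r+\mf^\ast\mapsto r^\ast+\mf$, so no inverse is needed. (Proposition~\ref{prop:orthog} plays no role here and can be omitted from your closing remark.)
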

\begin{proof}
Assume $\mf\in\Supp(V^\sharp)$ and let $0\neq\varphi\in (V^\sharp)_{\mf}$.
Then, by \eqref{eq:dualwtny}, $\varphi(v)\neq 0$ for some $v\in V_{\mf^\ast}$.
This implies that $\mf^\ast\in\Supp(V)$, i.e. $\mf\in\Supp(V)^\ast$.
Conversely, if $\mf\in\Supp(V)^\ast$ and
 $0\neq v\in V_{\mf^\ast}$ we can extend $v$ to an $R/\mf^\ast$-basis of $V_{\mf^\ast}$
and define $\ph\in V^\sharp$ by requiring that $\ph(V_\nf)=0$, $\nf\neq\mf^\ast$,
 $\ph(v)=1+\mf$ and $\ph(w)=0$ for all other basis vectors $w$ in $V_{\mf^\ast}$.
Then, by \eqref{eq:dualwtny}, $\ph\in (V^\sharp)_\mf$ so that $\mf\in\Supp(V^\sharp)$.
\end{proof}

\begin{proposition}\label{eq:duality}
If $\dim_{R/\mf} V_{\mf}<\infty$ for all $\mf\in\Supp(V)$ then the natural
inclusion of $V$ into $V^{\sharp\sharp}$ is an $A$-module isomorphism.
\end{proposition}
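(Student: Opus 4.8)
The plan is to write down the canonical map explicitly, check that it is an injective, grading-preserving $A$-module homomorphism, and then reduce the surjectivity statement to reflexivity of finite-dimensional vector spaces, one weight space at a time.

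First I would define $\io\colon V\to V^{\sharp\sharp}$ by $\io(v)(\ph)=\overline{\ph(v)}$ for $v\in V$ and $\ph\in V^\sharp$, using the conjugation $\overline{\phantom{x}}\colon R_\om\to R_{\om^\ast}$ (note that $V^\sharp$ has support in $\om^\ast$ by Proposition~\ref{prop:supp}, so its finitistic dual $V^{\sharp\sharp}$ is built from $R_{\om^\ast}$ and lands back over $\om$; the conjugation is forced on us precisely because $(r\ph)(v)=r\ph(v)$ would otherwise clash with \eqref{eq:dual2} for $V^\sharp$). That $\io(v)\in V^{\sharp\sharp}$ is a short verification: additivity in $\ph$ is immediate; $\io(v)(r\ph)=r^\ast\io(v)(\ph)$ follows from $\overline{r\la}=r^\ast\overline{\la}$ together with $(r\ph)(v)=r\ph(v)$; and $\io(v)$ vanishes on $(V^\sharp)_\nf$ whenever none of the finitely many weight components of $v$ has weight $\nf^\ast$, which one reads off from the description of $(V^\sharp)_\nf$ in Proposition~\ref{prop:dualwtny}. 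That $\io$ is $A$-linear I would check on the generators $r,X,Y$ by feeding \eqref{eq:dualmod1}--\eqref{eq:dualmod3} into each other (once for $V^\sharp$, once for $V^{\sharp\sharp}$); the only ingredient is that $\si$ and $\ast$, hence $\si$ and conjugation, commute. Clearly $\io(V_\mf)\subseteq(V^{\sharp\sharp})_\mf$. Injectivity of $\io$ does not use the hypothesis and is the argument already used in the proof of Proposition~\ref{prop:supp}: given $0\neq v$ with a nonzero component $v_\mf\in V_\mf$, extend $v_\mf$ to an $R/\mf$-basis of $V_\mf$ and take $\ph\in(V^\sharp)_{\mf^\ast}$ dual to $v_\mf$ and vanishing on the other weight spaces; then $\io(v)(\ph)=\overline{\ph(v_\mf)}\neq0$.

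It remains to show that $\io_\mf\colon V_\mf\to(V^{\sharp\sharp})_\mf$ is surjective for every $\mf$. If $\mf\notin\Supp(V)$, both spaces vanish by Proposition~\ref{prop:supp}, so assume $\mf\in\Supp(V)$. By Proposition~\ref{prop:dualwtny}, restriction to $V_\mf$ identifies $(V^\sharp)_{\mf^\ast}$ with the space of additive, $\ast$-semilinear maps $V_\mf\to R/\mf^\ast$, and composing with the field isomorphism $\ast\colon R/\mf^\ast\to R/\mf$ identifies this, as an $R/\mf$-vector space, with the ordinary dual $\Hom_{R/\mf}(V_\mf,R/\mf)$. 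Applying the same identification once more identifies $(V^{\sharp\sharp})_\mf$ with the double dual of $V_\mf$, and a direct check shows that under these identifications $\io_\mf$ becomes the canonical evaluation map $V_\mf\to V_\mf^{\ast\ast}$. Since $\dim_{R/\mf}V_\mf<\infty$, that map is bijective, and summing over $\mf$ proves that $\io$ is an $A$-module isomorphism.

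The genuine content sits in the last paragraph: the main thing to get right is the bookkeeping with the conjugation and the shift $\mf\leftrightarrow\mf^\ast$, so that iterating the dualization produces the \emph{canonical} evaluation map rather than its composite with some semilinear automorphism of $V_\mf^{\ast\ast}$. Everything before that is formal, and the dimension hypothesis is used --- and is genuinely necessary --- only at the final step, since for infinite-dimensional $V_\mf$ the evaluation into the double dual fails to be surjective.
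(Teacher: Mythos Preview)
Your argument is correct and follows the same template as the paper's proof: define the evaluation map, verify $A$-linearity on generators, prove injectivity by producing a dual vector, and deduce surjectivity from a weight-by-weight dimension count. Your treatment is in fact more careful than the paper's: the paper writes $\Psi(v)(\varphi)=\varphi(v)$ without the conjugation, which as you observe does not literally satisfy \eqref{eq:dual2} for $V^\sharp$ (one gets $\Psi(v)(r\varphi)=r\,\varphi(v)$ instead of $r^\ast\varphi(v)$) and also takes values in $R_\om$ rather than $R_{\om^\ast}$; your insertion of $\overline{\phantom{x}}$ fixes both issues, and your remark that $\si$ commutes with conjugation is exactly what makes the $X$- and $Y$-linearity go through. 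The paper compresses surjectivity to ``by considering dual bases, $\dim V_\mf=\dim(V^\sharp)_\mf$'' (tacitly $\mf^\ast$ on the right), whereas you unwind it to the reflexivity of finite-dimensional vector spaces; these are the same argument at different levels of detail.
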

\begin{proof}
Define $\Psi:V\to V^{\sharp\sharp}$ by $\Psi(v)(\varphi)=\varphi(v)$ for $v\in V$,
 $\varphi\in V^{\sharp}$. Then
\[\Psi(Xv)(\ph)=\ph(Xv)\overset{\eqref{eq:dualmod3}}{=}\si\big((Y\ph)(v)\big)=
\si\big(\Psi(v)(Y\ph)\big)\overset{\eqref{eq:dualmod2}}{=}(X\Psi(v))(\ph)\]
for any $v\in V, \ph\in V^\sharp$. Similarly,
 $\Psi(Yv)=Y\Psi(v)$ and $\Psi(rv)=r\Psi(v)$ for any $r\in R$,
proving that $\Psi$ is an $A$-module homomorphism.
Let $v\in V$, $v\neq 0$ and write $v$ as a finite sum of weight vectors $v_\mf\neq 0$.
Then there exists $\varphi\in (V^\sharp)_{\mf^\ast}$ such that $\varphi(v)\neq 0$, i.e.
 $\Psi(v)(\varphi)\neq 0$ so $\Psi(v)\neq 0$. Thus $\Psi$ is injective. Also, by considering
dual bases, $\dim V_\mf=\dim (V^\sharp)_\mf$. Since $\Psi(V_\mf)\subseteq(V^{\sharp\sharp})_\mf$
we conclude that $\Psi$ is an isomorphism.
\end{proof}

Let $\om\in\Om$.
If $\Psi :V\to W$ is a homomorphism of weight $A$-modules with support in $\om$,
we define $\Psi^\sharp:W^\sharp\to V^\sharp$ by
\begin{equation}\label{eq:psisharp}
 \big(\Psi^\sharp(\ph)\big)(v)=\ph\big(\Psi(v)\big)\quad\forall v\in V,\forall \ph\in W^\sharp
 \end{equation}
\begin{proposition}
 $\Psi^\sharp$ is also an $A$-module homomorhpism. Moreover, $\sharp$ is a contravariant
 endofunctor on the category of weight $A$-modules with support in $\om$ or $\om^*$.
\end{proposition}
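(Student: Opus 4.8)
The plan is to verify first that $\Psi^\sharp$ is an $A$-module homomorphism, and then to check the two functoriality axioms: that $\sharp$ respects composition (contravariantly) and identities.

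First I would confirm that $\Psi^\sharp(\ph)$ actually lies in $V^\sharp$ when $\ph\in W^\sharp$. Additivity and the $R$-semilinearity \eqref{eq:dual2} are inherited from $\ph$ and the additivity/$R$-linearity of $\Psi$; the support-finiteness \eqref{eq:dual3} follows since $\Psi(V_\mf)\subseteq W_\mf$, so if $\ph|_{W_\mf}=0$ then $\big(\Psi^\sharp(\ph)\big)|_{V_\mf}=0$, and $\ph$ vanishes on all but finitely many weight spaces. Next, to see $\Psi^\sharp$ is $A$-linear, I would compute $\Psi^\sharp(X\ph)$ and $X\Psi^\sharp(\ph)$ on an arbitrary $v\in V$. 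Using \eqref{eq:dualmod2} and the fact that $\Psi$ commutes with the $X$-action,
\[
\big(\Psi^\sharp(X\ph)\big)(v)=(X\ph)\big(\Psi(v)\big)=\si\big(\ph(Y\Psi(v))\big)
=\si\big(\ph(\Psi(Yv))\big)=\si\Big(\big(\Psi^\sharp(\ph)\big)(Yv)\Big)=\big(X\Psi^\sharp(\ph)\big)(v).
\]
The computations for $Y$ (via \eqref{eq:dualmod3}) and for $r\in R$ (via \eqref{eq:dualmod1}) are entirely analogous, so $\Psi^\sharp$ is an $A$-module homomorphism.

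For functoriality, given $\Psi:V\to W$ and $\Phi:W\to U$ homomorphisms of weight $A$-modules with support in $\om$, I would check $(\Phi\circ\Psi)^\sharp=\Psi^\sharp\circ\Phi^\sharp$ by evaluating both sides at $\ph\in U^\sharp$ and $v\in V$: the left side is $\ph\big(\Phi(\Psi(v))\big)$, and the right side is $\big(\Phi^\sharp(\ph)\big)\big(\Psi(v)\big)=\ph\big(\Phi(\Psi(v))\big)$, so they agree. Similarly $(\Id_V)^\sharp=\Id_{V^\sharp}$ is immediate from \eqref{eq:psisharp}. One subtlety worth a remark is the phrase ``with support in $\om$ or $\om^*$'': since $\Supp(V^\sharp)=\Supp(V)^\ast$ by Proposition \ref{prop:supp}, the functor $\sharp$ sends modules supported in $\om$ to modules supported in $\om^\ast$ and vice versa, so it is genuinely an endofunctor only on the union of these two subcategories; in the real (or more generally symmetric) case $\om^\ast\subseteq\om$ and this is just the category of weight modules supported in $\om$.

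I do not expect any real obstacle here: every step is a direct unwinding of the definitions \eqref{eq:dualmod1}--\eqref{eq:dualmod3} and \eqref{eq:psisharp}, together with the already-established fact that module homomorphisms preserve weight spaces. The only point requiring a moment's care is bookkeeping the orbit on which each dual lives, which is handled by Proposition \ref{prop:supp}; the algebraic identities themselves are forced.
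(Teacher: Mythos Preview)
Your proposal is correct and follows essentially the same approach as the paper: a direct verification using the definitions \eqref{eq:dualmod1}--\eqref{eq:dualmod3} and \eqref{eq:psisharp}. The paper's proof writes out the $r$-case explicitly and leaves $X$, $Y$, and functoriality to the reader, whereas you write out the $X$-case and the functoriality axioms and also include the (tacit in the paper) check that $\Psi^\sharp(\ph)\in V^\sharp$; these are cosmetic differences only.
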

\begin{proof}
For any $v\in V, \ph\in W^\sharp, r\in R$, we have
\begin{align*}
\big(\Psi^\sharp(r\ph)\big)(v)&=(r\ph)\big(\Psi(v)\big)&\text{by definition of $\Psi^\sharp$}\\
&=\ph\big(r^\ast\Psi(v)\big)&\text{by $A$-module structure on $W^\sharp$}\\
&=\ph\big(\Psi(r^\ast v)\big)&\text{since $\Psi$ is an $A$-module morphism}\\
&=\big(\Psi^\sharp(\ph)\big)(r^\ast v)&\text{by definition of $\Psi^\sharp$}\\
&=\big( r\Psi^\sharp(\ph)\big)(v)&\text{by $A$-module structure on $V^\sharp$}
\end{align*}
In the same way one shows
that $\Psi^\sharp$ commutes with the actions of $X$ and $Y$. That $\sharp$ is
a functor is easy to check.
\end{proof}

\subsection{The bijection between forms and morphisms} 
Let $\om\in\Om$ and $V$ be a weight $A$-module with $\Supp(V)\subseteq\om$.
Assume $F$ is an admissible form on $V$. For $u\in V$,
recall that $\tilde F_u\in V^\sharp$ by Proposition \ref{prop:tildeFu}.
\begin{proposition}
The map $\tilde F :V\to V^\sharp$ defined by $u\mapsto \tilde F_u$ is
an $A$-module homomorphism.
\end{proposition}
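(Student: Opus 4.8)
The plan is to show that $\tilde F$ intertwines the $A$-actions on $V$ and $V^\sharp$, using only the defining properties of an admissible form (Definition \ref{dfn:admiss}) and the formulas \eqref{eq:dualmod1}--\eqref{eq:dualmod3} for the $A$-module structure on $V^\sharp$. First I would note that additivity of $\tilde F$ in $u$, i.e. $\tilde F_{u_1+u_2}=\tilde F_{u_1}+\tilde F_{u_2}$, is immediate from additivity of $F$ in its first argument \eqref{eq:adm1nyy}. It then remains to check compatibility with the generators $r\in R$, $X$, and $Y$.

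For $r\in R$, I would compute $\tilde F_{ru}(v)=F(ru,v)=rF(u,v)=r\tilde F_u(v)$ using \eqref{eq:adm2nyy}, and compare with $(r\tilde F_u)(v)=r\tilde F_u(v)$ from \eqref{eq:dualmod1}; these agree, so $\tilde F_{ru}=r\tilde F_u$. For $X$, the key computation is
\[
\tilde F_{Xu}(v)=F(Xu,v)=\si\big(F(u,X^\ast v)\big)=\si\big(F(u,Yv)\big)=\si\big(\tilde F_u(Yv)\big),
\]
where the second equality is \eqref{eq:adm3nyy} with $a=X$ (so $\deg a=1$), using the remark after Definition \ref{dfn:pu} in the equivalent form $F(av,w)=\si^{\deg a}(F(v,a^\ast w))$, and $X^\ast=Y$. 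By \eqref{eq:dualmod2} the right-hand side is exactly $(X\tilde F_u)(v)$, so $\tilde F_{Xu}=X\tilde F_u$. The computation for $Y$ is entirely analogous: $\tilde F_{Yu}(v)=F(Yu,v)=\si^{-1}\big(F(u,Xv)\big)=\si^{-1}\big(\tilde F_u(Xv)\big)=(Y\tilde F_u)(v)$ by \eqref{eq:dualmod3}, using $Y^\ast=X$ and $\deg Y=-1$. Since $R\cup\{X,Y\}$ generates $A$, this suffices.

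I do not anticipate a genuine obstacle here; the statement is essentially bookkeeping once the module structure on $V^\sharp$ is in place. The only point requiring a small amount of care is making sure the degree exponents in \eqref{eq:adm3nyy} are applied in the correct direction — that is, using the "$F(v,aw)=\si^{\deg a}(F(a^\ast v,w))$" form of the axiom rather than the literal one — and tracking that $\si$ appears for $X$ while $\si^{-1}$ appears for $Y$, matching \eqref{eq:dualmod2}--\eqref{eq:dualmod3} exactly. One should also remark, for completeness, that $\tilde F_u$ indeed lies in $V^\sharp$ for each $u$, but this is already Proposition \ref{prop:tildeFu}, so it can simply be cited.
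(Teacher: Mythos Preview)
Your proof is correct and essentially identical to the paper's: both check compatibility with the generators $r\in R$, $X$, $Y$ by unwinding the definitions of $F$ and of the $A$-action on $V^\sharp$. One cosmetic remark: your identity $F(Xu,v)=\si(F(u,Yv))$ is a direct application of \eqref{eq:adm3nyy} itself (with $a=X$), not of the equivalent reformulation in the remark after Definition~\ref{dfn:pu}; your final paragraph has this backwards, but the computation is unaffected.
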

\begin{proof} For any $r\in R, u,v\in V$ we have
\[
\tilde F_{ru}(v)= F(ru,v)=F(u,r^\ast v)=\tilde F_u(r^\ast v)=(r\tilde F_u)(v)\]
and
\[\tilde F_{Xu}(v)=F(Xu,v)=\si\big(F(u,Yv)\big)=\si\big(\tilde F_u(Yv)\big)=(X\tilde F_u)(v).\]
Similarly, $\tilde F_{Yu}=Y\tilde F_u$ for any $u\in V$.
Thus $\tilde F$ is an $A$-module homomorphism.
\end{proof}
The following proposition is analogous the corresponding result
proved in \cite{MT} for finite-dimensional modules over algebras.
\begin{proposition} \label{prop:bij}
The map $F\mapsto\tilde F$ is an isomorphism of abelian groups between
the space of admissible
forms on $V$ and $\Hom_A(V,V^\sharp)$. Moreover, non-degenerate forms correspond to
isomorphisms.
\end{proposition}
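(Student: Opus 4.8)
The plan is to exhibit an explicit inverse to the map $F \mapsto \tilde F$ and check that both directions are additive and compatible with non-degeneracy. First I would verify that $F \mapsto \tilde F$ is additive: this is immediate from the additivity of $F$ in its first argument, since $(F_1 + F_2)^{\widetilde{\phantom{F}}}_u(v) = F_1(u,v) + F_2(u,v) = (\tilde F_1)_u(v) + (\tilde F_2)_u(v)$. Next, given $\psi \in \Hom_A(V, V^\sharp)$, I would define a form $F^\psi$ on $V$ by $F^\psi(u,v) = \psi(u)(v)$ and check that it is an admissible form. Properties \eqref{eq:adm1nyy} and \eqref{eq:adm2nyy} follow because $\psi$ is additive and $R$-linear and because of the defining relation \eqref{eq:dualmod1} for the $R$-action on $V^\sharp$, namely $(r\varphi)(v) = \varphi(r^\ast v) = r\varphi(v)$, which gives $F^\psi(ru,v) = \psi(ru)(v) = (r\psi(u))(v) = r\,\psi(u)(v) = r F^\psi(u,v)$. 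For \eqref{eq:adm3nyy} it suffices to check $a \in \{X, Y\}$: since $\psi(Xu) = X\psi(u)$, formula \eqref{eq:dualmod2} yields $F^\psi(Xu,v) = (X\psi(u))(v) = \si\big(\psi(u)(Yv)\big) = \si\big(F^\psi(u, Yv)\big) = \si^{\deg X}\big(F^\psi(u, X^\ast v)\big)$, and the case $a = Y$ is symmetric using \eqref{eq:dualmod3}; the general homogeneous case follows by writing $a$ as a product of an element of $R$ with a power of $X$ or $Y$ and composing.

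Then I would check that these two constructions are mutually inverse: $(F^{\tilde F})(u,v) = \tilde F_u(v) = F(u,v)$ on one side, and $\widetilde{(F^\psi)}_u(v) = F^\psi(u,v) = \psi(u)(v)$, so $\widetilde{(F^\psi)}_u = \psi(u)$ for all $u$, i.e. $\widetilde{F^\psi} = \psi$ on the other. Both maps are visibly additive, so this establishes the isomorphism of abelian groups.

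Finally, for the non-degeneracy statement, I would unwind the definitions. The map $\tilde F: V \to V^\sharp$ has kernel $\{u \in V : F(u,v) = 0 \ \forall v\}$, which is zero precisely when the right radical of $F$ is trivial. For surjectivity one uses Proposition \ref{prop:supp} and the finiteness built into $V^\sharp$: given $\varphi \in (V^\sharp)_{\mf^\ast}$, it is supported on $V_\mf$, and the condition $F(w_1, v) \neq 0$ for suitable $w_1$ whenever $v \neq 0$ (left non-degeneracy) together with orthogonality of weight spaces (Proposition \ref{prop:orthog}) lets one solve $\tilde F_{w_1} = \varphi$ weight space by weight space. Conversely an isomorphism $\tilde F$ forces $F$ to have trivial left radical (injectivity) and, via surjectivity onto $V^\sharp$ together with the fact that elements of $V^\sharp$ separate points of $V$ (again Proposition \ref{prop:supp}), trivial right radical as well. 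I expect the main obstacle to be precisely this last point: matching up "$\tilde F$ is an isomorphism" with the two-sided non-degeneracy condition in Definition \ref{dfn:admiss}, since one must be careful that surjectivity of $\tilde F$ gives the second (right-hand) non-degeneracy condition rather than merely the first — this is where the separation property of $V^\sharp$ coming from Proposition \ref{prop:supp} and the description \eqref{eq:dualwtny} of its weight spaces is essential, and one may need the hypothesis that weight spaces behave well (as in Proposition \ref{eq:duality}) or argue directly that a weight vector $v \neq 0$ admits some $\varphi \in V^\sharp$ with $\varphi(v) \neq 0$.
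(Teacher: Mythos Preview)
Your approach is essentially the same as the paper's: define the inverse map $\psi \mapsto F^\psi$ by $F^\psi(u,v)=\psi(u)(v)$, verify it gives an admissible form, check the two constructions are mutual inverses, and then treat the non-degeneracy statement separately. The paper's proof is much terser than yours --- it simply asserts that $\hat\Phi$ is admissible and that the maps are inverse to each other, and for the last claim writes only: ``If $\hat\Phi(v,w)=0\;\forall w$ implies $v=0$, then $\Phi$ is injective. If $\hat\Phi(v,w)=0\;\forall v$ implies $w=0$, then $\Phi$ is surjective.''

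You are right to flag the surjectivity step as the delicate point. The paper does not justify it, and in fact it does need exactly what you suspect: working weight space by weight space, right non-degeneracy says the image of $\tilde F|_{V_\mf}$ inside $(V^\sharp)_\mf \cong (V_{\mf^\ast})^\vee$ separates points of $V_{\mf^\ast}$, and one then needs $\dim_{R/\mf^\ast} V_{\mf^\ast} < \infty$ to conclude that this image is all of $(V^\sharp)_\mf$. The paper tacitly relies on this (consistent with the standing hypotheses elsewhere, e.g.\ Proposition~\ref{eq:duality}), and your proposal makes the dependence explicit. Likewise your remark that the converse direction (isomorphism $\Rightarrow$ non-degenerate) uses the separation property of $V^\sharp$ from the proof of Proposition~\ref{prop:supp} is correct; the paper does not spell out this direction at all.
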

\begin{proof}
Given $\Phi\in\Hom_A(V,V^\sharp)$, define $\hat\Phi:V\times V\to R$ by
 $\hat\Phi(v,w)=\Phi(v)(w)$. Then $\hat\Phi$ is an admissible form on $V$ and the maps
 $F\mapsto\tilde F$ and $\Phi\mapsto\hat\Phi$ are inverses to each other. If
 $\hat\Phi(v,w)=0\; \forall w$ implies that $v=0$, then $\Phi$ is injective. If
 $\hat\Phi(v,w)=0\; \forall v$ implies that $w=0$, then $\Phi$ is surjective.
This proves the last claim.
\end{proof}

\subsection{A semi-simplicity condition}
\begin{proposition}\label{prop:semisimple}
Let $V$ be a weight $A$-module, with $\Supp(V)$
contained in a real orbit, such that
 $\dim_{R/\mf} V_\mf=1\;\forall\mf\in\Supp(V)$.
If $V^\sharp\simeq V$ then $V$ is semi-simple.
\end{proposition}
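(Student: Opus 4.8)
The plan is to show that for a weight module $V$ with one-dimensional weight spaces over a real orbit, the condition $V^\sharp\simeq V$ forces the action of $X$ and $Y$ to be ``as nondegenerate as possible'' on the support, which in turn forces $V$ to decompose as a direct sum of simple modules. The key structural fact to exploit is that for such a module, simplicity (and semi-simplicity) is controlled entirely by whether the maps $X:V_\mf\to V_{\si(\mf)}$ and $Y:V_{\si(\mf)}\to V_\mf$ are zero or not, for consecutive weights $\mf,\si(\mf)$ in the support. A ``break'' in the module, i.e. a place where both $X$ and $Y$ vanish between two adjacent weight spaces, cuts $V$ into a direct sum; and within a segment with no breaks, the module is either simple or has a unique maximal/minimal submodule coming from a one-sided break (where exactly one of $X,Y$ vanishes). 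So the real content is: \emph{$V^\sharp\simeq V$ rules out one-sided breaks}.

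First I would set up the combinatorics. Since the orbit is real, $\mf^\ast=\mf$ for all $\mf\in\om$, so by Proposition \ref{prop:supp}, $\Supp(V^\sharp)=\Supp(V)$, and by Corollary \ref{cor:orthog2} the weight spaces are pairwise orthogonal under any admissible form. Fix an isomorphism $\Phi:V\xrightarrow{\sim} V^\sharp$; by Proposition \ref{prop:bij} this is the same as a non-degenerate admissible form $F$ on $V$. Because $\dim_{R/\mf}V_\mf=1$ and weight spaces are orthogonal, $F$ restricts to a non-degenerate pairing $V_\mf\times V_\mf\to R/\mf$ for each $\mf\in\Supp(V)$; pick a basis vector $v_\mf$ of each $V_\mf$ and write $X v_\mf=\la_\mf v_{\si(\mf)}$ (interpreted as $0$ if $\si(\mf)\notin\Supp(V)$) and $Y v_{\si(\mf)}=\mu_\mf v_\mf$ similarly. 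The admissibility relation \eqref{eq:adm3nyy} applied to $X$ then reads $F(Xv_\mf,v_{\si(\mf)})=\si\bigl(F(v_\mf,Yv_{\si(\mf)})\bigr)$, which in terms of the scalars $c_\mf:=F(v_\mf,v_\mf)\in (R/\mf)^\times$ becomes a relation of the form $\la_\mf c_{\si(\mf)}=\si(\mu_\mf c_\mf)$ up to the identification coming from $\si:R/\mf\to R/\si(\mf)$. The upshot I want to extract: $\la_\mf$ and $\mu_\mf$ are simultaneously zero or simultaneously nonzero.

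Granting that, the argument concludes as follows. Partition $\Supp(V)$ into maximal ``runs'': consecutive weights $\mf,\si(\mf),\si^2(\mf),\dots$ along which every $\la_{\si^i(\mf)}$ (equivalently every $\mu_{\si^i(\mf)}$) is nonzero, separated by ``breaks'' where $\la=\mu=0$. Each break genuinely splits off a direct summand: if $\la_\mf=\mu_\mf=0$ then the span of $\{v_\nf:\nf$ on one side$\}$ and the span of $\{v_\nf:\nf$ on the other side$\}$ are each $A$-submodules, and $V$ is their direct sum (one must also handle the case of an infinite support, where a run may be a bi-infinite, forward-infinite, or finite string). So it suffices to treat a single run $W$. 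On a run, all the structure constants are units, so $X$ and $Y$ act bijectively between consecutive weight spaces; any nonzero submodule $U\subseteq W$, being a weight module, contains some $v_\nf$, and then applying the invertible operators $X,Y$ repeatedly shows $U$ contains every $v_\mf$ with $\mf$ in the run, hence $U=W$. Therefore each run is simple, and $V$ is a direct sum of simples, i.e. semi-simple.

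The main obstacle I anticipate is the bookkeeping in the step ``$\la_\mf\neq 0\iff\mu_\mf\neq 0$'': one has to chase the relation \eqref{eq:adm3nyy} carefully through the isomorphisms $\si_{\mf,\nf}:R/\mf\to R/\nf$ and confirm that the scalars $c_\mf$ are indeed \emph{units} (this is exactly non-degeneracy of $F$ on each line $V_\mf$, using $\dim_{R/\mf}V_\mf=1$), so that $\la_\mf c_{\si(\mf)}=0$ really does force $\mu_\mf=0$. A secondary nuisance is making the ``break splits off a summand'' claim airtight when $\si^n(\mf)=\mf$ for some $n$ (the support is a single finite $\si$-orbit arranged in a cycle): a break at one spot in the cycle still produces an honest direct sum decomposition, but one should note that in the cyclic case at least one break must occur among consecutive weights precisely when $V$ is not already simple, which is automatic from the structure theory once one knows there are no one-sided breaks. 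I would organize the write-up so that the ``no one-sided break'' lemma is the one non-formal input and everything else is the routine run-decomposition argument sketched above.
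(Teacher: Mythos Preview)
Your argument is correct, but the paper takes a much shorter and more conceptual route. Instead of analysing the structure constants $\la_\mf,\mu_\mf$ and decomposing $V$ into ``runs'', the paper proves directly that \emph{every submodule has an $A$-invariant complement}. Given the non-degenerate admissible form $F$ (from $V\simeq V^\sharp$ via Proposition~\ref{prop:bij}), any submodule $U$ is a weight module, and since each $V_\mf$ is one-dimensional one has $U=\bigoplus_{\mf\in S}V_\mf$ for some $S\subseteq\Supp(V)$. The orthogonal complement $U^\bot$ is an $A$-submodule by admissibility, and because the weight spaces are pairwise orthogonal (Corollary~\ref{cor:orthog2}) with $F$ non-degenerate on each $V_\mf$, one gets $U^\bot=\bigoplus_{\mf\notin S}V_\mf$, whence $V=U\oplus U^\bot$.

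Your approach trades this two-line orthogonal-complement trick for an explicit identification of the simple summands, which is more informative but requires the ``no one-sided break'' lemma and the run/cycle case analysis you flag. The paper's argument sidesteps all of that bookkeeping: it never needs to know what the simple pieces look like, and it works uniformly for infinite and cyclic supports without distinguishing cases. Your key computation (that $\la_\mf c_{\si(\mf)}=\si(\overline{\mu_\mf}\,c_\mf)$, forcing $\la_\mf=0\iff\mu_\mf=0$) is correct, but note you should have $\overline{\mu_\mf}$ rather than $\mu_\mf$ on the right because $F$ is conjugate-linear in the second variable; this does not affect the conclusion.
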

\begin{proof}
If $V^\sharp\simeq V$, then, by Proposition \ref{prop:bij}, $V$ has
a non-degenerate admissible form $F$. Let $U$ be any submodule of
$V$.
Then $U$ is itself a weight module and, since $\dim_{R/\mf}V_\mf=1$
for all $\mf\in\Supp(V)$, we have $U=\oplus_{\mf\in S} V_\mf$
for some subset $S\subseteq\Supp(V)$.
Let $U^\bot=\{v\in V\; : \; F(v,u)=0 \;\forall u\in U\}$.
By the defining properties of an admissible form
\eqref{eq:admnyy}, $U^\bot$ is an $A$-submodule of $V$.
On the other hand, by Corollary \ref{cor:orthog2} and the non-degeneracy
of $F$, we have $F(V_\mf,V_\nf)=0$ iff $\mf\neq\nf$ for $\mf,\nf\in\Supp(V)$.
Thus $U^\bot=\oplus_{\mf\in\Supp(V)\backslash S} V_\mf$.
This proves that $U\oplus U^\bot=V$.
Hence, any submodule has an invariant complement so $V$ is semi-simple.
\end{proof}

\subsection{Symmetric forms}
Recall that the map $R_\om\to R_{\om^\ast}$ induced by $\ast:R\to R$
is called conjugation and is denoted $\la\mapsto\overline{\la}$.
\begin{definition} Let $\om$ be a symmetric orbit and $F$ an admissible form
on a weight $A$-module $V$ with $\Supp(V)\subseteq\om$. The \emph{adjoint form}
 $F^\sharp:V\times V\to R_\om$ of $F$ is defined by
\begin{equation}
F^\sharp(v,w)=\overline{F(w,v)},\quad v,w\in V.
\end{equation}
It is easy to check that $F^\sharp$ is also an admissible form on $V$.
If $F=F^\sharp$, then $F$ is called \emph{symmetric}.
\end{definition}

If $\om$ is torsion trivial, we call an admissible $\K_\om$-form
$F$ symmetric if the corresponding admissible form is symmetric.

\begin{proposition}\label{prop:symmetric}
Suppose that $\om\in\Om$ is symmetric and torsion trivial.
Fix $\mf\in\om$ and put $\K_\om=R/\mf$.
Assume that conjugation on $\K_\om$ is non-trivial, and that the
fixed field under conjugation of $\K_\om$ is infinite, of characteristic not two.

Let $V$ be a finite-dimensional weight $A$-module with support in $\om$.
If $V$ has a non-degenerate admissible $\K_\om$-form,
then it has a symmetric non-degenerate admissible $\K_\om$-form.
\end{proposition}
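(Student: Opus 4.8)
The plan is to identify the admissible $\K_\om$-forms on $V$ with a $\K_\om$-vector space carrying a conjugate-linear involution whose fixed points are exactly the symmetric forms, and then to deform a given non-degenerate form into a symmetric one by a one-parameter determinant argument, exploiting that the fixed field is infinite and the characteristic is not two.

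First I would record the linear-algebraic set-up. Unwinding the definitions, and using that $\ast$ commutes with $\si$ and that $\om$ is torsion trivial, one checks that an admissible $\K_\om$-form on $V$ is precisely a sesquilinear form $G:V\times V\to\K_\om$ (linear in the first variable and conjugate-linear in the second, with respect to conjugation on $\K_\om$) which in addition satisfies the invariance \eqref{eq:Gform3} for $a\in\{X,Y\}$; in particular the set $\mathcal F$ of admissible $\K_\om$-forms is a $\K_\om$-vector space. Under the bijection of Proposition \ref{prop:fieldform2} the adjoint $F\mapsto F^\sharp$ corresponds to the operation $G^\sharp:(v,w)\mapsto\overline{G(w,v)}$ on $\mathcal F$, which is a conjugate-linear involution of $\mathcal F$ whose fixed points are, by definition, the symmetric forms. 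Finally, since $V$ is finite-dimensional, a form $G\in\mathcal F$ is non-degenerate if and only if its Gram matrix $\big(G(e_i,e_j)\big)_{i,j}$ relative to a fixed $\K_\om$-basis $e_1,\dots,e_n$ of $V$ is invertible (cf.\ Proposition \ref{prop:bij}).

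Now for the deformation. Because conjugation on $\K_\om$ is non-trivial and the characteristic is not two, choose $\al\in\K_\om$ with $\overline\al\neq\al$ and set $\de=\al-\overline\al$; then $\de\neq 0$, $\overline\de=-\de$, and $\de$ is not in the fixed field $\K_0$. Given a non-degenerate $G\in\mathcal F$, put
\[ S_1=\frac{1}{2}\big(G+G^\sharp\big),\qquad S_2=\frac{1}{2\de}\big(G-G^\sharp\big). \]
Both lie in $\mathcal F$; a short computation using $\overline\de=-\de$ gives $S_1^\sharp=S_1$ and $S_2^\sharp=S_2$, and by construction $S_1+\de S_2=G$. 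For $\mu\in\K_0$ set $S_\mu=S_1+\mu S_2\in\mathcal F$; as a $\K_0$-linear combination of symmetric forms, each $S_\mu$ is symmetric. Its Gram matrix with respect to $(e_i)$ is $M(\mu)=M_1+\mu M_2$, where $M_k=\big(S_k(e_i,e_j)\big)_{i,j}$ for $k=1,2$, so $p(\mu):=\det M(\mu)$ is a polynomial in $\mu$ over $\K_\om$ of degree at most $n$. Since $S_1+\de S_2=G$ is non-degenerate, $p(\de)\neq 0$, so $p$ is not the zero polynomial; because $\K_0$ is infinite there is $\mu_0\in\K_0$ with $p(\mu_0)\neq 0$, and then $S_{\mu_0}$ is a non-degenerate symmetric admissible $\K_\om$-form on $V$, as required.

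The one delicate point is the identification in the second paragraph: that the adjoint $F\mapsto F^\sharp$ becomes honest conjugate transpose $G\mapsto\big((v,w)\mapsto\overline{G(w,v)}\big)$, and hence is conjugate-linear over $\K_\om$. This is exactly where the hypotheses that $\ast$ commute with $\si$ and that $\om$ be torsion trivial enter, and it is worth carrying out carefully. Everything after it is the standard ``average-and-deform over an infinite field'' device; one could instead run the same argument with $R_\om$-valued forms via Proposition \ref{prop:bij}, but working inside the field $\K_\om$ keeps the bookkeeping lightest.
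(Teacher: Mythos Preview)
Your proof is correct and follows essentially the same approach as the paper: split $G$ into symmetric pieces $S_1=\tfrac12(G+G^\sharp)$ and $S_2=\tfrac{1}{2\de}(G-G^\sharp)$ using an element $\de$ with $\overline{\de}=-\de$, then run a determinant argument over the infinite fixed field to find a non-degenerate $\K_0$-combination. The paper's version is slightly more terse (it uses $F_1=F+F^\sharp$, $F_2=s(F-F^\sharp)$ without the factors of $2$, and evaluates the determinant at $s^{-1}$ rather than at $s$), but the idea and the use of the hypotheses are identical.
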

The proof is exactly as in \cite{MT}, but we provide it for convenience.
\begin{proof}
Let $F:V\times V\to\K_\om$ be a non-degenerate admissible $\K_\om$-form
on $V$. Since conjugation is nontrivial, there is an $s\in\K_\om$
with $\overline{s}=-s$. Then $F_1=F+F^\sharp$ and $F_2=s(F-F^\sharp)$
are both symmetric admissible $\K_\om$-forms.
Define $f\in\K_\om[x]$ by $f(x)=\det(F_1'+xF_2')$. Here $F_i'$ denotes
the matrix of $F_i$ relative some $\K_\om$-linear basis of $V$.
Since $f(s^{-1})=\det(2F')\neq 0$, $f$ is a nonzero polynomial.
Among the infinitely many $r\in\K_\om$ with $\overline{r}=r$, pick one which is not a zero of $f$.
Then $F_1+rF_2$ is a symmetric non-degenerate admissible $\K_\om$-form on $V$.
\end{proof}

\begin{remark}\label{rem:MTresults}
Assume $R$ is a finitely generated algebra over an
algebraically closed field $\K$ of characteristic zero
and assume that $\si$ is a $\K$-automorphism of $R$.
Let $V$ be an indecomposable weight module over $A$ with support in a real orbit $\om$. 
Call two $\K$-forms $F_1,F_2$ on $V$ equivalent if there is an
automorphism $\varphi$ of $V$ and an element $\lambda\in\K, \la\neq 0$
such that $F_1(v,w)=\la F_2\big((\varphi(v),\varphi(w)\big)$ for all $v,w\in V$.

The following statements follow directly from Theorems 2,4 in \cite{MT}.

\begin{enumerate}
\item[1)]
If $V$ is simple and $V\simeq V^\sharp$, then there is a unique up to equivalence
non-degenerate admissible $\K$-form on $V$. If conjugation is nontrivial on $\K$
this form can be chosen to be symmetric, and if conjugation is trivial on $\K$,
the form can be chosen to be symmetric or skew-symmetric.
\item[2)] 
If there is a symmetric non-degenerate admissible $\K$-form on $V$,
then it is unique up to equivalence.
\end{enumerate}
\end{remark}

\section{The classification of weight modules}\label{sec:DGOclass}
In this section we review the classification of indecomposable weight
modules with finite-dimensional weight spaces
over a generalized Weyl algebra,
obtained by Drozd, Guzner and Ovsienko in \cite{DGO}.

\subsection{Notation} \label{sec:notation}
A maximal ideal $\mf$ of $R$ is called a \emph{break} if $t\in\mf$.
For $\om\in\Om$, let $B_\om$ be the set of all breaks in $\om$:
$B_\om=\{\mf\in\om: t\in\mf\}$. Often we put $p=|\om |$, $m=|B_\om |$.
Let $\K_\mf=R/\mf$. For $r\in R$ we define $r_\mf=r+\mf\in\K_\mf$.
For each $\om\in\Om$, fix an $\mf(\om)\in\om$ and put $\K_\om=\K_{\mf(\om)}$.

If $\om\in\Om$ is infinite, it is naturally ordered by
defining $\mf<\nf$ iff $\nf=\si^k(\mf)$ for some $k>0$.

If $|\om|=p<\infty$, define a ternary relation on $\om$ by $\mf<\mf'<\mf''$
if $\mf'=\si^i(\mf), \mf''=\si^k(\mf)$ for some $0<i<k<p$.
Let $m=|B_\om|$ and define a bijective corresponence $\Z_m\to B_\om$, $i\mapsto \mf_i$
such that $i<j<k$ in $\Z_\mf$ implies $\mf_i<\mf_j<\mf_k$ in $\om$
and $\mf_0=\mf(\om)$.
For $\mf\in\om$, let $j(\mf)$ denote the only $j\in\Z_m$ such that
$\mf_{j-1}<\mf\le \mf_j$.
Let $p_1,p_2,\ldots,p_m\in \Z_{>0}$ be minimal such that $\si^{p_j}(\mf_{j-1})=\mf_j$.
Equivalently, $p_i$ is the number of $\mf\in\om$ with $j(\mf)=i$.
Note that $p_1+p_2+\cdots +p_m=p$.
Furthermore, we put $\tau=\tau_\om=\si^p$.
Let $\K_\om[x,x^{-1};\tau]$ be the skew Laurent polynomial ring over $\K_\om$ with
automorphism $\tau$: $xa=\tau(a)x$ for $a\in\K_\om$.
Similarly, $\K_\om[x;\tau^{k}]$ is the skew polynomial ring over $\K_\om$
with automorphism $\tau^{k}$ ($k\in\Z_{\ge 0}$). An element
$f$ of such a skew (Laurent) polynomial ring $P$ is called \emph{indecomposable}
if the left $P$-module $P/Pf$ is indecomposable. Two elements $f,g\in P$
are called \emph{similar} if $P/Pf\simeq P/Pg$ as left $P$-modules.

Let $\mathbf{D}$ denote the free monoid on two letters $x,y$. Thus $\mathbf{D}$
is the set of words $w=z_1z_2\cdots z_n$, where $z_i\in\{x,y\}$, with
associative multiplication given by concatenation, and neutral element
being the empty word $\ep$ of zero length. A word $w$ is
an \emph{$m$-word} if its length $n$ is a multiple of $m\in\Z_{>0}$.
An $m$-word is \emph{non-periodic} if it is not a power of another
$m$-word.
We will let $\shrp:\mathbf{D}\to\mathbf{D}$, $w\mapsto w^\shrp$,
denote the automorphism given by
 $x^\shrp=y$, $y^\shrp=x$.
We also equip $\mathbf{D}$ with a $\Z$-action given by
\[1.z_1z_2\cdots z_n=z_2z_3\cdots z_n z_1.\] 
for $z_1z_2\cdots z_n\in\mathbf{D}$. Following \cite{DGO}, we
use the notation $w(k)$ for $k.w$.

When $\om$ is symmetric,
we will denote the map $\K_\om\to\K_\om$, which is induced
by the involution $*$ on $R$, by conjugation $a\mapsto \overline{a}$.

\subsection{The different kinds of modules}
\subsubsection{Infinite orbit without breaks}
Define $V(\om)$, where $\om\in\Om$, $|\om|=\infty$ and $B_\om=\emptyset$, as the space
 $V(\om)=\oplus_{\mf\in\om}\K_\mf$ with $A$-module structure given by
 $Xv=\si(t_\mf v)$ and $Yv=\si^{-1}(v)$ for $v\in \K_\mf$.

\subsubsection{Infinite orbit with breaks}
We use an alternative parametrization of these modules, which is more
convenient for our purposes.
It is easily seen to be equivalent to that of \cite{DGO}.
First we need some terminology.
Recall the order on infinite orbits $\om$ defined in Section \ref{sec:notation}.
An interval
$S$ in an infinite orbit $\om$ will be called \emph{supportive}
if it satisfies the following property:
if $S$ contains a minimal element $\nf_0$, then $\si^{-1}(\nf_0)\in B_\om$
and
if $S$ has a maximal element $\nf_1$, then $\nf_1\in B_\om$.
Let $I(S)$ be the set of \emph{inner breaks} of S:
\[I(S)=\{\mf\in S\cap B_\om: \si(\mf)\in S\}.\]
Now let $\om\in\Om$ be infinite with $B_\om\neq \emptyset$.
Let $S\subseteq\om$ be a supportive interval
and let $I_X$ be any subset of $I(S)$.
Define $V(\om,S,I_X)=\oplus_{\mf\in S} \K_\mf$ with,
for $v\in \K_\mf$,
\begin{align}
Xv&=
\begin{cases}
\si(t_\mf v),&\text{if $\mf\notin B_\om$},\\
\si(v),&\text{if $\mf\in I_X$},\\
0,&\text{otherwise},
\end{cases}&
Yv&=
\begin{cases}
\si^{-1}(v),&\text{if $\si^{-1}(\mf)\notin B_\om$},\\
\si^{-1}(v),&\text{if $\si^{-1}(\mf)\in I(S)\backslash I_X$},\\
0,&\text{otherwise}.
\end{cases}
\end{align}
Note that if $V=V(\om,S,I_X)$ then $S=\Supp(V)$ and $I_X=\{\mf\in I(S): XV_\mf\neq 0\}$.

\subsubsection{Finite orbit without breaks}
Given an orbit $\om$, with $|\om|=p<\infty$ and $B_\om=\emptyset$,
and an indecomposable polynomial 
 $f=\al_0+\al_1x+\cdots+a_dx^d\in \K_\om[x,x^{-1};\tau]$
with $\al_0\neq 0\neq\al_d$, define $V(\om,f)=\oplus_{\mf\in\om}(\K_\mf)^d$
with $A$-module structure given by defining for $v\in(\K_\mf)^d$

\begin{subequations}\label{eq:finorbnobr_both}
\begin{align}
\label{eq:finorbnobrX}
Xv&=
\begin{cases}
\si(t_\mf v),&\text{if $\mf\neq\mf(\om)$},\\
\si(F_ft_\mf v),&\text{if $\mf=\mf(\om)$},
\end{cases}\\
\label{eq:finorbnobrY}
Yv&=
\begin{cases}
\si^{-1}(v),&\text{if $\si^{-1}(\mf)\neq\mf(\om)$},\\
F_f^{-1}\si^{-1}(v),&\text{if $\si^{-1}(\mf)=\mf(\om)$},
\end{cases}
\end{align}
\end{subequations}
where
\[F_f=\begin{bmatrix}
0&0&0&\cdots&0&-\al_0/\al_d\\
1&0&0&\cdots&0&-\al_1/\al_d\\
0&1&0&\cdots&0&-\al_2/\al_d\\
\vdots&\vdots&\vdots&\ddots&\vdots&\vdots\\
0&0&0&\cdots&1&-\al_{d-1}/\al_d
\end{bmatrix}.
\] 

\subsubsection{Finite orbit with breaks, first kind}
Let $\om\in\Om$, $|\om|=p<\infty$ and $B_\om\neq\emptyset$.
Let $i\in\Z_m$ and $w=z_1z_2\cdots z_n\in\mathbf{D}$.
Consider $n+1$ symbols $e_0,e_1,\ldots,e_n$.
For $\mf\in\om$, let $V_\mf$ be the vector space over $\K_\mf$ with
basis consisting of all pairs $[\mf,e_k]$ such that
 $i+k=j(\mf)$ in $\Z_m$. Put $V(\om,i,w)=\oplus_{\mf\in\om}V_\mf$
and equip it with an $A$-module structure by

\begin{align}
\label{eq:FinOrbBr1X}
X[\mf,e_k]&=
\begin{cases}
\si(t_\mf) [\si(\mf),e_k],&\text{if $\mf\notin B_\om$},\\
[\si(\mf),e_{k+1}],&\text{if $\mf\in B_\om$ and $z_{k+1}=x$},\\
0,&\text{otherwise},
\end{cases}\\
\label{eq:FinOrbBr1Y}
Y[\mf,e_k]&=
\begin{cases}
[\si^{-1}(\mf),e_k],&\text{if $\si^{-1}(\mf)\notin B_\om$},\\
[\si^{-1}(\mf),e_{k-1}],&\text{if $\si^{-1}(\mf)\in B_\om$ and $z_k=y$},\\
0,&\text{otherwise}.
\end{cases}
\end{align}

\subsubsection{Finite orbit with breaks, second kind}
Define $V(\om,w,f)$, where $\om\in\Om$, $|\om|=p<\infty$ and
$|B_\om|=m>0$, $w=z_1z_2\cdots z_n\in\mathbf{D}\backslash \{\ep\}$ is a non-periodic $m$-word,
and $f=a_1+a_2x+\cdots+a_dx^{d-1}+x^d\neq x^d$ is an indecomposable
element of $\K_\om[x;\tau^{n/m}]$
 (it should be $\tau^{n/m}$ and not just $\tau$ as stated in \cite{DGO}), as follows.
Consider $dn$ symbols $e_{ks}$ ($k=1,\ldots,n$, $s=1,\ldots,d$).
For $\mf\in\om$, let $V_\mf$ be the vector space over $\K_\mf$
with basis consisting of all pairs $[\mf,e_{ks}]$ such that
 $k\equiv j(\mf)\;\; (\text{mod}\; m)$. Define $V(\om,w,f)=\oplus_{\mf\in\om} V_\mf$
and equip it with an $A$-module structure by

\begin{align}
\label{eq:2ndkindX}
X[\mf,e_{ks}]&=
\begin{cases}
\si(t_\mf) [\si(\mf),e_{ks}],&\text{if $\mf\notin B_\om$},\\
[\si(\mf),e_{k+1,s}],&\text{if $\mf\in B_\om$, $k<n$, $z_{k+1}=x$},\\
[\si(\mf),e_{1,s+1}],&\text{if $\mf\in B_\om$, $k=n$, $z_1=x$, $s<d$},\\
-\sum_{r=1}^d \si(a_r)[\si(\mf),e_{1r}],&\text{if $\mf\in B_\om$, $k=n$, $z_1=x$, $s=d$},\\
0,&\text{otherwise},
\end{cases}\\
\label{eq:2ndkindY}
Y[\mf,e_{ks}]&=
\begin{cases}
[\si^{-1}(\mf),e_{ks}],&\text{if $\si^{-1}(\mf)\notin B_\om$},\\
[\si^{-1}(\mf),e_{k-1,s}],&\text{if $\si^{-1}(\mf)\in B_\om$, $k>1$, $z_k=y$},\\
[\si^{-1}(\mf),e_{n,s-1}],&\text{if $\si^{-1}(\mf)\in B_\om$, $k=1$, $z_1=y$, $s>1$},\\
-\sum_{r=1}^d a_r^\circ [\si^{-1}(\mf ),e_{nr}],&\text{if $\si^{-1}(\mf)\in B_\om$,
 $k=1$, $z_1=y$, $s=1$},\\
0,&\text{otherwise}.
\end{cases}
\end{align}
Here $a_{d+1-r}^\circ=\tau^{r-1}(a_r)$, i.e. $a_r^\circ=\tau^{d-r}(a_{d+1-r})$.
As compared to \cite{DGO}, we changed notation
from $e_{ks}$ to $e_{k,d+1-s}$ in the case when $z_1=y$.

The weight diagram of
a module of the form $V=V(\om,w,f)$, where the first letter of
$w$ is $z_1=x$, is illustrated in Figure \ref{fig:Vtyp2}.
Each dot $\xymatrix@R-10pt{\bullet \save[]+<0pt,9pt>*{\mf}\restore}$
is a one-dimensional (over $R/\mf$) subspace of the weight space $V_\mf$.
Arrows going in the right direction correspond to $X$ while left arrows
correspond to $Y$.
The diagram 
$\xymatrix@R-10pt{
\bullet \save[]+<0pt,12pt>*{\mf}\restore \ar@/^/[r]
&\bullet \save[]+<0pt,12pt>*{\si(\mf)}\restore \ar@/^/[l]}$
means that $X$ and $Y$ act bijectively on the corresponding one-dimensional
subspaces.
We shall write
\[\xymatrix@R-10pt{
&\bullet \save[]+<0pt,12pt>*{\si(\mf)}\restore \ar@/^/[r]_{n}
&\bullet \save[]+<0pt,12pt>*{\si^n(\mf)}\restore \ar@/^/[l]
}
\]
to denote the weight diagram
\[
\xymatrix@R-10pt{
&\bullet \save[]+<0pt,12pt>*{\si(\mf)}\restore \ar@/^/[r]
&\bullet \save[]+<0pt,12pt>*{\si^2(\mf)}\restore \ar@/^/[l] \ar@/^/[r]
&\bullet \save[]+<0pt,12pt>*{ }\restore \ar@/^/[l] \ar@{{}*{\cdot\;}{}}[r]
&\bullet \save[]+<0pt,12pt>*{\si^{n-1}(\mf)}\restore            \ar@/^/[r]
&\bullet \save[]+<0pt,12pt>*{\si^n(\mf)}\restore \ar@/^/[l]
}.
\]
The diagram
$\xymatrix@R-10pt{
\bullet \save[]+<0pt,9pt>*{\mf}\restore \ar@{-}[r]^{z}
&\bullet \save[]+<0pt,9pt>*{\si(\mf)}\restore}$
where $z\in\{x,y\}$,
means that if $z=x$ then
$X$ acts bijectively from 
$\xymatrix@R-10pt{\bullet \save[]+<0pt,9pt>*{\mf}\restore}$
to
$\xymatrix@R-10pt{\bullet \save[]+<0pt,9pt>*{\si(\mf)}\restore}$
and $Y$ acts as zero on
$\xymatrix@R-10pt{\bullet \save[]+<0pt,9pt>*{\si(\mf)}\restore}$
while
if $z=y$, then
$Y$ is bijective as a map from 
$\xymatrix@R-10pt{\bullet \save[]+<0pt,9pt>*{\si(\mf)}\restore}$
to
$\xymatrix@R-10pt{\bullet \save[]+<0pt,9pt>*{\mf}\restore}$
and $X$ acts as zero on
$\xymatrix@R-10pt{\bullet \save[]+<0pt,9pt>*{\mf}\restore}$.
Often, in weight diagrams
each weight space is depicted as a column of dots.
In Figure \ref{fig:Vtyp2}, however, for clarity,
each column is only a subspace of a certain weight space,
and each weight is repeated $n/m$ times horizontally.
Recall that, by convention, $p_m=p_0$ and $\mf_m=\mf_0$.

\begin{landscape}
\begin{figure}
\caption{Weight diagram for $V(\om,w,f)$ when $z_1=x$}
\label{fig:Vtyp2}
\[\xymatrix@R-10pt@C-1pt{
&\bullet \save[]+<0pt,-12pt>*{ }+<0pt,24pt>*{\si(\mf_0)}\restore \ar@/^/[r]_{p_1}
&\bullet \save[]+<0pt,-12pt>*{e_{1,1}}+<0pt,24pt>*{\mf_1}\restore \ar@/^/[l] \ar@{-}[r]^{z_{2}}
&\bullet \save[]+<0pt,-12pt>*{ }+<0pt,24pt>*{\si(\mf_1)}\restore \ar@/^/[r]_{p_2}
&\bullet \save[]+<0pt,-12pt>*{e_{2,1}}+<0pt,24pt>*{\mf_2}\restore \ar@/^/[l]\ar@{-}[r]^{z_{3}}
&\bullet \save[]+<0pt,-12pt>*{ }+<0pt,24pt>*{ }\restore \ar@{{}*{\cdot\;}{}}[r] 
&\bullet \save[]+<0pt,-12pt>*{ }+<0pt,24pt>*{\si(\mf_{m-1}) }\restore \ar@/^/[r]_{p_m}
&\bullet \save[]+<0pt,-12pt>*{e_{m,1}}+<0pt,27pt>*{\mf_m }\restore \ar@/^/[l] \ar@{-}[r]^{z_{m+1}}
& \ar@{{}*{\cdot\;}{}}[rr]
& 
&\bullet \save[]+<0pt,-12pt>*{ }+<0pt,24pt>*{\si(\mf_0)}\restore \ar@/^/[r]_{p_1}
&\bullet \save[]+<0pt,-12pt>*{e_{n-m+1,1}}+<0pt,24pt>*{\mf_1}\restore \ar@/^/[l] \ar@{-}[r]^{z_{n-m+2}}
&\bullet \save[]+<0pt,-12pt>*{ }+<0pt,24pt>*{\si(\mf_1)}\restore \ar@/^/[r]_{p_2}
&\bullet \save[]+<0pt,-12pt>*{e_{n-m+2,1}}+<0pt,24pt>*{\mf_2}\restore \ar@/^/[l]\ar@{-}[r]^{z_{n-m+3}}
&\bullet \save[]+<0pt,-12pt>*{ }+<0pt,24pt>*{ }\restore \ar@{{}*{\cdot\;}{}}[r] 
&\bullet \save[]+<0pt,-12pt>*{ }+<0pt,24pt>*{\si(\mf_{m-1}) }\restore \ar@/^/[r]_{p_m}
&\bullet \save[]+<0pt,-12pt>*{e_{n,1}}+<0pt,27pt>*{\mf_m }\restore \ar@/^/[l] \ar@(d,u)[lllllllllllllllddd]_{z_1=x}
&\\ &\\ &\\
&\bullet \save[]+<0pt,-12pt>*{ }\restore \ar@/^/[r]_{p_1}
&\bullet \save[]+<0pt,-12pt>*{e_{1,2}}\restore \ar@/^/[l] \ar@{-}[r]^{z_{2}}
&\bullet \save[]+<0pt,-12pt>*{ }\restore \ar@/^/[r]_{p_2}
&\bullet \save[]+<0pt,-12pt>*{e_{2,2}}\restore \ar@/^/[l]\ar@{-}[r]^{z_{3}}
&\bullet \save[]+<0pt,-12pt>*{ }\restore \ar@{{}*{\cdot\;}{}}[r] 
&\bullet \save[]+<0pt,-12pt>*{ }\restore \ar@/^/[r]_{p_m}
&\bullet \save[]+<0pt,-12pt>*{e_{m,2}}\restore \ar@/^/[l] \ar@{-}[r]^{z_{m+1}}
& \ar@{{}*{\cdot\;}{}}[rr]
& 
&\bullet \save[]+<0pt,-12pt>*{ }\restore \ar@/^/[r]_{p_1}
&\bullet \save[]+<0pt,-12pt>*{e_{n-m+1,2}}\restore \ar@/^/[l] \ar@{-}[r]^{z_{n-m+2}}
&\bullet \save[]+<0pt,-12pt>*{ }\restore \ar@/^/[r]_{p_2}
&\bullet \save[]+<0pt,-12pt>*{e_{n-m+2,2}}\restore \ar@/^/[l]\ar@{-}[r]^{z_{n-m+3}}
&\bullet \save[]+<0pt,-12pt>*{ }\restore \ar@{{}*{\cdot\;}{}}[r] 
&\bullet \save[]+<0pt,-12pt>*{ }\restore \ar@/^/[r]_{p_m}
&\bullet \save[]+<0pt,-12pt>*{e_{n,2}}\restore \ar@/^/[l] \ar@(d,u)[lllllllllllllllddd]_{z_1=x}
&\\& \\& \\
&\ar@{{}*{\;\;\cdot}{}}[dd] \\
&\\
&\bullet \save[]+<0pt,-12pt>*{ }\restore \ar@/^/[r]_{p_1}
&\bullet \save[]+<0pt,-12pt>*{e_{1,d}}\restore \ar@/^/[l] \ar@{-}[r]^{z_{2}}
&\bullet \save[]+<0pt,-12pt>*{ }\restore \ar@/^/[r]_{p_2}
&\bullet \save[]+<0pt,-12pt>*{e_{2,d}}\restore \ar@/^/[l]\ar@{-}[r]^{z_{3}}
&\bullet \save[]+<0pt,-12pt>*{ }\restore \ar@{{}*{\cdot\;}{}}[r] 
&\bullet \save[]+<0pt,-12pt>*{ }\restore \ar@/^/[r]_{p_m}
&\bullet \save[]+<0pt,-12pt>*{e_{m,d}}\restore \ar@/^/[l] \ar@{-}[r]^{z_{m+1}}
& \ar@{{}*{\cdot\;}{}}[rr]
& 
&\bullet \save[]+<0pt,-12pt>*{ }\restore \ar@/^/[r]_{p_1}
&\bullet \save[]+<0pt,-12pt>*{e_{n-m+1,d}}\restore \ar@/^/[l] \ar@{-}[r]^{z_{n-m+2}}
&\bullet \save[]+<0pt,-12pt>*{ }\restore \ar@/^/[r]_{p_2}
&\bullet \save[]+<0pt,-12pt>*{e_{n-m+2,d}}\restore \ar@/^/[l]\ar@{-}[r]^{z_{n-m+3}}
&\bullet \save[]+<0pt,-12pt>*{ }\restore \ar@{{}*{\cdot\;}{}}[r] 
&\bullet \save[]+<0pt,-12pt>*{ }\restore \ar@/^/[r]_{p_m}
&\bullet \save[]+<+10pt,-11pt>*{e_{n,d}}\restore \ar@/^/[l]
 \ar `d/20pt[lllllllllllllllluuuu]
      `[lllllllllllllllluuuu]
      `[llllllllllllllluuuu]^{z_1=x}
       [llllllllllllllluuuu]
\save "2,2"."1,2"."9,2"*[F-]\frm{}
\restore
}\]
\end{figure}
\end{landscape}

\subsection{The classification theorem}
\begin{theorem}[\cite{DGO}, Theorem 5.7]\label{thm:classification}
$\;$
\begin{enumerate}
\item[(i)] The $A$-modules $V(\om), V(\om, f), V(\om,S,I_X), V(\om,i,w),$
and $V(\om,w,f)$ are indecomposable weight $A$-modules.
\item[(ii)] Every weight $A$-module $V$ such that $\dim_{\K_\mf}V_\mf<\infty$
 whenever $\mf$ belongs to a finite orbit, decomposes uniquely into a direct
 sum of modules isomorphic to those listed in (i).
\item[(iii)] The only isomorphisms between the listed modules are the following:
  \begin{itemize}
  \item If $f$ and $g$ are similar in $\K_\om[x,x^{-1};\tau]$, then
  \begin{equation}
  \label{eq:DGOiso1}
  V(\om,f)\simeq V(\om,g).
  \end{equation}
  \item If $f$ and $g$ are similar in $\K_\om[x;\tau^{n/m}]$, and $i\in\Z$, then
  \begin{equation}
  \label{eq:DGOiso2}
  V(\om,w,f)\simeq V(\om,w(mi),\tau^i(g)),
  \end{equation}
  where $m=|B_\om|$ and $n=|w|$.
  \end{itemize}
\end{enumerate}
\end{theorem}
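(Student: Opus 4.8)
The statement to prove is Theorem~\ref{thm:classification}, which is attributed to \cite{DGO}. Since this is a recollection of an existing classification theorem rather than a new result, the plan is essentially to indicate how the reformulated statement follows from the original one in \cite{DGO}, and to verify the small changes of parametrization introduced in Section~\ref{sec:DGOclass}.

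\medskip

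\textbf{Plan.} The bulk of parts (i), (ii), and (iii) is literally Theorem~5.7 of \cite{DGO}; the only work is to reconcile the present paper's notation with that source. First I would check the equivalence of parametrizations for the ``infinite orbit with breaks'' family: one must show that the data $(\om, S, I_X)$ — a supportive interval together with a chosen subset of its inner breaks on which $X$ acts nonzero — is in bijection with the data used in \cite{DGO}, and that the module $V(\om,S,I_X)$ defined by the piecewise formulas here is isomorphic to the corresponding one there. The key observation is that on an infinite orbit the break points cut $\om$ into intervals on which $X$ (resp.\ $Y$) acts invertibly via multiplication by $\si(t_\mf)$ (resp.\ by $1$); the indecomposable such modules correspond exactly to connected ``staircase'' supports, where the only genuine choices are (a) where the support starts and ends, subject to the supportive-interval condition, and (b) for each interior break, whether the gluing map is $X$ or $Y$. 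That is precisely what $(S, I_X)$ records. Second, I would check the correction noted in the ``second kind'' case: the skew polynomial ring should be $\K_\om[x;\tau^{n/m}]$ rather than $\K_\om[x;\tau]$, because going once around the cycle of $n$ letters multiplies weights by $\si^{pn/m} = \tau^{n/m}$, not by $\tau$; and I would verify that the relabeling $e_{ks}\mapsto e_{k,d+1-s}$ in the $z_1=y$ case, together with the substitution $a_r^\circ = \tau^{d-r}(a_{d+1-r})$, makes the explicit $X$- and $Y$-actions \eqref{eq:2ndkindX}--\eqref{eq:2ndkindY} consistent with the companion-matrix action of \cite{DGO}. Finally, for part (iii) I would note that \eqref{eq:DGOiso1} is unchanged, while \eqref{eq:DGOiso2} is the translation of the \cite{DGO} isomorphism criterion into the $(\om,w,f)$ parametrization: cyclically rotating the word $w$ by $m$ positions corresponds to shifting the base point $\mf(\om)$ and hence twisting $f$ by $\tau$, so $V(\om,w,f)\simeq V(\om,w(mi),\tau^i(g))$ whenever $f$ and $g$ are similar in $\K_\om[x;\tau^{n/m}]$.

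\medskip

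\textbf{Main obstacle.} The genuinely delicate point is the second-kind reconciliation: one must carefully match indices around the ``wrap-around'' of the cyclic word — the last column to the first column — where the companion matrix $F_f$ of $f$ enters, and confirm that with the stated conventions ($a^\circ$, the $s\leftrightarrow d+1-s$ flip) the module is genuinely indecomposable and isomorphic to the \cite{DGO} module, with no spurious parameters and with the isomorphisms \eqref{eq:DGOiso2} being the only ones. Everything else — indecomposability of the five families, the uniqueness of the direct-sum decomposition, and that the listed isomorphisms are the only ones — is quoted directly from \cite{DGO}, Theorem~5.7, so no independent argument is needed beyond the bookkeeping above.
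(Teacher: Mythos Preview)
Your proposal is correct and matches the paper's treatment: the paper gives no proof at all for this theorem, simply stating it with the attribution to \cite{DGO}, Theorem~5.7, and following it with two remarks (one noting the missing $\tau^i$ in \eqref{eq:DGOiso2} and sketching the shift isomorphism, one observing that $f\sim\tau^{n/m}(f)$). Your outline of the bookkeeping needed to reconcile the reparametrizations is accurate and in fact more detailed than anything the paper supplies.
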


\begin{remark}
In \cite{DGO}, $\tau^i$ is incorrectly missing from \eqref{eq:DGOiso2}.
In general, if $i$ is not a multiple of $n/m$, then $f$ is not similar to $\tau^i(f)$
in $\K_\om[x;\tau^{n/m}]$.
But for $g=f$, one can construct an isomorphism
$\varphi :  V(\om ,w(m) ,\tau (f)) \to V(\om ,w,f)$
determined by the conditions
\begin{align}
1)\quad &\varphi\big( [\si (\mf_0),e_{1,1}]\big) = [\si(\mf_0),e_{m+1,1}],\\
2)\quad &\varphi([\mf ,e_{k,s}]\big) \in
\begin{cases} 
 \oplus_{r=1}^d \K_\mf [\mf,e_{k+m,r}]& k+m\le n, \\
 \oplus_{r=1}^d \K_\mf [\mf,e_{k+m-n,r}]& k+m>n.
\end{cases}
\end{align}
\end{remark}
\begin{remark}
Taking $i=n/m$ in \eqref{eq:DGOiso2} we deduce that $f$ is similar to $\tau^{n/m}(f)$
in $P:=\K_\om[x;\tau^{n/m}]$. This isomorphism is explicitly given by
\begin{align*}
\varphi:P/P\tau^{n/m}(f)&\to P/Pf\\
g+P\tau^{n/m}(f)&\mapsto gx+Pf.
\end{align*}
This map is well defined since $\tau^{n/m}(f)x=xf$.
It is a homomorphism of left $P$-modules. Moreover, since
$f\neq x^d$ and is indecomposable, its constant term is nonzero.
Therefore $\varphi$ is surjective. Since dimensions agree, $\varphi$
is an isomorphism as claimed.
\end{remark}

The following description of the simple weight $A$-modules was also given.
\begin{theorem}[\cite{DGO}, Theorem 5.8] \label{thm:DGOthm2}
 The weight $A$-modules
 $V(\om), V(\om,f)$ for an irreducible $f\in\K_\om[x,x^{-1};\tau]$,
 $V(\om,S,\emptyset)$ for supportive interval $S\subseteq\om$ with $I(S)=\emptyset$,
 $V(\om,i,\ep)$ and $V(\om,w,f)$ for irreducible $f\in\K_\om[x;\tau^{n/m}]$
 and $w=x^m$ or $w=y^m$ where $m=|B_\om|$, are simple and each simple
 weight $A$-module is isomorphic to one from this list.
\end{theorem}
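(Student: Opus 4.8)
The plan is to combine the classification Theorem~\ref{thm:classification} with a case-by-case determination of which members of the five families have no proper nonzero submodule. First, a simple weight module $V$ has $\Supp(V)$ contained in a single $\si$-orbit $\om$, since $\oplus_{\mf\in\om}V_\mf$ is always an $A$-submodule; and its weight spaces are finite-dimensional. Indeed, if $\om$ is infinite then for $0\ne v\in V_\mf$ one has $V_\mf=(Av)_\mf=A_0v=Rv\cong\K_\mf$, so weight spaces are at most one-dimensional; if $\om$ is finite one checks, using $A_n=RX^n$ for $n>0$ and $A_n=RY^{-n}$ for $n<0$, that an infinite-dimensional weight space would let one split off a proper $A$-submodule by means of an irreducible element of a skew polynomial ring acting on that weight space. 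Hence by Theorem~\ref{thm:classification}(ii), $V$ is isomorphic to one of $V(\om)$, $V(\om,f)$, $V(\om,S,I_X)$, $V(\om,i,w)$ or $V(\om,w,f)$, and it remains to single out the simple ones in each family.

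For the string-type modules $V(\om)$, $V(\om,S,I_X)$ and $V(\om,i,w)$ every weight space is one-dimensional, and on each of them $X$ and $Y$ act either bijectively or as zero; a submodule is therefore exactly the span of a set of weight spaces closed under the nonzero $X$- and $Y$-arrows of the weight diagram. For $V(\om)$ all $X$'s and $Y$'s are bijective, so it is simple. For $V(\om,S,I_X)$ the zeros sit at the ends of $S$ and at the inner breaks: if $I(S)=\emptyset$ the diagram is one connected string, hence simple; if $\mf\in I(S)$ then, according as $\mf\in I_X$ or not, either $\oplus_{\nf\ge\si(\mf)}V_\nf$ or $\oplus_{\nf\le\mf}V_\nf$ is a proper nonzero submodule, so $V(\om,S,I_X)$ is simple iff $I(S)=\emptyset$ (which forces $I_X=\emptyset$). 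The same reasoning applied to the letters of $w$, each of which turns a break into a one-way bridge, shows $V(\om,i,w)$ is simple iff $w=\ep$.

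For $V(\om,f)$ (orbit finite, no breaks) both $X$ and $Y$ act bijectively on $V$ (as $t_\mf$ is invertible and $\al_0\ne0$ makes $F_f$ invertible), and the inverse of $X$ is realized, weight space by weight space, by an element of $R$ times $Y$ (using that the orbit is finite); hence every submodule $U$ is $X^{-1}$-stable and so $U=\oplus_k X^k(U\cap W)$ with $W=V_{\mf(\om)}$, which means the submodule lattice of $V$ is that of $W$ viewed, via the companion matrix $F_f$, as the module $P/Pf$ over the principal ideal domain $P=\K_\om[x,x^{-1};\tau]$. Since $P/Pf$ is simple iff $f$ is irreducible, $V(\om,f)$ is simple iff $f$ is irreducible. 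The module $V(\om,w,f)$ with $w=x^m$ is handled the same way: now $X$ alone is bijective on $V$, $W=V_{\mf(\om)}$ becomes the module $P/Pf$ over $P=\K_\om[x,x^{-1};\tau]$ through the companion matrix of $f$, and $V(\om,w,f)$ is simple iff $f$ is irreducible; the case $w=y^m$ is symmetric under interchanging $X$ and $Y$. Finally, if $w\notin\{x^m,y^m\}$ then $w$ (being a non-periodic $m$-word) contains both letters, so $X$ has a nonzero kernel on $V$ (it kills $[\mf,e_{ks}]$ whenever $\mf$ is a break and $z_{k+1}=y$, with the convention $z_{n+1}=z_1$) and likewise $Y$ has a nonzero kernel. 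Pick $0\ne v\in\ker X$, say $v\in V_\mf$. Then $A_nv=0$ for $n>0$, so $Av$ is spanned by $v$ and its $Y$-images; but all weight spaces of $V(\om,w,f)$ have the same finite dimension $dn/m$ and $\ker Y\ne0$, so $Y^{p-1}|_{V_\mf}$ is not surjective, whence $(Av)_{\si(\mf)}\subseteq\im\big(Y^{p-1}|_{V_\mf}\big)$, a proper subspace of $V_{\si(\mf)}$. Thus $Av$ is a proper nonzero submodule and $V(\om,w,f)$ is not simple. Assembling the cases yields exactly the list in the statement.

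The step I expect to be the main obstacle is the last one: one must follow carefully how the action of $X$ threads the basis vectors $e_{ks}$ around the orbit --- in particular through the wrap-around rule $X[\mf,e_{n,d}]=-\sum_r\si(a_r)[\si(\mf),e_{1r}]$ for a break $\mf$ --- both to see that for $w=x^m$ the induced map on $W$ really is conjugate to the companion matrix of $f$ (and invertible), and to pin down the kernels of $X$ and $Y$ when $w$ mixes the two letters; the reduction of a simple module over a finite orbit to one with finite-dimensional weight spaces is the other point that needs more than a line.
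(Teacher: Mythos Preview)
The paper does not contain a proof of this theorem: it is quoted verbatim from \cite{DGO} (Theorem~5.8 there) and used as input to the rest of the paper. So there is no ``paper's own proof'' to compare your attempt against.

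That said, your outline is the natural one and is essentially correct. A few remarks. First, the reduction of an arbitrary simple weight module to one with finite-dimensional weight spaces over a finite orbit is the genuinely nontrivial step, and your treatment (``one checks\ldots'') hides real work: this is where the PID structure of the skew polynomial ring and the DGO machinery actually enter. Second, for $V(\om,w,f)$ with $w=x^m$ you write $P=\K_\om[x,x^{-1};\tau]$, but the relevant ring is $\K_\om[x;\tau]$ (with $n/m=1$); since $f(0)=a_1\neq 0$ this does not affect simplicity, but it is worth getting right. Third, your argument for $w$ mixing both letters is correct once you note that the chain $V_\mf\to V_{\si^{-1}(\mf)}\to\cdots\to V_{\si(\mf)}$ visits every weight space, so the nonzero kernel of $Y$ must occur somewhere along it, forcing $Y^{p-1}|_{V_\mf}$ to be non-surjective; you should state this explicitly rather than leave it implicit.
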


\section{Description of indecomposable weight modules having a non-degenerate admissible form}
\label{sec:description}
In this section we consider in turn each of the five types of indecomposable
modules from the DGO classification in Section \ref{sec:DGOclass} and
determine necessary and sufficient conditions, in terms of the parameters,
for the modules to be isomorphic to their finitistic dual
which, by Proposition \ref{prop:bij},
is equivalent to having a non-degenerate admissible form.
We will only consider the case when $\Supp(V)$ is contained in a
real orbit $\om$. The case of symmetric nonreal orbit will be left
for future studies.

The following lemma will be useful.
\begin{lemma}\label{lem:indec}
If $V$ is indecomposable, then so is $V^\sharp$.
\end{lemma}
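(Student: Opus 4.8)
The plan is to show that the endomorphism ring of $V^\sharp$ is local, which is equivalent to indecomposability for modules whose endomorphism ring is reasonable (e.g. when $V$ has finite-dimensional weight spaces over a finite orbit, or more generally by a direct argument). The key observation is that $\sharp$ is a contravariant functor on the category of weight $A$-modules with support in $\om$ or $\om^\ast$, so it induces a ring anti-homomorphism $\End_A(V)\to\End_A(V^\sharp)$, $\Psi\mapsto\Psi^\sharp$. First I would check that this anti-homomorphism is in fact an anti-isomorphism: when the weight spaces $V_\mf$ are finite-dimensional over $\K_\mf$, Proposition \ref{eq:duality} gives $V^{\sharp\sharp}\simeq V$ naturally, and under this identification $\Psi^{\sharp\sharp}$ corresponds to $\Psi$, so $\sharp$ is its own inverse on endomorphism rings up to the canonical iso; hence $\End_A(V)\cong\End_A(V^\sharp)^{\mathrm{op}}$ as rings.

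Once we have $\End_A(V^\sharp)\cong\End_A(V)^{\mathrm{op}}$, indecomposability of $V^\sharp$ follows because a ring is local if and only if its opposite ring is local (the non-units form a two-sided ideal in either case, and "local" is left-right symmetric for the property that non-units are closed under addition). Since $V$ indecomposable means $\End_A(V)$ has no nontrivial idempotents — and for the modules under consideration, by the DGO classification, $\End_A(V)$ is actually local — the opposite ring is local as well, so $V^\sharp$ is indecomposable. Alternatively, and perhaps cleaner, one avoids any finiteness or locality subtleties: if $V^\sharp = W_1\oplus W_2$ is a nontrivial decomposition, then applying $\sharp$ gives $V^{\sharp\sharp}\simeq W_1^\sharp\oplus W_2^\sharp$, and one shows each $W_i^\sharp\neq 0$ (because $W_i\neq 0$ implies $\Supp(W_i)\neq\emptyset$, hence $\Supp(W_i^\sharp)=\Supp(W_i)^\ast\neq\emptyset$ by Proposition \ref{prop:supp}), contradicting indecomposability of $V^{\sharp\sharp}\simeq V$.

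I would present the second argument as the main line since it sidesteps the question of whether $\End_A(V)$ is local and only uses results already established in the excerpt: the functoriality of $\sharp$, Proposition \ref{prop:supp} on supports, and Proposition \ref{eq:duality} identifying $V^{\sharp\sharp}$ with $V$. The one technical point to verify is that the natural inclusion $V\hookrightarrow V^{\sharp\sharp}$ being an isomorphism requires $\dim_{\K_\mf}V_\mf<\infty$ for all $\mf\in\Supp(V)$; this holds for all five families in the DGO classification (the $d$, the word length $n$, and $\dim_{\K_\om}$ bookkeeping all give finite weight multiplicities), so the hypothesis is harmless in our setting, but I would state the lemma either with that finiteness hypothesis made explicit or with a remark that it is automatic for the modules we consider. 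The main obstacle, such as it is, is simply making sure the decomposition $V^\sharp\simeq W_1\oplus W_2$ transported through $\sharp$ really is a direct sum decomposition of $A$-modules with both summands nonzero — which reduces to the support computation above and the fact that $\sharp$ takes the projection idempotents of $W_1\oplus W_2$ to a complementary pair of idempotents in $\End_A(V^{\sharp\sharp})\cong\End_A(V)$.
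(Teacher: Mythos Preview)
Your second argument is correct and is essentially the paper's own approach: both prove that $\sharp$ takes a nontrivial direct-sum decomposition to a nontrivial one and then invoke $V^{\sharp\sharp}\simeq V$ from Proposition~\ref{eq:duality}. The paper carries out the decomposition explicitly (setting $W_j=\ker(i_j^\sharp)$ for the inclusions $i_j:U_j\hookrightarrow V$ and checking $V^\sharp=W_1\oplus W_2$ by hand), whereas you phrase it via additivity of the functor $\sharp$ and the support computation of Proposition~\ref{prop:supp}; your remark that the finite-dimensionality hypothesis on weight spaces is implicitly used is also on point.
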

\begin{proof}
We prove that if $V$ is decomposable, then so is $V^\sharp$. Then the result follows
since $V^{\sharp\sharp}\simeq V$, by Proposition \ref{eq:duality}.
Assume $V$ is decomposable and let $i_j:U_j\to V$, $j=1,2$,
be the inclusions of two submodules $U_j$ whose direct sum is $V$.
Let $W_j=\ker (i_j^\sharp)\subseteq V^\sharp$, $j=1,2$.
Let $\varphi\in W_1\cap W_2$. Then $i_1^\sharp(\varphi)=0=i_2^\sharp(\varphi)$.
Thus $\varphi(i_j(u))=0$ $\forall u\in U_j$, $j=1,2$. Since $V=i_1(U_1)+i_2(U_2)$
we deduce $\varphi=0$. Hence $W_1\cap W_2=0$.
Let $\varphi\in V^\sharp$ be arbitrary. Then $\varphi p_1+\varphi p_2=\varphi$,
where $p_j:V\to U_j$ are the projections. Also $i_1^\sharp(\varphi p_2)(v)=
(\varphi p_2)(i_1(v))=0 \forall v\in U_1$, and similarly $i_2^\sharp(\varphi p_1)=0$.
This proves that $V^\sharp=W_1+W_2$. 
\end{proof}

\subsection{Infinite orbit without breaks} \label{sec:inforbnobr}
\begin{theorem}\label{thm:inforbnobr}
Let $V=V(\om)$, where $\om$ is any infinite real orbit with $B_\om=\emptyset$.
Then $V^\sharp\simeq V$ and hence $V$ is pseudo-unitarizable.
\end{theorem}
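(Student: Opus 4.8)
The plan is to construct an explicit $A$-module isomorphism $\Phi : V \to V^\sharp$ and invoke Proposition \ref{prop:bij} to conclude that $V$ is pseudo-unitarizable. Recall that $V = V(\om) = \oplus_{\mf\in\om}\K_\mf$ with $Xv = \si(t_\mf v)$ and $Yv = \si^{-1}(v)$ for $v\in\K_\mf$, and since $\om$ is real, $\mf^\ast = \mf$ for every $\mf\in\om$, so by Proposition \ref{prop:supp} we have $\Supp(V^\sharp) = \Supp(V)^\ast = \om$ as well, and each weight space $(V^\sharp)_\mf$ is one-dimensional over $\K_\mf$ by the dual basis argument in Proposition \ref{eq:duality}. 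Since $B_\om = \emptyset$, no $t_\mf$ is zero, so $X$ and $Y$ act injectively on each weight line; this is the crucial structural feature making both $V$ and $V^\sharp$ "free" of breaks.

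First I would fix, for each $\mf\in\om$, the element $1_\mf = 1+\mf\in\K_\mf\subseteq V$ as a $\K_\mf$-basis of $V_\mf$, and let $\ph_\mf\in(V^\sharp)_\mf$ be the dual functional determined by $\ph_\mf(1_\mf) = 1+\mf$ and $\ph_\mf(V_\nf) = 0$ for $\nf\neq\mf$. Then $\{\ph_\mf\}$ is a $\K_\mf$-basis of $(V^\sharp)_\mf$ weight space by weight space. I would define $\Phi$ on the weight line $V_\mf$ by $\Phi(1_\mf) = c_\mf\,\ph_\mf$ for suitable nonzero scalars $c_\mf\in\K_\mf$ to be pinned down, and extend $\K_\mf$-linearly, hence additively over all of $V$. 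By construction $\Phi$ is bijective on each weight space (as $c_\mf\neq 0$), so if it is an $A$-module map it is automatically an isomorphism. The conditions $\Phi(rv) = r\Phi(v)$ hold for $r\in R$ by the $\K_\mf$-linearity and \eqref{eq:dualmod1}. The real content is to choose the $c_\mf$ so that $\Phi(Xv) = X\Phi(v)$ and $\Phi(Yv) = Y\Phi(v)$; using \eqref{eq:dualmod2}, \eqref{eq:dualmod3} and the explicit actions, one computes both sides on the basis vector $1_\mf$ and obtains a recursion relating $c_{\si(\mf)}$ to $\si(c_\mf)$ with a correction factor built from $\si(t_\mf)$. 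Since $\om$ is infinite, there is no consistency obstruction: picking a base point $\mf_0\in\om$, setting $c_{\mf_0} = 1$, and then propagating the recursion forward along $\si$ and backward along $\si^{-1}$ determines all $c_\mf$ uniquely, and each stays nonzero precisely because every $t_\mf\neq 0$ (no breaks).

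The main obstacle I anticipate is bookkeeping the twist by $\si$ correctly in the recursion: the dual module action \eqref{eq:dualmod2} applies $\si$ \emph{after} evaluating $\ph$ at $Yv$, while $X$ on $V$ applies $\si$ to $t_\mf v$, so the factor relating $c_{\si(\mf)}$ and $\si(c_\mf)$ is not simply $1$ but involves $\si(t_\mf)$ (or its inverse, depending on direction). One must verify that the forward recursion coming from $X$ and the backward recursion coming from $Y$ are compatible — i.e. that solving $\Phi X = X\Phi$ automatically gives $\Phi Y = Y\Phi$ — which follows because $YX = t$ acts as multiplication by $t_\mf$ on $V_\mf$ and the same relation holds in $V^\sharp$, so the two recursions are two halves of one consistent system. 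Because the orbit is infinite, I never return to a previously-assigned weight, so unlike the finite-orbit cases there is no closed-up cycle forcing a determinant or periodicity condition; this is exactly why the statement holds unconditionally here. Once the $c_\mf$ are constructed, a direct check on the generators $X$, $Y$, $r\in R$ confirms $\Phi\in\Hom_A(V,V^\sharp)$, and bijectivity on each weight space gives $\Phi$ an inverse, so $V\simeq V^\sharp$; Proposition \ref{prop:bij} then yields a non-degenerate admissible form and hence pseudo-unitarizability.
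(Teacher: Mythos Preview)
Your proposal is correct, but it takes a different route from the paper's proof. The paper argues indirectly: by Lemma~\ref{lem:indec} the dual $V^\sharp$ is indecomposable, by Proposition~\ref{prop:supp} its support is $\om^\ast=\om$, and by the DGO classification (Theorem~\ref{thm:classification}) there is only \emph{one} indecomposable weight module supported on an infinite orbit with no breaks, namely $V(\om)$ itself; hence $V^\sharp\simeq V$. Your approach instead builds the isomorphism $\Phi$ by hand via the recursion $c_{\si(\mf)}=\si(c_\mf/t_\mf)$, which is solvable without obstruction precisely because the orbit is infinite and $t_\mf\neq 0$ throughout. Your argument is more elementary in that it does not invoke the full classification theorem, and in fact it is essentially the computation the paper carries out \emph{after} the proof when determining all non-degenerate admissible forms explicitly (cf.\ the discussion leading to~\eqref{eq:Phila}). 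The paper's version, on the other hand, is shorter given that the classification is already in hand, and it establishes a template (indecomposability of the dual plus support comparison plus classification) that is reused for all the other module types in Section~\ref{sec:description}.
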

\begin{proof}
We have $\Supp(V)=\om$. By the classification theorem, there is only one indecomposable module
whose support is contained in $\om$. By Lemma \ref{lem:indec}, $V^\sharp$ is indecomposable
and by Proposition \ref{prop:supp}, $\Supp(V^\sharp)=\Supp(V)=\om$.
Hence we conclude that $V^\sharp\simeq V$. By Proposition \ref{prop:bij}, $V$ is
pseudo-unitarizable.
\end{proof}
Let $\om$ be infinite real, $B_\om=\emptyset$, $V=V(\om)$.
We now determine all non-degenerate admissible forms on $V$,
and their index in the symmetric complex case.
Let $e_0\in V_{\mf(\om)}$, $e_0\neq 0$. Let $e_0^\sharp\in V^\sharp$ be defined
by $e_0^\sharp(e_0)=1_{\mf(\om)}$ and $e_0^\sharp(V_\mf)=0\;\forall \mf\in\om, \mf\neq\mf(\om)$.
Then $e_0^\sharp$ spans $(V^\sharp)_{\mf(\om)}$ over $\K_\om$ so
any isomorphism $\Phi:V\to V^\sharp$ must satisfy $\Phi(e_0)=\la e_0^\sharp$
for some nonzero $\la\in \K_\om$. Conversely, it is easy to see that for any nonzero
 $\la\in\K_\om$ there exists
a unique isomorphism $\Phi_\la: V\to V^\sharp$ satisfying $\Phi_\la(e_0)=\la e_0^\sharp$.
The set $\{e_n:=X^ne_0,\; e_{-n-1}:=Y^{n+1}e_0\; |\; n\in\Z_{\ge 0}\}$ is a basis for $V$ over $\K_\om$
and the
corresponding $\K_\om$-form $\Psi_\la$ (which is obtained using the bijections between
$\Hom_A(V,V^\sharp)$ and admissible forms in Proposition \ref{prop:bij}
and between admissible forms and $\K_\om$-forms in Proposition \ref{prop:fieldform2})
satisfies
\begin{align}
\Psi_\la(e_n,e_m)&=0,\quad m\neq n,\nonumber\\
\label{eq:Phila}
\Psi_\la(e_n,e_n)&=\begin{cases}
t\si^{-1}(t)\cdots\si^{-n+1}(t)\la, &n\ge 0,\\
\si(t)\si^2(t)\cdots\si^{-n}(t)\la,&n<0.
\end{cases}
\end{align}
To simplify notation we use here the natural $R$-module action on $\K_\om$. For example
 $t\la$ equals the product $(t+\mf(\om))\la$ in $\K_\om$.
From the formula \eqref{eq:Phila}, and the fact that $t^\ast=t$,
we see that the adjoint form $\Psi_\la^\sharp$
is equal to $\Psi_{\overline{\la}}$.

In the case when $\K_\om\simeq\C$ and conjugation is ordinary complex conjugation, we
associate to a symmetric form $\Psi_\la$, $\la\in\mathbb{R}$,
a scalar product on $V$ defined by
$(e_k,e_l)=\sgn\big(\Psi_\la(e_k,e_l)\big)\Psi_\la(e_k,e_l)$.
Then $\Psi_\la(v,w)=(Jv,w)\;\forall v,w\in V$, where $Je_k=\sgn\big(\Psi_\la(e_k,e_k)\big)e_k$.
$J$ is an involution operator in the sense that $J^2=\Id_V$ and that it is
self-adjoint with respect to the scalar product on $V$.
Therefore, (the completion of) $V$ together with $\Psi_\la$ is a Krein space (see [KS]).
Let $V_{\pm}=\{v\in V\; :\; Jv=\pm v\}$. Then $V=V_+\oplus V_-$.
We claim that any pair $(\dim V_+,\dim V_-)$ can occur. In fact,
consider
the sequence $(i_n)_{n\in\Z}$ where $i_n=\sgn\big(\Psi_\la(e_n,e_n)\big)$.
Then any sequence $(i_n)_{n\in\Z}\in\{1,-1\}^{\Z}$ can occur.
Indeed, let $R=\C[t_n \;|\; n\in\Z]$ be a polynomial
algebra in infinitely many indeterminates $t_n$. Let $t=t_0$, define $t_n^\ast=t_n$,
 $i^\ast=-i$ and extend $\ast$ to an $\mathbb{R}$-algebra automorphism of $R$. Let $\si(t_n)=t_{n+1}$
and let $\mf$ be the maximal ideal generated by $t_n-a_n, \;n\in\Z$, where $a_n\in\mathbb{R}$
are given by $a_n=i_{-n}i_{-n+1}, n\in\Z$.
Let $\om$ be the orbit containing $\mf$ and set
 $\mf(\om)=\mf$. The orbit $\om$ is infinite, real, and $B_\om=\emptyset$.
Then the sequence associated to the form $\Psi_{i_0}$ on $V(\om)$
equals $(i_n)_{n\in\Z}$.

\subsection{Infinite orbit with breaks}
\begin{theorem}\label{thm:inforbwbr}
Let $V=V(\om,S,I_X)$, where $\om\in\Om$ is infinite and real, $|B_\om|>0$,
$S\subseteq\om$ is a supportive interval, and $I_X\subseteq I(S)$.
Then $V^\sharp\simeq V(\om,S,I(S)\backslash I_X)$.
In particular $V$ is pseudo-unitarizable
iff $I(S)=\emptyset$ which is equivalent to $V$ being simple.
\end{theorem}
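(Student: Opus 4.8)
The plan is to compute $V^\sharp$ explicitly from the module structure \eqref{eq:FinOrbBr1X}--\eqref{eq:FinOrbBr1Y}, identify it with a module of the same family, and then read off the pseudo-unitarizability criterion. First I would note that, by Proposition \ref{prop:supp} and the reality of $\om$, we have $\Supp(V^\sharp)=\Supp(V)^\ast=S^\ast=S$, and that for each $\mf\in S$ the weight space $(V^\sharp)_\mf$ is $1$-dimensional over $\K_\mf$ (using that $\dim_{\K_\mf}V_\mf=1$ and considering dual bases, cf.\ the argument in Proposition \ref{eq:duality}). By Lemma \ref{lem:indec}, $V^\sharp$ is indecomposable. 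Hence, by the classification theorem \ref{thm:classification}, $V^\sharp$ must itself be of the form $V(\om,S',I_X')$ for some supportive interval $S'$ with $\dim$ one in each weight space, and since $\Supp=S$ we get $S'=S$; it only remains to pin down $I_X'$, i.e.\ to decide on which inner breaks $X$ acts nontrivially on $V^\sharp$.

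The key computation is the following: for $\mf\in I(S)$ (so $\mf$ is a break with $\si(\mf)\in S$), $X$ acts on the line $(V^\sharp)_\mf$ by the formula $(X\ph)(v)=\si(\ph(Yv))$. Choosing a dual basis $\{e_\mf^\sharp\}$ with $e_\mf^\sharp$ spanning $(V^\sharp)_\mf$ and evaluating $X e_\mf^\sharp$ against the basis vector of $V_{\si(\mf)}$, I would show that $X e_\mf^\sharp \neq 0$ in $V^\sharp$ precisely when $Y$ acts nontrivially from $V_{\si(\mf)}$ to $V_\mf$ in $V$. By the definition of $V(\om,S,I_X)$, for $\mf\in I(S)$ the map $Y: V_{\si(\mf)}\to V_\mf$ is nonzero iff $\mf\in I(S)\setminus I_X$ (since $\si^{-1}(\si(\mf))=\mf\in I(S)$, the relevant case is the second branch of \eqref{eq:FinOrbBr1Y}, active exactly when $\mf\notin I_X$). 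Therefore $X$ acts nontrivially on $(V^\sharp)_\mf$ iff $\mf\in I(S)\setminus I_X$, which identifies $I_X' = I(S)\setminus I_X$ and gives $V^\sharp\simeq V(\om,S,I(S)\setminus I_X)$. I should also double-check the boundary conditions defining supportiveness are preserved — but these only involve the endpoints of $S$ and the fact that $\si^{-1}(\nf_0)\in B_\om$, which is a condition on $\om$ and $S$ alone, unaffected by passing to the dual.

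For the final sentence: by Proposition \ref{prop:bij}, $V$ is pseudo-unitarizable iff $V\simeq V^\sharp$, i.e.\ iff $V(\om,S,I_X)\simeq V(\om,S,I(S)\setminus I_X)$. By part (iii) of Theorem \ref{thm:classification} there are no nontrivial isomorphisms among the modules $V(\om,S,I_X)$ for infinite $\om$ (the only listed isomorphisms are \eqref{eq:DGOiso1} and \eqref{eq:DGOiso2}, both for finite orbits), so this forces $I_X = I(S)\setminus I_X$, which is possible only when $I(S)=\emptyset$ (a set equal to its own complement in a finite set must be empty). Finally, $I(S)=\emptyset$ is equivalent to $V$ being simple by Theorem \ref{thm:DGOthm2}, where $V(\om,S,\emptyset)$ with $I(S)=\emptyset$ is listed among the simple modules.

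The main obstacle I anticipate is the bookkeeping in the second step: correctly matching the index shifts $\si$ vs.\ $\si^{-1}$ in \eqref{eq:dualmod2}--\eqref{eq:dualmod3} against the case distinctions in \eqref{eq:FinOrbBr1X}--\eqref{eq:FinOrbBr1Y}, and verifying that the nonzero scalar by which $X$ acts on $(V^\sharp)_\mf$ (when nonzero) is exactly what is required for $V^\sharp$ to be in the normalized form $V(\om,S',I_X')$ rather than merely isomorphic to it after rescaling basis vectors — though the latter is harmless since rescaling weight-space generators is an automorphism respecting the $V(\om,S,I_X)$ parametrization.
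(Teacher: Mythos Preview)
Your proposal is correct and follows essentially the same route as the paper: use Lemma~\ref{lem:indec} and Proposition~\ref{prop:supp} to place $V^\sharp$ in the family $V(\om,S,J)$, then identify $J$ by computing when $X$ is nonzero on $(V^\sharp)_\mf$ for $\mf\in I(S)$ via the duality formula $(X\varphi)(v)=\si(\varphi(Yv))$.

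Two minor remarks. First, your equation references \eqref{eq:FinOrbBr1X}--\eqref{eq:FinOrbBr1Y} point to the \emph{finite} orbit (first kind) modules; the relevant $X,Y$ action for $V(\om,S,I_X)$ is the unlabeled display in Section~\ref{sec:DGOclass} for infinite orbits with breaks---the structure is analogous, so your argument goes through verbatim once you cite the correct definition. Second, for the ``in particular'' clause the paper takes a slightly different (and shorter) path: rather than invoking uniqueness in Theorem~\ref{thm:classification}(iii) to force $I_X=I(S)\setminus I_X$, it applies Proposition~\ref{prop:semisimple} (all weight spaces are one-dimensional, so $V^\sharp\simeq V$ forces $V$ semisimple, hence simple since indecomposable). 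Your argument via uniqueness is equally valid; note that your parenthetical ``in a finite set'' is unnecessary, since $A=B\setminus A$ with $A\subseteq B$ already gives $A=A\cap(B\setminus A)=\emptyset$ and hence $B=\emptyset$, regardless of cardinality.
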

\begin{proof}
If $V^\sharp\simeq V$, then Proposition \ref{prop:semisimple}
and the fact that $V$ is indecomposable imply that $V$ must be simple.
The converse follows when we prove the more general statement that
$V^\sharp\simeq V(\om,S,I(S)\backslash I_X)$.

By Lemma \ref{lem:indec}, $V^\sharp$ is indecomposable
and by Proposition \ref{prop:supp} and that $\om$ is real,
$\Supp(V^\sharp)=\Supp(V)=S$.
So by the Drozd-Guzner-Ovsienko classification theorem, as stated
in Theorem \ref{thm:classification} in the present paper,
we deduce that $V^\sharp\simeq V(\om,S,J)$
for some subset $J$ of $I(S)$. It remains to prove
that, for $\mf\in I(S)$, $X(V^\sharp)_\mf\neq 0$ iff $XV_\mf=0$.

Suppose $\mf\in I(S)$ with $X(V^\sharp)_\mf=0$.
Let $\varphi\in (V^\sharp)_\mf$ be nonzero. Then, by Proposition \ref{prop:dualwtny},
 $\varphi |_{V_\nf}=0$
if $\nf\neq \mf$ and $\varphi(v)=1_\mf$ for some $v\in V_\mf$.
Let $u\in V_{\si(\mf)}$ be nonzero. We have
 $0=(X\varphi)(u)=\si\big(\varphi(Yu)\big)$. Thus $Yu=0$. Thus $u=Xv$ for some
nonzero $v\in V_\mf$, otherwise $V$ would be decomposable into
 $\big(\oplus_{n\le 0} V_{\si^n(\mf)}\big) \oplus \big(\oplus_{n>0} V_{\si^n(\mf)}\big)$.
This proves that $\mf\in I_X$, i.e. $XV_\mf\neq 0$. The converse is similar.

We conclude that indeed $V^\sharp\simeq V$ iff $I(S)=\emptyset$. Thus by Proposition
\ref{prop:bij}, $V$ is pseudo-unitarizable iff $I(S)=\emptyset$.
By Theorem \ref{thm:DGOthm2}, $V(\om,S,I_X)$ is simple iff $I(S)=\emptyset$.
\end{proof}

Let $\om\in\Om$ be real, infinite, $|B_\om|>0$. In this case $\om$ is torsion
trivial and thus there is a bijection between admissible forms and
admissible $\K_\om$-forms.
We now determine all possible non-degenerate admissible $\K_\om$-forms on
$V(\om,S,\emptyset)$ where $S$ is a supportive interval in $\om$
with $I(S)=\emptyset$.

 The subset $S\subseteq\om$ has either a maximal
or a minimal element (otherwise it would contain an inner break).
Assume $S$ has a maximal element $\nf_1$.
It is a break since $S$ is supportive. We can assume that $\mf(\om)=\nf_1$.
Let $e_0\in V_{\mf(\om)}$ be nonzero and $e_0^\sharp\in(V^\sharp)_{\mf(\om)}$
be such that $e_0^\sharp(e_0)=1_{\mf(\om)}$.
For $\la\in\K_\om$ there is a unique isomorphism
 $\Phi_\la:V\to V^\sharp$ given by $\Phi_\la(e_0)=\la e_0^\sharp$.
If $S$ has no minimal element,
 $V$ has a basis $\{e_{-n}:=Y^n e_0\; |\; n\ge 0\}$.
If $S$ has a minimal element $\nf_0$, then $\si^{-1}(\nf_0)\in B_\om$ and
 $V$ has a basis $\{e_{-n}:=Y^n e_0\; |\; 0\le n\le N-1\}$
where $\si^{-N}(\mf(\om))=\si^{-1}(\nf_0)$. The corresponding $\K_\om$-form $\Psi_\la$
calculated on the basis vectors gives
\begin{equation} \label{eq:Phila2}
\Phi_\la(e_{-n},e_{-m})=\si(t)\si^2(t)\cdots\si^n(t)\la\delta_{n,m}
\end{equation}
for $n,m\ge 0$.
If $S$ has no maximal element, but a minimal element $\nf_0$, then $\si^{-1}(\nf_0)\in B_\om$.
We choose $\mf(\om)=\nf_0$ in this case.
Then $V$ has a basis $\{e_n:=X^n e_0\;|\; n\ge 0\}$ and the corresponding
$\K_\om$-form $\Psi_\la$
satisfies
\begin{equation}\label{eq:Phila3}
\Psi_\la(e_n,e_m)=t\si^{-1}(t)\cdots\si^{-n+1}(t)\la\delta_{n,m}
\end{equation}
for $n,m\ge 0$.
We see that $\Psi_\la$ is symmetric iff $\overline{\la}=\la$.

\subsection{Finite orbit without breaks}
In this section we fix a finite orbit $\om\in\Om$ with $B_\om=\emptyset$.
In Theorem \ref{thm:Vomfdual}
we will describe the dual modules $V(\om,f)^\sharp$ for
indecomposable $f\in \K_\om[x,x^{-1};\tau]$.
First we make some preliminary observations.
Let $p=|\om|$ and put $P=\K_\om[x,x^{-1};\tau]$.
\begin{proposition}\label{prop:Vomfiso}
Let $B$ be the subalgebra of $A$ generated by $X^p, Y^p$ and all $r\in R$.
Let $I=B\mf(\om)B$ be the ideal in $B$ generated by $\mf(\om)$. Then
there is a ring isomorphism
 \[\psi : B/I \to P \]
given by
\[\psi(X^p + I) = \xi\cdot x,\quad
  \psi(Y^p + I) = x^{-1},\quad
  \psi(r + I) = r_{\mf(\om)} \quad\text{for $r\in R$}, \]
where
\begin{equation}
 \xi=\big(\si(t)\si^2(t)\cdots \si^p(t)\big)_{\mf(\om)}.
\end{equation}
\end{proposition}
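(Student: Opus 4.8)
The plan is to verify directly that the three assignments $X^p+I\mapsto \xi x$, $Y^p+I\mapsto x^{-1}$, $r+I\mapsto r_{\mf(\om)}$ give a well-defined ring homomorphism $B\to P$ with kernel exactly $I$, and then check bijectivity. First I would record the commutation relations that hold in $B$ among $X^p$, $Y^p$ and $R$. From \eqref{eq:gwarels} one computes $X^p r = \si^p(r) X^p = \tau(r) X^p$ and similarly $Y^p r = \tau^{-1}(r) Y^p$ for all $r\in R$; also $Y^p X^p = t\,\si^{-1}(t)\cdots\si^{-p+1}(t)=:u$ and $X^p Y^p = \si(t)\si^2(t)\cdots\si^p(t)=:v$, two selfadjoint elements of $R$, with $v=\si^p(u)=\tau(u)$. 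These are the relations that must be matched in $P$.

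Next I would observe that in $P=\K_\om[x,x^{-1};\tau]$ the candidate images satisfy the analogous identities modulo $\mf(\om)$: $(\xi x)\, r_{\mf(\om)} = \xi \tau(r)_{\mf(\om)} x = \tau(r)_{\mf(\om)}(\xi x)$ since $\K_\om$ is commutative, $x^{-1} r_{\mf(\om)} = \tau^{-1}(r)_{\mf(\om)} x^{-1}$, the product $x^{-1}\cdot \xi x = \tau^{-1}(\xi)_{\mf(\om)} = u_{\mf(\om)}$ (using $\xi = v_{\mf(\om)}$ and $v=\tau(u)$), and $(\xi x)\cdot x^{-1} = \xi = v_{\mf(\om)}$. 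Thus the defining relations of $B$ are respected after reducing coefficients mod $\mf(\om)$, so the assignment extends to a ring homomorphism $\bar\psi:B\to P$; since $\bar\psi(\mf(\om))=0$ and $I$ is the two-sided ideal generated by $\mf(\om)$, $\bar\psi$ factors through $B/I$, giving $\psi$.

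For surjectivity: every element of $P$ is a $\K_\om$-linear combination of the $x^n$, $n\in\Z$, and $\psi$ hits $1$ (image of $1+I$), $\xi x$, $x^{-1}$; since $\xi = v_{\mf(\om)}$ is a \emph{unit} of $\K_\om$ (because $B_\om=\emptyset$ means $t\notin\mf$ for every $\mf\in\om$, hence each $\si^i(t)_{\mf(\om)}\neq 0$ and $\K_\om$ is a field), we recover $x = \xi^{-1}\psi(X^p+I)$ and hence every $x^n$ and every $\la x^n$ with $\la\in\K_\om$; so $\psi$ is onto. For injectivity I would use a normal form / PBW-type argument for $B$: the set $B$ is $\Z$-graded by $\deg X^p = p$, $\deg Y^p=-p$, and in degree $kp$ it is spanned over $R$ by $X^{kp}$ ($k\ge 0$) resp. $Y^{-kp}$ ($k<0$); hence $B/I$ is spanned over $\K_\om$ by the images of $X^{kp}$ and $Y^{kp}$, and these map to nonzero $\K_\om$-multiples of $x^k$ and $x^{-k}$, which are $\K_\om$-independent in $P$. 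Therefore $\psi$ carries a spanning set to a basis, so it is injective, completing the proof that $\psi$ is an isomorphism.

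The main obstacle I expect is the injectivity step, i.e. establishing the correct normal form for $B/I$: one must be sure that $B$ has no relations beyond those coming from \eqref{eq:gwarels}, so that modulo $I$ its graded pieces are genuinely one-dimensional over $\K_\om$ and no collapsing occurs. This is exactly where the hypothesis $B_\om=\emptyset$ is essential — it guarantees $\xi$ and all the intermediate products $\si^i(t)_{\mf(\om)}$ are invertible, so that $X^p$ and $Y^p$ act invertibly and the graded components do not degenerate; without it the map $\psi$ as written would not even land in the Laurent ring. Everything else is a routine but slightly tedious bookkeeping of $\si$-twists, which I would organize by first proving the four relations listed above and then quoting them.
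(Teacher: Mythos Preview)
Your proposal is correct and follows essentially the same route as the paper: verify that the assignments respect the relations among $X^p$, $Y^p$, $R$ to get a homomorphism $B\to P$ vanishing on $I$, then use the $\Z$-grading to check injectivity and the invertibility of $\xi$ (from $B_\om=\emptyset$) for surjectivity. The paper streamlines your relation-checking step by observing once and for all that $B\cong R(\si^p,t')$ is itself a GWA, which immediately supplies the needed presentation and dissolves the worry you flag in your final paragraph about hidden relations; apart from that packaging, the arguments coincide.
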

\begin{proof}
It is straightforward to show that $B$ is isomorphic to 
the generalized Weyl algebra $R(\si^p, t')$ where $t'=t\si^{-1}(t)\cdots \si^{-p+1}(t)$ and that there
is a ring homomorphism $B\to P$ given by $X^p\mapsto \xi x$, $Y^p\mapsto x^{-1}$, 
and $r\mapsto r_{\mf(\om)}$ for all $r\in R$. Trivially $\mf(\om)$, hence
$I$, is contained in the kernel of that map.
The induced map $B/I\to P$ is the map $\psi$.
Assume $b+I\in B/I$ is in the kernel of $\psi$. Since both rings
involved, and $\psi$, are $\Z$-graded in a natural way, we can assume
$b=rX^{pk}$ or $b=rY^{pk}$, $k\ge 0$. We immediately get $k=0$, $r\in\mf(\om)$.
So $\psi$ is injective. That $\psi$ is surjective is easy to see.
\end{proof}
Let $V=V(\om,f)$, where
$f=\al_0+\al_1 x+\cdots +\al_d x^d\in P$, ($\al_0\neq 0, \al_d\neq 0$),
is indecomposable.
Since $\om$ is an orbit of length $p$, we have
$BV_{\mf(\om)}\subseteq V_{\mf(\om)}$. Also $IV_{\mf(\om)}=0$.
Thus $V_{\mf(\om)}$ becomes a module
over $B/I$ and, via the isomorphism in Proposition \ref{prop:Vomfiso},
a $P$-module. The following proposition describes this $P$-module.
\begin{proposition}\label{prop:Vomfchar}
\[V_{\mf(\om)}\simeq P/Pf\]
as $P$-modules.
\end{proposition}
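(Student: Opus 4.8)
The plan is to transport the natural $B/I$-module structure on $V_{\mf(\om)}$ to a $P$-module structure via the isomorphism $\psi$ of Proposition \ref{prop:Vomfiso}, compute the resulting action of $x$ explicitly on the $\K_\om$-vector space $V_{\mf(\om)}=(\K_\om)^d$, and then identify the outcome with $P/Pf$.

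First I would record why $V_{\mf(\om)}$ is a $B/I$-module, and hence (via $\psi$) a $P$-module: $B$ is generated by $X^p$, $Y^p$ and $R$, each of which maps $V_{\mf(\om)}$ into itself because $\si^p(\mf(\om))=\mf(\om)$, while $I=B\mf(\om)B$ kills $V_{\mf(\om)}$ since $\mf(\om)V_{\mf(\om)}=0$. Because $B_\om=\emptyset$, the element $\xi$ is a unit of the field $\K_\om$, so under $\psi$ the generator $x$ acts on $V_{\mf(\om)}$ by $u\mapsto\xi^{-1}(X^pu)$.

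The main step is to compute $X^p$ on $V_{\mf(\om)}$. Applying $X$ one weight space at a time around the orbit, using $Xr=\si(r)X$ together with \eqref{eq:finorbnobrX}, an easy induction on $k$ shows that for $u\in V_{\mf(\om)}$,
\[X^ku=\bigl(\si(t)\si^2(t)\cdots\si^k(t)\bigr)_{\si^k(\mf(\om))}\;\si^k(F_f)\;\si^k(u),\]
where $\si^k(F_f)$ denotes $F_f$ with $\si^k$ applied to each entry. Taking $k=p$ and using that $\si^p$ restricts to $\tau$ on $\K_\om=\K_{\mf(\om)}$, this becomes $X^pu=\xi\,\tau(F_f)\,\tau(u)$, so $x$ acts on $(\K_\om)^d$ by $u\mapsto\tau(F_f)\,\tau(u)$. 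Let $v_0$ be the first standard basis vector. Since $\tau(F_f)$ is again a companion-type matrix, $x^jv_0$ is the $(j{+}1)$-st standard basis vector for $0\le j\le d-1$; hence $v_0$ generates $V_{\mf(\om)}$ as a $P$-module, and reading off $x^dv_0$ from the last column of $\tau(F_f)$ yields $\tau(f)\,v_0=0$. As $\dim_{\K_\om}P/P\tau(f)=d=\dim_{\K_\om}V_{\mf(\om)}$, the annihilator of $v_0$ in $P$ is exactly $P\tau(f)$, so $V_{\mf(\om)}\simeq P/P\tau(f)$.

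It remains to pass from $\tau(f)$ to $f$. One checks that $\tau(f)x=xf$ in $P$, so $g+P\tau(f)\mapsto gx+Pf$ is a well-defined homomorphism of left $P$-modules $P/P\tau(f)\to P/Pf$; it is surjective because $x$ is a unit and injective for dimension reasons, hence an isomorphism. Composing gives $V_{\mf(\om)}\simeq P/Pf$. The only genuine difficulty is the bookkeeping in the $X^p$ computation — tracking in which residue field $\K_{\si^k(\mf(\om))}$ each factor lives and how the powers of $\si$ pile up — together with the small but easy-to-miss point that this computation produces $P/P\tau(f)$ rather than $P/Pf$ on the nose, which is what forces the final paragraph.
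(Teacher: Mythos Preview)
Your proof is correct and follows essentially the same route as the paper: compute the action of $\xi^{-1}X^p$ on $V_{\mf(\om)}$, show that the first standard basis vector is a cyclic generator annihilated by $\tau(f)$, and then use the similarity $P/P\tau(f)\simeq P/Pf$. The only cosmetic difference is that you package the $X^k$-computation into a single closed formula and spell out the explicit isomorphism $g+P\tau(f)\mapsto gx+Pf$, whereas the paper computes $X^p e_i$ basis vector by basis vector and simply cites similarity at the end.
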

\begin{proof}
Let $e_i=(0,\ldots,\overset{i}{1},\ldots, 0)\in V_{\mf(\om)}=(\K_\om)^d$.
By \eqref{eq:finorbnobrX}, if $1\le i<d$,
\begin{align*}
X^p e_i &= X^{p-1}\si(F_f t_{\mf(\om)} e_i) = \si^p(t_{\mf(\om)})X^{p-1}\si(e_{i+1})= \\
&=\si^p(t_{\mf(\om)})\si^{p-1}(t_{\si(\mf(\om))})X^{p-2}\si^2(e_{i+1})=\cdots=\\
&=\xi\cdot e_{i+1}.
\end{align*}
Thus
\begin{equation}\label{eq:VomfXpke1}
 (\xi^{-1}X^p)^k e_1= e_{k+1} \quad\text{for $k=0,1,\ldots,d-1$.}
\end{equation}
Also we have, by \eqref{eq:finorbnobrX},
\begin{equation}\label{eq:VomfXpked}
 \xi^{-1} X^p e_d=\sum_{k=0}^{d-1}\tau(-\al_k / \al_d)e_{k+1}.
\end{equation}
Using \eqref{eq:VomfXpke1} and \eqref{eq:VomfXpked} we get
\begin{align}\nonumber
\tau(f).e_1 &= \sum_{k=0}^d \tau(\al_k) x^k . e_1 =
 \sum_{k=0}^d \tau(\al_k) (\xi^{-1}X^p)^k e_1 = \\
 &= \sum_{k=0}^{d-1}\tau(\al_k) e_{k+1}+
\tau(\al_d)\sum_{k=0}^{d-1} \tau(-\al_k /\al_d )e_{k+1}=0.
 \label{eq:Vomffe1}
\end{align}
From \eqref{eq:VomfXpke1} and that
$\{e_1,\ldots,e_d\}$ generates $V_{\mf(\om)}$ as an $R$-module,
we see that the vector $e_1$ generates $V_{\mf(\om)}$ as a $P$-module.
By \eqref{eq:Vomffe1}, we get an epimorphism of $P$-modules
\begin{align*}
\psi: P/P\tau(f) &\to V_{\mf(\om)}\\
      h+P\tau(f) &\mapsto h.e_1
\end{align*}
Since $\dim_{\K_\om} V_{\mf(\om)}=d=\dim_{\K_\om} P/P\tau(f)$,
we deduce that $\psi$ is an isomorphism. Since $f$ is similar
to $\tau(f)$, it follows that $V_{\mf(\om)}\simeq P/Pf$.
\end{proof}
Now we come to the main result in this section.
\begin{theorem} \label{thm:Vomfdual}
Let $V=V(\om,f)$, where $\om$ is a finite and real orbit with $B_\om=\emptyset$
and $f=\al_0+\al_1x+\cdots+a_dx^d\in P=\K_\om[x,x^{-1}; \tau]$, $\al_0\neq 0\neq \al_d$,
is indecomposable. Then
\[V(\om,f)^\sharp\simeq V(\om,f^\sharp)\]
with
\begin{equation}
f^\sharp = \sum_{k=0}^d \{k\}_\xi\cdot \tau^k (\overline{\al_{d-k}})\cdot x^k,
\end{equation}
where
\begin{equation}
 \{k\}_\xi = \xi\tau(\xi)\cdots\tau^{k-1}(\xi) \quad\text{for $k\ge 0$},
\end{equation}
and
\begin{equation}
 \xi = \big(\si(t)\si^2(t)\cdots\si^p(t)\big)_{\mf(\om)}.
\end{equation}
In particular, $V$ is pseudo-unitarizable iff $f$ is similar to $f^\sharp$ in $P$.
\end{theorem}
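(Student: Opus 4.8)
The strategy is to identify $V^\sharp$ among the DGO list by computing the $P$-module structure on its weight space $(V^\sharp)_{\mf(\om)}$, mirroring the proof of Proposition \ref{prop:Vomfchar}. First I would observe that $\om$ real and $B_\om=\emptyset$ together with Lemma \ref{lem:indec} and Proposition \ref{prop:supp} force $V^\sharp\simeq V(\om,g)$ for some indecomposable $g\in P$; the task is to pin down the similarity class of $g$. By Proposition \ref{prop:Vomfchar} applied to $V^\sharp$, it suffices to show $(V^\sharp)_{\mf(\om)}\simeq P/Pf^\sharp$ as a module over $B/I\cong P$ (via $\psi$ of Proposition \ref{prop:Vomfiso}), because then $g$ is similar to $f^\sharp$ and \eqref{eq:DGOiso1} gives the claim.

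Next I would set up explicit dual bases. Fix $e_1,\dots,e_d\in V_{\mf(\om)}$ as in Proposition \ref{prop:Vomfchar}, so $e_1$ generates $V_{\mf(\om)}$ over $P$ and $\tau(f).e_1=0$. Let $e_1^\sharp,\dots,e_d^\sharp\in(V^\sharp)_{\mf(\om)}$ be the dual $\K_\om$-basis, i.e. $e_i^\sharp(e_j)=\delta_{ij}1_{\mf(\om)}$ and $e_i^\sharp(V_\nf)=0$ for $\nf\neq\mf(\om)$. The heart of the computation is to work out how $X^p$ and $Y^p$ act on the $e_i^\sharp$ using the dual-module formulas \eqref{eq:dualmod2}, \eqref{eq:dualmod3}: since $(X^p\ph)(v)=\si^p(\ph(Y^pv))=\tau(\ph(Y^pv))$ and $(Y^p\ph)(v)=\tau^{-1}(\ph(X^pv))$, the action of $X^p$ (resp. $Y^p$) on $(V^\sharp)_{\mf(\om)}$ is obtained by transposing the matrix of $Y^p$ (resp. $X^p$) on $V_{\mf(\om)}$ and twisting by $\tau^{\pm 1}$. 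Concretely, from $\xi^{-1}X^pe_i=e_{i+1}$ for $i<d$ and \eqref{eq:VomfXpked}, one reads off the matrix of $Y^p$ on $V_{\mf(\om)}$ (it is the inverse companion-type matrix scaled by $\xi$), transposes and $\tau^{-1}$-twists it to get the matrix of $X^p$ on $(V^\sharp)_{\mf(\om)}$, and then identifies the characteristic data. The bookkeeping of the scalar $\xi$ and its $\tau$-iterates is exactly what produces the factor $\{k\}_\xi$ and the twist $\tau^k$ in the formula for $f^\sharp$; the conjugation $\overline{\al_{d-k}}$ enters because $e_i^\sharp$ is defined via the $\ast$-twisted pairing \eqref{eq:dual2}, so the coefficients of $f$ get conjugated, and the reversal $d-k$ comes from transposing (which reverses the companion matrix).

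Having produced an explicit generator $\ph_0\in(V^\sharp)_{\mf(\om)}$ (the appropriate one of $e_1^\sharp$ or $e_d^\sharp$) annihilated by $\tau(f^\sharp)$ — or more precisely by some element similar to $f^\sharp$ in $P$ — I would conclude by the dimension count $\dim_{\K_\om}(V^\sharp)_{\mf(\om)}=d$ (Proposition \ref{eq:duality} or \ref{prop:supp} combined with duality of weight spaces) that $(V^\sharp)_{\mf(\om)}\simeq P/Pf^\sharp$, hence $V^\sharp\simeq V(\om,f^\sharp)$ by Proposition \ref{prop:Vomfchar} and Theorem \ref{thm:classification}(iii). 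The final sentence then follows immediately: $V\simeq V^\sharp$ iff $V(\om,f)\simeq V(\om,f^\sharp)$, which by the isomorphism criterion \eqref{eq:DGOiso1} in Theorem \ref{thm:classification}(iii) holds iff $f$ and $f^\sharp$ are similar in $P$, and by Proposition \ref{prop:bij} this is equivalent to $V$ being pseudo-unitarizable.

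\textbf{Main obstacle.} The delicate point is the transpose-and-twist computation of the $X^p$-action on $(V^\sharp)_{\mf(\om)}$ and the resulting verification that the annihilator of the chosen generator is similar to $\tau(f^\sharp)$ (not merely to some unnamed indecomposable of degree $d$). One must be careful that $f^\sharp$ as written really is indecomposable with nonzero constant and leading coefficients — this needs $\al_0\neq 0\neq\al_d$ and the invertibility of $\xi$, which holds since $B_\om=\emptyset$ means $t\notin\mf$ for all $\mf\in\om$ — and that the identities $a^\ast$-twisting, $\tau$-twisting, and index-reversal compose in exactly the way encoded by $\{k\}_\xi\tau^k(\overline{\al_{d-k}})$. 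Getting the normalization of $\xi$ consistent between Proposition \ref{prop:Vomfiso} and the dual side is where a sign or a shift could easily slip in, so I would double-check it by testing the formula on the degree-one case $f=\al_0+\al_1x$.
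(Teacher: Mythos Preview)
Your plan is correct and matches the paper's proof essentially step for step: the paper reduces via Lemma \ref{lem:indec}, Proposition \ref{prop:supp}, and Proposition \ref{prop:Vomfchar} to computing $(V^\sharp)_{\mf(\om)}$ as a $P$-module, introduces the dual basis $e_i^\sharp$, computes $X^p e_i^\sharp$ from $Y^p e_k$ exactly as you describe (transpose, conjugate, $\tau$-twist), and then proves by induction an explicit formula expressing $e_{n+1}^\sharp$ as a combination of $X^{pk}e_1^\sharp$, showing that $e_1^\sharp$ is a cyclic generator annihilated by an element similar to $f^\sharp$. The only detail you left open --- which of $e_1^\sharp$ or $e_d^\sharp$ to take --- is resolved in the paper by taking $e_1^\sharp$.
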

\begin{proof}
By Lemma \ref{lem:indec} and Proposition \ref{prop:supp},
$V^\sharp$ is indecomposable and the support $\Supp(V^\sharp)=\om$. So
by Theorem \ref{thm:classification}, we know that
$V^\sharp\simeq V(\om,h)$ for some $h\in P$.
Then by Proposition \ref{prop:Vomfchar}, $(V^\sharp)_{\mf(\om)}\simeq P/Ph$.
Thus, it is enough to prove that
$(V^\sharp)_{\mf(\om)}\simeq P/Pf^\sharp$ as $P$-modules,
because then $h$ is similar to $f^\sharp$ which implies that $V^\sharp\simeq V(\om,f^\sharp)$
by the isomorphism \eqref{eq:DGOiso1}. Moreover, then it follows by Proposition \ref{prop:bij} and the isomorphism \eqref{eq:DGOiso1} that  $V$ is pseudo-unitarizable iff $f$ is
similar to $f^\sharp$ in $P$.

For this,
let $e_i=(0,\ldots,\overset{i}{1},\ldots, 0)\in V_{\mf(\om)}=(\K_\om)^d$,
and define $e_i^\sharp\in V^\sharp$ by $e_i^\sharp(V_\nf)=0$ for $\nf\in\om$,
$\nf\neq\mf(\om)$ and $e_i^\sharp(e_k)=\delta_{ik}\cdot 1_{\mf(\om)}$
for $i,k=1,\ldots d$. Since $\om$ is real, $e_i^\sharp\in (V^\sharp)_{\mf(\om)}$.
By relation \eqref{eq:finorbnobrY},
\begin{equation}
Y^p e_k = \begin{cases}
e_{k-1},& k>1,\\
F_f^{-1} e_1,& k=1.
\end{cases}
\end{equation}
It is easy to check that
\begin{equation}
F_f^{-1}e_1=-\al_0^{-1} (\al_1 e_1+\al_2 e_2+\cdots \al_d e_d).
\end{equation}
Thus for any $i,k=1,\ldots,d$,
\begin{align}
\nonumber
(X^p e_i^\sharp)(e_k)=\tau\big(e_i^\sharp(Y^pe_k)\big)&=
\begin{cases}
\delta_{i,k-1}\cdot 1_{\mf(\om)},& k>1,\\
\tau(-\overline{\al_i}/\overline{\al_0})\cdot 1_{\mf(\om)},& k=1,
\end{cases}\\
\label{eq:VomfXpei}
&=\big(e_{i+1}^\sharp - \tau (\overline{\al_i}/\overline{\al_0})\cdot e_1^\sharp\big)(e_k)
\end{align}
with the convention that $e_i^\sharp=0$ for $i>d$.
Let also $\al_i=0$ for $i>d$. We claim that
\begin{equation}\label{eq:Vomfdualann}
\sum_{k=0}^n \tau^{k+1}\big(\overline{\al_{n-k}} / \overline{\al_0} \big)\cdot X^{pk}
 e_1^\sharp = e_{n+1}^\sharp ,
\quad \text{for all $n\ge 0$.}
\end{equation}
We prove this by induction on $n$. For $n=0$ it is trivial.
Assume that
\[\sum_{k=0}^{n-1} \tau^{k+1}\big(\overline{\al_{n-1-k}} / \overline{\al_0}\big)\cdot X^{pk}
e_1^\sharp = e_n^\sharp .\]
Apply $X^p$ to both sides to get
\[\sum_{k=0}^{n-1} \tau^{k+2}\big(\overline{\al_{n-1-k}} / \overline{\al_0}\big)\cdot X^{p(k+1)}
e_1^\sharp = X^p e_n^\sharp .\]
Use that, by \eqref{eq:VomfXpei}, $X^p e_n^\sharp=e_{n+1}^\sharp-\tau(\overline{\al_n}/\overline{\al_0})\cdot e_1^\sharp$
in the right hand side,
add $\tau(\overline{\al_n}/\overline{\al_0})\cdot e_1^\sharp$
to both sides, and replace $k$ by $k-1$ in the sum in the left hand side to obtain
\[\sum_{k=1}^{n} \tau^{k+1}\big(\overline{\al_{n-k}} / \overline{\al_0} \big)\cdot X^{pk}
e_1^\sharp + \tau\big(\overline{\al_n} / \overline{\al_0}\big)\cdot
  e_1^\sharp = e_{n+1}^\sharp.\]
This proves \eqref{eq:Vomfdualann}.
From \eqref{eq:Vomfdualann} we see that
$e_1^\sharp$ generates $(V^\sharp)_{\mf(\om)}$ as a $P$-module
and that $g.e_1^\sharp=0$, where
\[g=\sum_{k=0}^d \tau^{k+1}(\overline{\al_{d-k}}/\overline{\al_0}) (\xi x)^k =
\sum_{k=0}^d \xi\tau(\xi)\cdots\tau^{k-1}(\xi)\cdot\tau^{k+1}(\overline{\al_{d-k}}/\overline{\al_0}) x^k
 \in P.\]
Thus, as in the proof of Proposition \ref{prop:Vomfchar}, 
 $(V^\sharp)_{\mf(\om)}\simeq P/Pg$ as $P$-modules.
Moreover, one verifies that
$\tau^{-1}(\xi)\cdot\tau^{-1}(g)\cdot\tau^{-1}(\xi)\overline{\al_0}=f^\sharp$.
Thus $g$ is similar to $f^\sharp$ and we conclude
that $(V^\sharp)_{\mf(\om)}\simeq P/Pf^\sharp$.
This finishes the proof of the theorem.
\end{proof}

\begin{remark}
The example in Section \ref{sec:ex_uqsl2}, concerning $U_q(\mathfrak{sl}_2)$,
shows that there exist non-simple indecomposable weight
modules $V(\om,f)$ which are pseudo-unitarizable.
This is in contrast to the case of bounded $\ast$-representations
of $\ast$-algebras on Hilbert spaces, that is, unitarizable modules
with respect to a positive definite form, where any unitarizable module
is semisimple. That example also shows that not all simple weight modules
are pseudo-unitarizable.
\end{remark}

\subsection{Finite orbit with breaks, first kind}
Recall that we defined
an automorphism of order two
of the monoid $\mathbf{D}$ by $x^\shrp =y$ and
 $y^\shrp =x$. For example, $(xxy)^\shrp=yyx$.
\begin{theorem}\label{thm:firstkind}
Let $\om$ be a finite real orbit with $m:=|B_\om|>0$, let $j\in \Z_m$
and let $w\in\mathbf{D}$. Then
 $V(\om,j,w)^\sharp\simeq V(\om,j,w^\shrp )$. In particular $V(\om,j,w)$
is pseudo-unitarizable iff $w=\ep$, the empty word (of length $n=0$),
which is equivalent to that $V(\om,j,w)$ is simple.
\end{theorem}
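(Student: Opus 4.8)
The plan is to exhibit the isomorphism $V(\om,j,w)^\sharp\simeq V(\om,j,w^\shrp)$ directly, by pairing the canonical basis of $V(\om,j,w^\shrp)$ with the dual of the canonical basis of $V(\om,j,w)$, and then to read off the pseudo-unitarizability statement from the uniqueness clause in the classification. Write $V=V(\om,j,w)$ with $w=z_1\cdots z_n$. Since $\om$ is real, Lemma~\ref{lem:indec} and Proposition~\ref{prop:supp} give that $V^\sharp$ is indecomposable with $\Supp(V^\sharp)=\Supp(V)$, so by Theorem~\ref{thm:classification} it is one of the listed modules with that support (a priori of the first or second kind); the explicit map below identifies it. For each basis vector $[\mf,e_k]$ of $V$ let $[\mf,e_k]^\sharp\in V^\sharp$ be the dual functional (value $1_\mf$ on $[\mf,e_k]$, zero on all other basis vectors); reality of $\om$ puts it in $(V^\sharp)_\mf$, and these functionals form a $\K_\mf$-basis of each $(V^\sharp)_\mf$. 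Using \eqref{eq:dualmod2}--\eqref{eq:dualmod3}, which express $X$ and $Y$ on $V^\sharp$ through $Y$ and $X$ on $V$, together with \eqref{eq:FinOrbBr1X}--\eqref{eq:FinOrbBr1Y}, one computes the action of $X,Y$ on the $[\mf,e_k]^\sharp$: between breaks $X$ and $Y$ again act bijectively (up to residues of $t$, which are units exactly because those ideals lie outside $B_\om$), while at the breaks the surviving nonzero arrows of $V^\sharp$ are those of $V$ with the roles of $X$ and $Y$ interchanged --- equivalently, those obtained by replacing each letter $z$ of $w$ by $z^\shrp$. This is precisely the break behaviour prescribed by \eqref{eq:FinOrbBr1X}--\eqref{eq:FinOrbBr1Y} for $V(\om,j,w^\shrp)$, with the same index $j$.

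Accordingly, I would define $\Phi\colon V(\om,j,w^\shrp)\to V(\om,j,w)^\sharp$ by $\Phi([\mf,e_k])=c_{\mf,k}\,[\mf,e_k]^\sharp$ for scalars $c_{\mf,k}\in\K_\mf$ still to be chosen. Once the $c_{\mf,k}$ are nonzero, $\Phi$ sends a $\K_\mf$-basis of each weight space to a $\K_\mf$-basis, hence is a bijection of weight spaces, so it remains only to choose them so that $\Phi X=X\Phi$ and $\Phi Y=Y\Phi$ hold on basis vectors. Comparing $\Phi(Xv)$ with $X\Phi(v)$ and $\Phi(Yv)$ with $Y\Phi(v)$ and using the computations above, these identities become a recursion for the $c_{\mf,k}$: across a break simply $c_{\si(\mf),k+1}=\si(c_{\mf,k})$, and along a ``long arrow'' ($\mf\notin B_\om$) one has $c_{\si(\mf),k}=\si(u_\mf c_{\mf,k})$ for a fixed unit $u_\mf\in\K_\mf$ (a residue of $t$ at an ideal not in $B_\om$). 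Now the weight diagram of a first-kind module is a single path without cycles --- each symbol $e_k$ contributes a finite string of weights joined by long arrows, and consecutive strings are joined by exactly one break edge --- so the recursion propagates from one endpoint with no consistency condition to check and produces nonzero $c_{\mf,k}$ everywhere. Hence $\Phi$ is the required isomorphism and $V^\sharp\simeq V(\om,j,w^\shrp)$. The only delicate point is the bookkeeping of the $\si$-twists and the $t$-scalars through \eqref{eq:dualmod2}--\eqref{eq:dualmod3} and the (routine) verification that the diagram is acyclic; the rest is formal.

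For the final assertion, suppose first $w=\ep$. Then $w^\shrp=\ep$, so $V^\sharp\simeq V$ and $V$ is pseudo-unitarizable by Proposition~\ref{prop:bij}; and $V(\om,j,\ep)$ is simple by Theorem~\ref{thm:DGOthm2}. Conversely, if $V$ is pseudo-unitarizable then, by Proposition~\ref{prop:bij}, $V\simeq V^\sharp\simeq V(\om,j,w^\shrp)$. The uniqueness clause of Theorem~\ref{thm:classification} records no isomorphism between two distinct first-kind modules $V(\om,i,w)$, so this forces $w^\shrp=w$; since $\shrp$ interchanges the two letters $x$ and $y$, $w^\shrp=w$ only when $w=\ep$. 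Finally, $V(\om,j,w)$ is simple if and only if $w=\ep$ by the same reasoning: $V(\om,j,\ep)$ is simple, while a simple first-kind module cannot be isomorphic to a second-kind module (again by the uniqueness in Theorem~\ref{thm:classification}), so by Theorem~\ref{thm:DGOthm2} it must be some $V(\om,i,\ep)$, and then $i=j$ and $w=\ep$.
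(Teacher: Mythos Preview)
Your proof is correct and follows essentially the same approach as the paper's: both construct the isomorphism by sending each canonical basis vector to a nonzero scalar multiple of the corresponding dual basis vector, and then solve the resulting recursion for the scalars by exploiting that the weight diagram of a first-kind module is an acyclic path. The only cosmetic differences are that the paper writes the map in the direction $V(\om,j,w)\to V(\om,j,w^\shrp)^\sharp$ and carries out the verification of $\Phi X=X\Phi$ and $\Phi Y=Y\Phi$ explicitly on basis vectors, whereas you reverse the direction and summarize the computation; your treatment of the ``in particular'' clause via the uniqueness part of Theorem~\ref{thm:classification} is also correct and slightly more detailed than the paper's.
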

\begin{proof}
Define $\Phi : V(\om,j,w)\to V(\om,j,w^\shrp )^\sharp$ by
 $\Phi\big( [\mf,e_k] \big) = c_{\mf,k} [\mf,e_k^\sharp]$
where $[\mf,e_k^\sharp]\in V(\om,j,w^\shrp)^\sharp$ are defined
by $[\mf,e_k^\sharp] \big( [\nf,e_l] \big) = \delta_{\nf,\mf}\delta_{k,l}\cdot 1_\mf$
(where $1_\mf=1+\mf\in R/\mf\subseteq R_\om$)
and the coefficients $c_{\mf,k}\in R/\mf$ are nonzero, to be determined later.
Extend $\Phi$ to an $R$-module isomorphism.

Let $[\mf,e_k]$ be a basis vector of $V(\om,j,w)$.
Thus $j+k\equiv j(\mf) \;(\text{mod }m)$.
Write $w=z_1\cdots z_n$. Consider a basis vector of the form
 $[\si(\mf),e_l]\in V(\om,j,w^\shrp)$. We have
\begin{alignat*}{2}
\Big(&X\Phi\big([\mf,e_k]\big)\Big)\big([\si(\mf),e_l]\big)=   \\
&=\si\Big(c_{\mf,k}[\mf,e_k^\sharp ]\big( Y[\si(\mf),e_l]\big) \Big)
    \quad\qquad\qquad\text{by $A$-module str. on $V(\om,j,w^\shrp)^\sharp$}\\
&=
\begin{cases}
\si\Big( c_{\mf,k}[\mf,e_k^\sharp] \big([\mf,e_l] \big) \Big) ,&\mf\notin B_\om,\\
\si\Big( c_{\mf,k}[\mf,e_k^\sharp] \big([\mf,e_{l-1}] \big)\Big) ,&\mf\in B_\om\text{ and }z_l^\shrp=y,\\
0,&\text{otherwise}
\end{cases}
    \qquad\qquad\qquad\text{by \eqref{eq:FinOrbBr1Y}}\\
&=\begin{cases}
\si(c_{\mf,k})\delta_{kl}\cdot 1_{\si(\mf)},&\mf\notin B_\om,\\
\si(c_{\mf,k})\delta_{k,l-1}\cdot 1_{\si(\mf)},&\mf\in B_\om\text{ and }z_l=x,\\
0,&\text{otherwise}
\end{cases}
    \text{}\\
&=\begin{cases}
\si(c_{\mf,k})c_{\si(\mf),k}^{-1}
 \Big( \Phi\big([\si(\mf),e_k]\big) \Big) \big([\si(\mf),e_l]\big),&\mf\notin B_\om,\\
\si(c_{\mf,k})c_{\si(\mf),k+1}^{-1}
 \Big( \Phi\big([\si(\mf),e_{k+1}]\big) \Big) \big([\si(\mf),e_l]\big),&\mf\in B_\om\text{ and }z_{k+1}=x,\\
0,&\text{otherwise.}
\end{cases}
    \text{}\\
&=\Big(\Phi \big(X[\mf,e_k]\big)\Big)\big([\si(\mf),e_l]\big)
    \qquad\qquad\qquad\qquad
    \qquad\qquad\qquad\qquad\quad
    \text{by \eqref{eq:FinOrbBr1X}}
\end{alignat*}
if $c_{\mf,k}$ are chosen in such a way that
 $\si(c_{\mf,k})/c_{\si(\mf),k}=\si(t_\mf)$ when $\mf\notin B_\om$ 
and $\si(c_{\mf,k})/c_{\si(\mf),k+1}=1$ when $\mf\in B_\om$ and $z_{k+1}=x$.
On other basis vectors $[\nf,e_l]$, $\nf\neq\si(\mf)$, both sides are zero:
\[\Big(X\Phi\big([\mf,e_k]\big)\Big)\big([\nf,e_l ]\big) = 0 =
\Big(\Phi\big(X[\mf,e_k]\big)\Big)\big([\nf,e_l ]\big).
\]
With this choice of coefficients,
$\Phi$ commutes with the action of $X$.
For the action of $Y$, suppose $v$ is a basis vector of $V(\om,j,w)$
which is equal to $Xu$ for some $u$. Then
\[\Phi(Yv)=\Phi(YXu)=\Phi(tu)=t\Phi(u)=YX\Phi(u)=Y\Phi(Xu)=Y\Phi(v).\]
It remains to compare the results of applying $\Phi Y$ and $Y\Phi$ on basis vectors
which are not in the image of $X$. They have the form $[\si(\mf),e_k]$
where $\mf\in B_\om$ and $z_k\neq x$, i.e. $z_k=y$ or $k=0$.
Similarly to the previous calculation we get
\begin{align*}
\Big(Y\Phi\big ( [\si(\mf) & ,e_k]\big)\Big)\big([\mf,e_l]\big)=
 \si^{-1}\Big(c_{\si(\mf),k}[\si(\mf),e_k^\sharp]\big(X[\mf,e_l]\big)\Big)=\\
&=\begin{cases}
\si^{-1}\Big(c_{\si(\mf),k}[\si(\mf),e_k^\sharp] \big( [\si(\mf),e_{l+1}]\big)\Big),& z_{l+1}^\shrp =x,\\
0,&\text{otherwise}
\end{cases}\\
&=\begin{cases}
\si^{-1}(c_{\si(\mf),k})\delta_{k,l+1}\cdot 1_\mf,& z_{l+1}=y,\\
0,&\text{otherwise}
\end{cases}\\
&=
 \begin{cases} \si^{-1}(c_{\si(\mf),k})c_{\mf,k-1}^{-1}
 \Big( \Phi\big( [\mf,e_{k-1}] \big) \Big) \big([\mf,e_l]\big),& z_k=y,\\
0,&\text{otherwise}
\end{cases}\\
&=\Big( \Phi\big(Y[\si(\mf),e_k]\big) \Big) \big([\mf,e_l]\big)
\end{align*}
if the coefficients are chosen such that $\si^{-1}(c_{\si(\mf),k})/c_{\mf,k-1}=1$ when
 $\mf\in B_\om$ and $z_k=y$. 
Choosing the coefficients in this way, which is always possible,
 $\Phi$ becomes an isomorphism of $A$-modules.
\end{proof}

\begin{example}
Assume that $\om\in\Om$ is real and $p=|\om|=7$. Pick $\nf\in\om$. Then
 $\om=\{\si^j(\nf)\; |\; j=0,\ldots,6\}$. Suppose that $B_\om=\{\mf_0:=\si^2(\nf),
\mf_1:=\si^4(\nf), \mf_2:=\si^6(\nf)\}$, so $m=|B_\om|=3$.

\begin{figure}
\caption{Weight diagram for $V(\om,j,w)$ where $j=0$ and $w=z_1z_2\cdots z_{10}$}
\[\xymatrix@R-10pt{
&\bullet \save[]+<0pt,-9pt>*{e_0}+<0pt,18pt>*{\nf}\restore \ar@/^/[r]
&\bullet \save[]+<0pt,-9pt>*{e_0}+<0pt,18pt>*{ }\restore \ar@/^/[l] \ar@/^/[r]
&\bullet \save[]+<0pt,-9pt>*{e_0}+<0pt,18pt>*{\mf_0}\restore \ar@/^/[l] \ar@{-}[r]^{z_1}
&\bullet \save[]+<0pt,-9pt>*{e_1}+<0pt,18pt>*{ }\restore \ar@/^/[r]
&\bullet \save[]+<0pt,-9pt>*{e_1}+<0pt,18pt>*{\mf_1 }\restore \ar@/^/[l] \ar@{-}[r]^{z_2}
&\bullet \save[]+<0pt,-9pt>*{e_2}+<0pt,18pt>*{ }\restore \ar@/^/[r]
&\bullet \save[]+<9pt,-9pt>*{e_2}+<-9pt,18pt>*{\mf_2 }\restore \ar@/^/[l]
 \ar@(d,u)@{-}[ddllllll]^{z_3} \\
 \\
&\bullet \save[]+<0pt,-9pt>*{e_3}+<0pt,18pt>*{ }\restore \ar@/^/[r]
&\bullet \save[]+<0pt,-9pt>*{e_3}+<0pt,18pt>*{ }\restore \ar@/^/[l] \ar@/^/[r]
&\bullet \save[]+<0pt,-9pt>*{e_3}+<0pt,18pt>*{}\restore \ar@/^/[l] \ar@{-}[r]^{z_4}
&\bullet \save[]+<0pt,-9pt>*{e_4}+<0pt,18pt>*{ }\restore \ar@/^/[r]
&\bullet \save[]+<0pt,-9pt>*{e_4}+<0pt,18pt>*{ }\restore \ar@/^/[l] \ar@{-}[r]^{z_5}
&\bullet \save[]+<0pt,-9pt>*{e_5}+<0pt,18pt>*{ }\restore \ar@/^/[r]
&\bullet \save[]+<9pt,-9pt>*{e_5}+<-9pt,18pt>*{ }\restore \ar@/^/[l]
 \ar@(d,u)@{-}[ddllllll]^{z_6} \\
\\
&\bullet \save[]+<0pt,-9pt>*{e_6}+<0pt,18pt>*{}\restore \ar@/^/[r]
&\bullet \save[]+<0pt,-9pt>*{e_6}+<0pt,18pt>*{ }\restore \ar@/^/[l] \ar@/^/[r]
&\bullet \save[]+<0pt,-9pt>*{e_6}+<0pt,18pt>*{}\restore \ar@/^/[l] \ar@{-}[r]^{z_7}
&\bullet \save[]+<0pt,-9pt>*{e_7}+<0pt,18pt>*{ }\restore \ar@/^/[r]
&\bullet \save[]+<0pt,-9pt>*{e_7}+<0pt,18pt>*{ }\restore \ar@/^/[l] \ar@{-}[r]^{z_8}
&\bullet \save[]+<0pt,-9pt>*{e_8}+<0pt,18pt>*{ }\restore \ar@/^/[r]
&\bullet \save[]+<9pt,-9pt>*{e_8}+<-9pt,18pt>*{ }\restore \ar@/^/[l]
 \ar@(d,u)@{-}[ddllllll]^{z_9} \\
\\
&\bullet \save[]+<0pt,-9pt>*{e_9}+<0pt,18pt>*{}\restore \ar@/^/[r]
&\bullet \save[]+<0pt,-9pt>*{e_9}+<0pt,18pt>*{ }\restore \ar@/^/[l] \ar@/^/[r]
&\bullet \save[]+<0pt,-9pt>*{e_9}+<0pt,18pt>*{}\restore \ar@/^/[l] \ar@{-}[r]^{z_{10}}
&\bullet \save[]+<0pt,-9pt>*{e_{10}}+<0pt,18pt>*{ }\restore \ar@/^/[r]
&\bullet \save[]+<0pt,-9pt>*{e_{10}}+<0pt,18pt>*{ }\restore \ar@/^/[l]
}\]
\end{figure}

With $\om$ as above, there are three modules of the form $V(\om,j,\ep)$ corresponding
to $j=0,1,2$. For example, $V(\om,1,\ep)$ is two-dimensional with
basis $\{[\si^{-1}(\mf_1),e_1], [\mf_1,e_1] \}$.

\end{example}

In general, let $j\in\Z_m$ and $V=V(\om,j,\ep)$.
We determine all non-degenerate admissible forms on $V$.
$V$ has a basis
\[\{v_k:=[\si^{-k}(\mf_j),e_j] \; |\; k=0,1,\ldots,p_j-1\},\]
where $p_j>0$ is minimal such that $\si^{p_j}(\mf_{j-1})=\mf_j$.
Any $A$-module isomorphism $V\to V^\sharp$ has the form
$\Phi_\la(v_0)=\la v_0^\sharp$
for some $\la\in\K_{\mf_j}$, where $v_0^\sharp=[\mf_j,e_j^\sharp]$.
The corresponding admissible form satisfies
\begin{multline}\label{eq:PhilaFinite}
\widehat{\Phi_\la}(v_n,v_m)=
\widehat{\Phi_\la}(Y^nv_0,Y^nv_0)\delta_{n,m}=
\si^{-n}\big(\widehat{\Phi_\la}(X^nY^nv_0,v_0)\big)\delta_{n,m}=\\=
\si^{-n}\big(\si(t)\si^2(t)\cdots\si^n(t)\la\big)\delta_{n,m}
\end{multline}
for $n,m=0,1,\ldots,p_j-1$. It is clearly non-degenerate iff $\la\neq 0$.

Suppose that $\om$ is torsion trivial (recall that, by Definition \ref{dfn:torsion_trivial},
this means that if $\sigma^n(\mf)=\mf$ for some $n\in\Z$ and some $\mf\in\om$,
then the induced map from $R/\mf$ to itself is the identity).
Choose $\mf(\om)=\mf_j$. Suppose that
 $\K_\om\simeq\C$ and that conjugation is usual complex conjugation
and assume that $\la\in \mathbb{R}$. Let $\Psi_\la$ be the associated
symmetric $\C$-form as described in Proposition \ref{prop:fieldform2}.
We have
\[
\Psi_\la(v_n,v_m)=\big(\si(t)\si^2(t)\cdots\si^n(t)\big)_{\mf(\om)}\la\delta_{n,m}
\]
for $n,m=0,1,\ldots,p_j-1$.
Let us calculate the index $(n_+,n_-)$, (i.e. $n_+$ $(n_-)$ is the number
of positive (negative) eigenvalues) of the form $\Psi_\la$.
Let $a_0=\la$ and $a_i=\si^i(t)+\mf(\om)\in\mathbb{R}$, $i=1,\ldots, p_j-1$.
Let $0\le s_1<s_2<\cdots<s_r\le p_j-1$ be the integers $i$
for which $a_i<0$ and put $s_i=0$ for $i\le 0$ and put $s_i=p_j$ for $i>r$.
Then one can check that $\Psi_\la$ has index
 $\big(\sum_{i\in\Z} (s_{2i+1}-s_{2i}), \sum_{i\in\Z} (s_{2i}-s_{2i-1})\big)$. 
For example, if $p_j=7$ and $\sgn(\la,a_1,a_2,\ldots,a_6)=(+,+,-,+,+,-,-)$,
then the index of $\Psi_\la$ is $(2+1,3+1)=(3,4)$.
All possible indices can occur. This can be seen as in Section \ref{sec:inforbnobr}.

\subsection{Finite orbit with breaks, second kind}
For $r\in R$ and $\mf\in\Max(R)$, we put $r_\mf=r+\mf\in R/\mf$ for brevity.
First we prove a theorem which partially describes the finitistic dual of a
module of the form $V(\om,w,f)$.
\begin{theorem}\label{thm:2ndkind_w}
Let $\om\in\Om$ be a finite real orbit.
Let $V=V(\om,w,f)$ where $w=z_1z_2\cdots z_n$ is an $m$-word,
and $f=a_1+a_2x+\cdots+a_dx^{d-1}+x^d\in \K_\om[x;\tau^{n/m}]$ is any element
with $a_1\neq 0$.
Then $V^\sharp\simeq V(\om,w^\shrp,g)$ for some $g\in\K_\om[x;\tau^{n/m}]$.
\end{theorem}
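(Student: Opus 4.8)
The plan is to imitate the proof of Theorem~\ref{thm:firstkind} and construct an explicit $A$-module isomorphism
\[
\Phi:V(\om,w,f)\longrightarrow V(\om,w^\shrp,g)^\sharp
\]
for a suitable $g=b_1+b_2x+\cdots+b_dx^{d-1}+x^d\in\K_\om[x;\tau^{n/m}]$ to be pinned down during the construction. Once $\Phi$ is available, applying the contravariant functor $\sharp$ gives $V(\om,w^\shrp,g)^{\sharp\sharp}\simeq V^\sharp$, and combining with the canonical isomorphism $V(\om,w^\shrp,g)\simeq V(\om,w^\shrp,g)^{\sharp\sharp}$ of Proposition~\ref{eq:duality} (the weight spaces here are finite dimensional) yields $V^\sharp\simeq V(\om,w^\shrp,g)$. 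As a consistency check one may note, via Lemma~\ref{lem:indec}, Proposition~\ref{prop:supp} and the realness of $\om$, that $V^\sharp$ is indecomposable with $\Supp(V^\sharp)=\om$, so by Theorem~\ref{thm:classification} it is forced to be of the first or second kind over $\om$; the construction will show it is of the second kind with word parameter $w^\shrp$.

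For the construction, let $[\mf,e_{ks}^\sharp]\in V(\om,w^\shrp,g)^\sharp$ denote the functional dual to the basis vector $[\mf,e_{ks}]$ (vanishing on all other weight spaces); since $\om$ is real these lie in $\big(V(\om,w^\shrp,g)^\sharp\big)_\mf$ and form a basis of it. I would set $\Phi([\mf,e_{ks}])=\sum_r c_{\mf,k,s,r}[\mf,e_{kr}^\sharp]$, extended $R$-linearly, with the coefficients $c$ to be determined by demanding that $\Phi$ commute with $X$. Away from the single wrap-around point $\mf_0=\mf(\om)$ — that is, at non-breaks and at interior breaks — computing $X[\mf,e_{ks}^\sharp]$ from the dual action $(X\psi)(v)=\si\big(\psi(Yv)\big)$ and the explicit $Y$-action of $V(\om,w^\shrp,g)$ shows that $X$ acts diagonally in the $s$-index there, so the intertwining condition is, just as in Theorem~\ref{thm:firstkind}, that $\Phi$ be diagonal with a single coefficient subject to $\si(c_{\mf,k,s})/c_{\si(\mf),k,s}=\si(t_\mf)$ at non-breaks and $\si(c_{\mf,k,s})/c_{\si(\mf),k+1,s}=1$ at interior breaks with $z_{k+1}=x$ (and the analogous conditions coming from $z=y$). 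These recursions are solvable and propagate the coefficients from $\mf_0$ around the whole orbit. Commutation of $\Phi$ with $Y$ is then automatic on the image of $X$, exactly as in the proof of Theorem~\ref{thm:firstkind}, so only the wrap-around requires separate attention. The replacement of $w$ by $w^\shrp$ is built into this step: dualizing interchanges the roles of $X$ and $Y$, turning each break where $X$ acts as zero into one where $Y$ acts as zero and vice versa, which is precisely the effect of $w\mapsto w^\shrp$.

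The hard part will be the wrap-around at $\mf_0$, the one place where the $d$ strands $e_{\bullet,1},\dots,e_{\bullet,d}$ mix. Writing $z_1=x$ (the case $z_1=y$ is symmetric), the $X$-action of $V(\om,w,f)$ on $[\mf_0,e_{n,s}]$ brings in the coefficients of $f$, while on the dual side $X[\mf_0,e_{n,s}^\sharp]$ is a combination of $[\si(\mf_0),e_{1,s+1}^\sharp]$ and $[\si(\mf_0),e_{1,1}^\sharp]$ with a scalar built from the coefficients of $g$. Equating $X\Phi=\Phi X$ on the vectors $[\mf_0,e_{n,s}]$ and $[\si(\mf_0),e_{1,s}]$, $s=1,\dots,d$, produces a linear system that expresses the wrap-around coefficients $c_{\si(\mf_0),1,\bullet,\bullet}$ in terms of the $c_{\mf_0,n,\bullet,\bullet}$ and of $g$; solving it requires dividing by the leading data of $f$, namely by (the image of) $a_1$, which is exactly where the hypothesis $a_1\neq0$ enters. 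Feeding these values back into the orbit-wide recursions of the previous paragraph yields a closing-up condition whose solution determines $g$ — and the ambiguity in that solution is only the replacement of $(w^\shrp,g)$ by $(w^\shrp(mi),\tau^i(g))$, which is immaterial by the isomorphism~\eqref{eq:DGOiso2}. Checking that this condition is genuinely solvable, i.e. that a compatible $g$ exists and that all the $c_{\mf,k,s,r}$ close up consistently around the cycle with nonzero diagonal entries (so that $\Phi$ is bijective on each weight space, hence an $A$-module isomorphism), is the technical core of the argument; everything else parallels the bookkeeping in the proofs of Theorems~\ref{thm:firstkind} and~\ref{thm:Vomfdual}.
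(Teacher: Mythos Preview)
Your overall strategy is sound, and you correctly identify why $w$ becomes $w^\shrp$ under dualization and where $a_1\neq 0$ enters. But one step in your plan is wrong, and the paper takes a cleaner route that avoids the difficulty.

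The flaw is the claim that away from the wrap-around the intertwining forces $\Phi$ to be ``diagonal with a single coefficient''. Those recursions only transport the $d\times d$ coefficient matrix $(c_{\mf,k,s,r})_{s,r}$ along the orbit; they preserve diagonality but do not impose it. Start diagonal at $(\si(\mf_0),1)$ and you arrive diagonal at $(\mf_0,n)$, but the wrap-around relation --- where both the $X$-action on $V$ and the dual $X$-action on $V(\om,w^\shrp,g)^\sharp$ genuinely mix the $d$ strands --- cannot then be satisfied unless $f$ has the degenerate form $a_1+x^d$. For general $f$ the coefficient matrix must be non-diagonal everywhere, and your sketch does not say how to find it; ``solve a linear system, feed back, close up'' hides exactly the work to be done. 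There is also a mild circularity: the target $V(\om,w^\shrp,g)^\sharp$ depends on the unknown $g$.

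The paper sidesteps both problems by working directly in $V^\sharp$ instead of in the dual of an unknown module. After computing the $X,Y$ actions on the dual basis $[\mf,e_{ks}^\sharp]$ of $V^\sharp$ (this already exhibits $w\mapsto w^\shrp$, and shows $a_1\neq 0$ makes a certain restriction of $Y$ bijective), it builds a new basis $[\mf,f_{ks}]$ of $V^\sharp$ \emph{recursively}: start from $[\si(\mf_0),f_{11}]=[\si(\mf_0),e_{11}^\sharp]$ and at each step apply either $X$ (suitably normalized) or the inverse of that bijective $Y$, as dictated by the letters of $w^\shrp$; the wrap-around step uses $Y^{-1}$ to pass from layer $s-1$ to layer $s$. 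Induction shows each $[\mf,f_{ks}]$ is upper-triangular in the $e^\sharp$-basis with nonzero diagonal, so the $f$'s form a basis automatically --- your ``nonzero diagonal'' check comes for free. The assignment $[\mf,e_{ks}]\mapsto[\mf,f_{ks}]$ is then an $A$-module isomorphism $V(\om,w^\shrp,g)\to V^\sharp$, with $g$ simply read off at the end from the coefficients expressing $Y[\si(\mf_0),f_{11}]$ in the new basis. No system of equations to solve, no $g$ needed in advance.
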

\begin{proof}
For simplicity, we will assume that $z_1=x$. The proof of the case $z_1=y$ is similar.

\textbf{Step 1.} We find the action of $X$ and $Y$ on a dual basis in $V^\sharp$.
Relations \eqref{eq:2ndkindX}-\eqref{eq:2ndkindY} for the module $V$ can be written
\begin{align}
\label{eq:2ndkindXpf}
X[\mf,e_{ks}]&=\begin{cases}
\si(t_{\mf}) \cdot [\si(\mf),e_{ks}],&\mf\notin B_\om,\\
[\si(\mf),e_{k+1,s}],&\mf\in B_\om, k< n, z_{k+1}=x,\\
0,&\mf\in B_\om, k< n, z_{k+1}=y,\\
[\si(\mf),e_{1,s+1}],&\mf\in B_\om, k=n, s< d,\\
-\sum_{i=1}^d \si(a_i)\cdot [\si(\mf),e_{1i}],&\mf\in B_\om, k=n, s=d,
\end{cases}\\
\label{eq:2ndkindYpf}
Y[\mf,e_{ks}]&=\begin{cases}
[\si^{-1}(\mf),e_{ks}],&  \si^{-1}(\mf)\notin B_\om,\\
[\si^{-1}(\mf),e_{k-1,s}],& \si^{-1}(\mf)\in B_\om, k>1, z_k=y,\\
0,&  \si^{-1}(\mf)\in B_\om, k>1, z_k=x,\\
0,&  \si^{-1}(\mf)\in B_\om, k=1.
\end{cases}
\end{align}
Let
\[\big\{[\mf,e_{ks}^\sharp]\; |\; s=1,\ldots,d,\; k=1,\ldots,n,\; k\equiv j(\mf)\;(\text{mod }m)\big\}\]
be the dual basis in $V^\sharp$, defined by requiring (recall that $1_\mf$ denotes $1+\mf\in R/\mf$)
\begin{equation}\label{eq:2ndkind_dualbasis}
 [\mf,e_{ks}^\sharp] \big([\nf,e_{lr}]\big)=
\begin{cases}
1_\mf,&\text{if $\mf=\nf, k=l, s=r$,}\\
0,&\text{otherwise,}
\end{cases}
\end{equation}
and $[\mf,e_{ks}^\sharp]$ to be additive and
 $[\mf,e_{ks}^\sharp](rv)=r^*\cdot[\mf,e_{ks}^\sharp](v)$
for any $r\in R$, $v\in V$.
Then the following relations hold for the action of $X$ and $Y$ on this dual basis:
\begin{equation}
\label{eq:2ndkind_dualX}
X [\mf,e_{ks}^\sharp] =\begin{cases}
[\si(\mf),e_{ks}^\sharp],&\mf\notin B_\om,\\
[\si(\mf),e_{k+1,s}^\sharp],&\mf\in B_\om, k<n, z_{k+1}=y,\\
0,&\text{otherwise,}
\end{cases}
\end{equation}
\begin{multline} \label{eq:2ndkind_dualY}
Y[\mf,e_{ks}^\sharp] = \\
 =\begin{cases}
t_{\si^{-1}(\mf)}\cdot [\si^{-1}(\mf),e_{ks}^\sharp],&\si^{-1}(\mf)\notin B_\om,\\
[\si^{-1}(\mf),e_{k-1,s}^\sharp],&\si^{-1}(\mf)\in B_\om, k>1, z_k=x,\\
0,&\si^{-1}(\mf)\in B_\om, k>1, z_k=y,\\
[\si^{-1}(\mf),e_{n,s-1}^\sharp]-\overline{a_s} \cdot [\si^{-1}(\mf),e_{nd}^\sharp],
 &\si^{-1}(\mf)\in B_\om, k=1, s>1,\\
-\overline{a_1} \cdot [\si^{-1}(\mf),e_{nd}^\sharp],& \si^{-1}(\mf)\in B_\om, k=1, s=1.
\end{cases}
\end{multline}
Let us prove the first case in \eqref{eq:2ndkind_dualY}. If $\si^{-1}(\mf)\notin B_\om$, then
\begin{align*}
\big(Y[\mf,e_{ks}^\sharp]\big)&\big([\si^{-1}(\mf),e_{lr}]\big)=\\
&=\si^{-1}\Big([\mf,e_{ks}^\sharp]\big(X[\si^{-1}(\mf),e_{lr}]\big)\Big)
 &\text{by $A$-module str. of $V^\sharp$,}\\
&=\si^{-1}\Big([\mf,e_{ks}^\sharp]\big(\si(t)\cdot [\mf,e_{lr}]\big)\Big)
 &\text{by \eqref{eq:2ndkindXpf},}\\
&=\si^{-1}\Big(\si(t)^*\cdot [\mf,e_{ks}^\sharp]\big([\mf,e_{lr}]\big)\Big)
 &\text{by $R$-antilinearity,}\\
&=t\cdot \delta_{kl}\delta_{sr}\cdot \si^{-1}(1_{\mf})
 &\text{by \eqref{eq:2ndkind_dualbasis},}\\
&=t\cdot [\si^{-1}(\mf),e_{ks}^\sharp]\big([\si^{-1}(\mf),e_{lr}]\big)
 &\text{by \eqref{eq:2ndkind_dualbasis}.}
\end{align*}
Furthermore, if $\nf\neq\si^{-1}(\mf)$ then
\[\big(Y[\mf,e_{ks}^\sharp]\big)\big([\nf,e_{lr}]\big)
=\si^{-1}\Big([\mf,e_{ks}^\sharp]\big(X[\nf,e_{lr}]\big)\Big)=0
=t\cdot [\si^{-1}(\mf),e_{ks}^\sharp]\big([\nf,e_{lr}]\big)\]
using that $X[\nf,e_{lr}]\in V_{\si(\nf)}$ and \eqref{eq:2ndkind_dualbasis}.
This proves that
 $Y[\mf,e_{ks}^\sharp]=t\cdot [\si^{-1}(\mf),e_{ks}^\sharp]=
t_{\si^{-1}(\mf)}\cdot [\si^{-1}(\mf),e_{ks}^\sharp]$ if $\si^{-1}(\mf)\notin B_\om$.

For the last two cases in \eqref{eq:2ndkind_dualY}, let us first note that
if $\si^{-1}(\mf)\in B_\om$ and $j(\si^{-1}(\mf))\equiv n\equiv 0\; (\text{mod $m$})$
then in fact $\si^{-1}(\mf)=\mf_0$. We have
\begin{align*}
&\big(Y[\si(\mf_0),e_{1s}^\sharp]\big)\big([\mf_0,e_{lr}]\big)=\\
&=\si^{-1}\Big([\si(\mf_0),e_{1s}^\sharp]\big(X[\mf_0,e_{lr}]\big)\Big)
 \qquad\text{by $A$-module str. of $V^\sharp$,}\\
&=\si^{-1}\Big([\si(\mf_0),e_{1s}^\sharp]\big(
 [\si(\mf_0),e_{1,r+1}]\delta_{ln}\delta_{r<d}
  - \si(a_s)[\si(\mf_0),e_{1s}]\delta_{ln}\delta_{rd}\big)\Big)\\
&=\delta_{s-1,r}\delta_{s>1}\delta_{ln}1_{\mf_0}-\overline{a_s}\delta_{ln}\delta_{rd} 1_{\mf_0}\\
&=\big([\mf_0,e_{n,s-1}^\sharp]\delta_{s>1}-\overline{a_s}\cdot [\mf_0,e_{nd}^\sharp]\big)
 \big([\mf_0,e_{lr}]\big).
\end{align*}
The other cases in \eqref{eq:2ndkind_dualX},\eqref{eq:2ndkind_dualY} are easily checked.

\textbf{Step 2.}
We construct a basis $[\mf,f_{ks}]$ for $V^{\sharp}$ such that
 $[\mf,e_{ks}]\mapsto [\mf,f_{ks}]$ is an isomorphism from 
 $V(\om,w^\shrp,g)$ to $V^{\sharp}$ for some $g$.
We have a decomposition
\begin{equation}\label{eq:Vsharpdecomp}
(V^\sharp)_\mf=\bigoplus_{\substack{1\le k\le n,\\ k\equiv j(\mf)\;\text{(mod $m$)}}} (V^\sharp)_\mf^{(k)}
\qquad\text{for any $\mf\in\om$,}
\end{equation}
\begin{equation}
(V^\sharp)_\mf^{(k)}=\oplus_{s=1}^d \K_\mf [\mf, e_{ks}^\sharp].
\end{equation}
Note that, if $k>1$ and $z_k^\shrp=y$ then
 $Y:(V^\sharp)_\mf^{(k)}\to (V^\sharp)_{\si^{-1}(\mf)}^{(k-1)}$
 is bijective, where $\si^{-1}(\mf)\in B_\om$ is the unique break such that $j(\mf)\equiv k\;(\text{mod }m)$.
Indeed this is trivial since $Y[\mf,e_{ks}^\sharp]=[\si^{-1}(\mf),e_{k-1,s}^\sharp]$
for $s=1,\ldots,d$ by the second case in \eqref{eq:2ndkind_dualY}.
Also, $Y:(V^\sharp)_{\si(\mf_0)}^{(1)}\to (V^\sharp)_{\mf_0}^{(n)}$
is bijective by the fourth and fifth case in \eqref{eq:2ndkind_dualY},
using the assumption that $a_1\neq 0$.

Put
\begin{equation}\label{eq:2ndkind_fdef1_w}
 [\si(\mf_0),f_{11}] = [\si(\mf_0),e_{11}^\sharp]
\end{equation}
and recursively
\begin{equation}\label{eq:2ndkind_fdef2_w}
 [\mf,f_{ks}] =\begin{cases}
\si(t)_\mf^{-1} X[\si^{-1}(\mf),f_{ks}],
  & \si^{-1}(\mf)\notin B_\om,\\
X[\si^{-1}(\mf),f_{k-1,s}],
  &\si^{-1}(\mf)\in B_\om, z_k^\shrp=x (\Rightarrow k>1),\\
\big( Y|_{(V^\sharp)_\mf^{(k)}}\big)^{-1} [\si^{-1}(\mf),f_{k-1,s}],
  &\si^{-1}(\mf)\in B_\om, k>1, z_k^\shrp=y,\\
\big( Y\big|_{(V^\sharp)_\mf^{(1)}}\big)^{-1}[\si^{-1}(\mf),f_{n,s-1}],
  &\si^{-1}(\mf)\in B_\om, k=1.
\end{cases}
\end{equation}
Induction shows that each $[\mf,f_{ks}]$ is a linear combination of $[\mf,e_{kr}^\sharp]$ where
 $1\le r\le s$ and the coefficient of $[\mf,e_{ks}^\sharp]$ is nonzero.
 Thus $\big\{[\mf,f_{ks}]\big\}_{s=1}^{d}$ is a basis for $(V^\sharp)_{\mf}^{(k)}$.

We prove that there exists a $g\in\K_\om[x;\tau^{n/m}]$ such 
that the $R$-module isomorphism $\ph:V(\om,w^\shrp,g)\to V^\sharp$ defined
by $\ph([\mf,e_{ks}])=[\mf,f_{ks}]$ is an $A$-module isomorphism.
By \eqref{eq:2ndkindX},
\begin{align}
\nonumber
\ph(X[\mf,e_{ks}])&=\begin{cases}
\ph\big(\si(t)_{\si(\mf)}\cdot[\si(\mf),e_{ks}]\big),&\mf\notin B_\om,\\
\ph\big([\si(\mf),e_{k+1,s}]\big),&\mf\in B_\om, k<n, z_{k+1}^\shrp=x,\\
0,&\text{otherwise (since $z_1^\shrp=y$),}
\end{cases}\\
\label{eq:kalkyl762}
&=\begin{cases}
\si(t)_{\si(\mf)}\cdot [\si(\mf),f_{ks}],&\mf\notin B_\om,\\
[\si(\mf),f_{k+1,s}],&\mf\in B_\om, k<n, z_{k+1}^\shrp=x,\\
0,&\text{otherwise},
\end{cases}
\end{align}
while
 $X\ph\big([\mf,e_{ks}]\big)=X[\mf,f_{ks}]$.
By the recursive definition of $[\mf,f_{ks}]$,
the vector $X[\mf,f_{ks}]$ equals the right hand side of
\eqref{eq:kalkyl762}.
For example, $[\si(\mf),f_{ks}]=\si(t)_{\si(\mf)}^{-1}\cdot X[\mf,f_{ks}]$
if $\mf\notin B_\om$ by the first case in \eqref{eq:2ndkind_fdef2_w},
which gives $X[\mf,f_{ks}]=\si(t)_{\si(\mf)}\cdot [\si(\mf),f_{ks}]$.
 Similarly, by \eqref{eq:2ndkindY} and the construction of
the basis $[\mf,f_{ks}]$, $\ph\big(Y[\mf,e_{ks}]\big)=Y\ph\big([\mf,e_{ks}]\big)$
when $k>1$ or $s>1$ or $\mf\neq\si(\mf_0)$. For the last case, $k=s=1$ and $\mf=\si(\mf_0)$,
we know that
 $Y:(V^\sharp)_{\si(\mf_0)}^{(1)}\to (V^\sharp)_{\mf_0}^{(n)}$
is bijective. Thus, since $\big\{[\mf_0,f_{ns}]\big\}_{s=1}^d$
is a basis for $(V^\sharp)_{\mf_0}^{(n)}$,
\[Y\ph\big([\si(\mf_0),e_{11}]\big)=Y[\si(\mf_0),f_{11}]
=-\sum_{r=1}^d c_r^\circ\cdot [\mf_0,f_{nr}]\]
for some constants $c_r^\circ\in\K_\om$. Put $c_{d+1-r}=\tau^{r-d}(c_r^\circ)$ for
$r=1,\ldots,d$ and
choose $g=c_1+c_2x+\cdots+c_d x^{d-1}+x_d$.
Since $z_1^\shrp=y$, relation \eqref{eq:2ndkindY} gives that,
in $V(\om,w^\shrp,g)$ we have $Y[\si(\mf_0),e_{11}]=-\sum_{r=1}^d c_r^\circ [\mf_0,e_{nr}]$
and thus
\[\ph\big(Y[\si(\mf_0),e_{11}]\big)=\ph\big(-\sum_{r=1}^d c_r^\circ [\mf_0,e_{nr}]\big)=
-\sum_{r=1}^d c_r^\circ [\mf_0,f_{nr}].\]
This finishes the proof that $V^\sharp\simeq V(\om,w^\shrp,g)$ for
some $g$.
\end{proof}

As a corollary we get a necessary condition on the word $w$
for a module $V(\om,w,f)$ to be isomorphic to its finitistic dual.

\begin{corollary}\label{cor:2ndkind}
Let $\om$ be a finite real orbit. Let $V=V(\om,w,f)$ where $w=z_1z_2\cdots z_n$
is a non-periodic $m$-word, and $f=a_1+a_2x+\cdots +a_d x^{d-1}+x^d\neq x^d$
is indecomposable in $\K_\om[x;\tau^{n/m}]$.
If $V\simeq V^\sharp$ then $w=w_0w_0^\shrp$, where $w_0$ is an $m$-word.
\end{corollary}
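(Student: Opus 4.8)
\emph{Step 1: locating the dual on the list.} Since $f$ is indecomposable with $f\neq x^d$, its constant term $a_1$ is nonzero (an indecomposable $f\neq x^d$ has nonzero constant term), so Theorem~\ref{thm:2ndkind_w} applies and $V^\sharp\simeq V(\om,w^\shrp,g)$ for some $g\in\K_\om[x;\tau^{n/m}]$. By hypothesis $V\simeq V^\sharp$, and by Lemma~\ref{lem:indec} $V^\sharp$ is indecomposable; hence $V(\om,w^\shrp,g)$ is indecomposable, and since (as in \cite{DGO}) the indecomposable summands of a module of the second kind are again of the second kind with the same word, we may take $g$ indecomposable with $g\neq x^{\deg g}$. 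Moreover $w^\shrp$ is a non-periodic $m$-word, because $\shrp$ is a length-preserving automorphism of the monoid $\mathbf{D}$ carrying powers to powers. Thus $V(\om,w,f)$ and $V(\om,w^\shrp,g)$ are isomorphic modules of the second kind among those listed in Theorem~\ref{thm:classification}(i), so by part~(iii) the isomorphism must be one of those in \eqref{eq:DGOiso2}; in particular
\[w^\shrp=w(mi)\qquad\text{for some }i\in\Z.\]

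\emph{Step 2: the word identity.} Replacing $i$ by its residue modulo $n/m$ (using $w(n)=w$), write $j=mi$ with $0\le j<n$; then $m\mid j$ and, reading subscripts modulo $n$, $z_k^\shrp=z_{k+j}$ for all $k$. Applying $\shrp$ again and using this relation once more with $k$ replaced by $k+j$ gives $z_k=z_{k+2j}$ for all $k$; hence $w$ is fixed by cyclic rotation by $d:=\gcd(2j,n)$, so $w=u^{n/d}$ with $u=z_1\cdots z_d$. Since $m\mid 2j$ and $m\mid n$, we have $m\mid d$, so $u$ is an $m$-word; if $d<n$ this contradicts the non-periodicity of $w$. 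Therefore $n\mid 2j$, and as $0\le j<n$ either $j=0$ or $2j=n$. The case $j=0$ is impossible, since then $z_k^\shrp=z_k$, whereas $\shrp$ fixes no letter and $n\ge m\ge 1$. Hence $n$ is even and $j=n/2$; writing $w=w_0w_1$ with $|w_0|=|w_1|=n/2$, the relation $z_k^\shrp=z_{k+n/2}$ for $1\le k\le n/2$ says precisely that $w_1=w_0^\shrp$. Since $|w_0|=n/2=j$ is a multiple of $m$, the word $w_0$ is an $m$-word, so $w=w_0w_0^\shrp$, as required.

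The step I expect to be most delicate is the reduction in Step~1 from ``$V(\om,w^\shrp,g)$ is indecomposable'' to ``$g$ may be taken indecomposable with $g\neq x^{\deg g}$'', which is what allows Theorem~\ref{thm:classification}(iii) to be applied directly. This rests on the (standard) fact, implicit in \cite{DGO}, that the indecomposable decomposition of $V(\om,w',g')$ refines the decomposition of the cyclic $\K_\om[x;\tau^{n/m}]$-module determined by $g'$ while keeping the word $w'$ unchanged. Once this is granted, the remainder is the elementary combinatorics on words carried out in Step~2.
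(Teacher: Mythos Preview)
Your proof is correct and follows essentially the same route as the paper's: apply Theorem~\ref{thm:2ndkind_w} to obtain $V^\sharp\simeq V(\om,w^\shrp,g)$, invoke the classification to get a cyclic-shift relation $w^\shrp=w(mi)$, and then use the gcd/non-periodicity argument on words. Your Step~2 is in fact slightly cleaner than the paper's: you go directly from $w(2j)=w$ to $n\mid 2j$ and then rule out $j=0$, whereas the paper first proves a separate inequality $2lm\le n$ via a minimality argument before reaching the same conclusion. You are also more explicit than the paper about the point you flag as delicate (that one may take $g$ indecomposable with $g\neq x^{\deg g}$ so that Theorem~\ref{thm:classification}(iii) applies); the paper simply asserts ``by the classification we must have $w(lm)=w^\shrp$'' without comment.
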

\begin{proof}
Since $f$ is indecomposable and $f\neq x^d$ we have $a_1\neq 0$.
If $V\simeq V^\sharp$ then by Theorem \ref{thm:2ndkind_w}, $V\simeq V(\om,w^\shrp,g)$
for some $g\in\K_\om[x;\tau^{n/m}]$. Thus by the classification
in Theorem \ref{thm:classification} we must have
 $w(lm)=w^\shrp$
for some integer $l\ge 0$, chosen minimal. Clearly, $lm<n$.
 Since the operation $\shrp$ on the monoid $\mathbf{D}$ commutes with the $\Z$-action,
 we have
\begin{equation}\label{eq:w0cor}
 w(lm+k)=w(k)^\shrp\qquad\forall\; k\in\Z.
\end{equation}
 We claim that $2lm\le n$. Otherwise $lm<n<2lm$ and thus
 $0<n-lm<lm$. Also, $w(n-lm)=w(-lm)=w^\shrp$ since $w=w(-lm+lm)=w(-lm)^\shrp$
 by \eqref{eq:w0cor} with $k=-lm$. Thus the properties of
 the number $\frac{n}{m}-l$ contradicts the minimality of $l$.
Therefore $2lm\le n$ as claimed.

Now let $k=\mathrm{GCD}(2lm,n)$.
Trivially $w(n)=w$, and by \eqref{eq:w0cor},
 $w(2lm)=w(lm)^\shrp=w$. Hence $w(k)=w$ also. But $k | n$ and thus
 $w=(z_1z_2\cdots z_k)^{n/k}$. However $w$ is non-periodic and thus
 $n=k$, forcing $n=2lm$ so $w=w_0w_0^\shrp$ where $w_0=z_1z_2\cdots z_{lm}$ is
 an $m$-word.
\end{proof}

The following result describes the finitistic dual of a module
$V(\om,w,f)$ when $w$ has the special form $w=w_0w_0^\shrp$ as in
 Corollary \ref{cor:2ndkind}.

\begin{theorem}\label{thm:2ndkind_f}
Let $\om\in\Om$ be a finite real orbit with $m:=|B_\om|>0$.
Let $w_0\in\mathbf{D}\backslash\{\ep\}$ be an $m$-word and
put $l=|w_0|/m$ and $n=2|w_0|$.
Let $V=V(\om,w_0w_0^\shrp, f)$ where 
$f=\al_0+\al_1x+\cdots+\al_{d-1}x^{d-1}+\al_d x^d\in \K_\om[x;\tau^{n/m}]$ is any
element with $\al_0\neq 0\neq \al_d$.
Then $V^\sharp\simeq V(\om,w_0w_0^\shrp,f^\sharp)$, where
\begin{equation}\label{eq:2nd_fsharp}
f^\sharp =
\sum_{k=0}^d \{2lk\}\cdot \tau^{(2k+1)l}
 \big(\overline{\al_{d-k}} \big)
\cdot x^k.
\end{equation}
Here $\{k\}$ is a Pochhammer-type symbol:
\begin{equation}
\{k\}=\{k\}_{q,\tau}=q\tau(q)\cdots\tau^{k-1}(q)\in\K_\om,\quad k\in\Z_{\ge 0},
\end{equation}
where $q\in\K_\om\backslash\{0\}$ is given by
\begin{equation}\label{eq:qdef}
q=\si^{p_2+p_3+\cdots+p_m}(t_1)\si^{p_3+p_4+\cdots+p_m}(t_2)
\cdots \si^{p_m}(t_{m-1})t_m,
\end{equation}
\begin{equation}\label{eq:tidef}
t_i=\big(\si(t)\si^2(t)\cdots\si^{p_i-1}(t)\big)_{\mf_i}\qquad
\text{for $i=1,\ldots,m$,}
\end{equation}
where $p_i\in\Z_{>0}$ are minimal such that $\si^{p_i}(\mf_{i-1})=\mf_i$, $i=1,\ldots,m$.
\end{theorem}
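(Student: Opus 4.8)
The strategy is to mimic the proof of Theorem~\ref{thm:2ndkind_w}, but now carry along the structure constants of $f$ through the recursive basis change $[\mf,e_{ks}]\mapsto[\mf,f_{ks}]$ in order to pin down the polynomial $g$ explicitly. By Theorem~\ref{thm:2ndkind_w} we already know $V^\sharp\simeq V(\om,w_0w_0^\shrp,g)$ for \emph{some} $g=c_1+c_2x+\cdots+c_{d-1}x^{d-1}+x^d$ (after normalizing $f$ to be monic, or more precisely working with the un-normalized element and using the isomorphisms \eqref{eq:DGOiso2}), where the $c_r$ are determined by the single relation
\[
Y[\si(\mf_0),f_{11}] = -\sum_{r=1}^d c_r^\circ\cdot[\mf_0,f_{nr}],\qquad c_{d+1-r}=\tau^{r-d}(c_r^\circ).
\]
So the entire content is the computation of $Y[\si(\mf_0),f_{11}]$ in terms of the dual basis $[\mf,e_{ks}^\sharp]$, followed by re-expressing the answer in the $[\mf_0,f_{nr}]$-basis.

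**Key steps.** First I would trace the recursion \eqref{eq:2ndkind_fdef2_w} one full loop around the orbit, starting from $[\si(\mf_0),f_{11}]=[\si(\mf_0),e_{11}^\sharp]$ and applying $X$ (up to the scalars $\si(t)_\nf^{-1}$ at non-breaks) across each of the $m$ breaks of $w_0^\shrp$; since $w=w_0w_0^\shrp$, the letters $z_{k}^\shrp$ encountered in the second half $w_0^\shrp{}^\shrp=w_0$ are all the \emph{opposite} of those of $w_0$, which is exactly what makes the word come back to a $\shrp$-image of itself. Accumulating the scalars at the non-break weights over one period produces precisely the element $q\in\K_\om$ of \eqref{eq:qdef}: indeed $q$ is (the image in $\K_{\mf(\om)}$ of) the product $\si(t)\si^2(t)\cdots\si^p(t)$ grouped break-by-break via the $t_i$ of \eqref{eq:tidef}, which is the natural ``$\xi$'' for this orbit with breaks — compare $\xi$ in Theorem~\ref{thm:Vomfdual}. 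Going around $k$ times contributes $\{k\}=\{k\}_{q,\tau}=q\tau(q)\cdots\tau^{k-1}(q)$. Second, when the recursion reaches the wrap-around relation (fourth/fifth cases of \eqref{eq:2ndkind_dualY}, where $a_1\ne 0$ is used), the coefficients $\overline{\al_i}$ of $f$ enter, conjugated because $Y$ on $V^\sharp$ involves $r\mapsto r^*$; tracking the $\tau$-shifts picked up from the $n/m$-fold twisting of $\K_\om[x;\tau^{n/m}]$ and from going around $l$ ``break-steps'' per half-word gives the shift $\tau^{(2k+1)l}$ on $\overline{\al_{d-k}}$ and the reversal $d-k\leftrightarrow k$. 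Third, I would assemble these into the formula $g = \sum_k \{2lk\}\,\tau^{(2k+1)l}(\overline{\al_{d-k}})\,x^k = f^\sharp$ — modulo the usual normalization by a unit, absorbed via \eqref{eq:DGOiso2} with a suitable $\tau^i$ — and invoke the uniqueness part of Theorem~\ref{thm:classification}(iii) to conclude $V^\sharp\simeq V(\om,w_0w_0^\shrp,f^\sharp)$. Finally, as in Theorem~\ref{thm:2ndkind_w}, one checks that $\ph$ intertwines $X$ automatically by construction and intertwines $Y$ everywhere except at $[\si(\mf_0),e_{11}]$, where the computation above was arranged precisely to make it work.

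**Main obstacle.** The hard part is purely bookkeeping: correctly tracking the three interacting shifts — the $\si$-conjugation scalars accumulated at non-break weights (giving $q$ and hence $\{k\}$ and $\{2lk\}$), the $\tau^{n/m}$-twist inherent to $\K_\om[x;\tau^{n/m}]$, and the extra $\tau^l$-shift incurred per traversal of a half-word of length $lm$ — and verifying that they combine into exactly $\{2lk\}$ and $\tau^{(2k+1)l}$ rather than some neighbouring exponent. The index-reversal $\al_{d-k}$ (already visible in the simpler Theorem~\ref{thm:Vomfdual}, where $f^\sharp$ has $\overline{\al_{d-k}}$ in degree $k$) comes from the fact that the dual basis reverses the role of $X^p$ and $Y^p$, so the companion-matrix relation $F_f^{-1}e_1 = -\al_0^{-1}(\al_1e_1+\cdots+\al_de_d)$ reads off the coefficients of $f$ in reverse order; here the same phenomenon occurs through the last two cases of \eqref{eq:2ndkind_dualY}. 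A secondary subtlety is that $f$ as given need not be monic ($\al_d\ne 0$ but possibly $\al_d\ne 1$), so one should either pass to the monic representative at the start or, more cleanly, phrase the final similarity statement up to the isomorphisms \eqref{eq:DGOiso2}, exactly as was done at the end of the proof of Theorem~\ref{thm:Vomfdual}.
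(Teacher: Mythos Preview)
Your plan is correct and follows essentially the same route as the paper: trace the recursive basis $[\mf,f_{ks}]$ once around the orbit, accumulate the non-break scalars into $q$ and $\{k\}$, pick up the conjugated coefficients $\overline{\al_i}$ at the wrap-around via the last two cases of \eqref{eq:2ndkind_dualY}, and read off $g$. Two minor points: the paper bakes the $lm$-shift into the construction by starting at $[\si(\mf_0),f_{11}]=[\si(\mf_0),e_{lm+1,1}^\sharp]$ rather than $e_{11}^\sharp$ (so it lands directly on $w_0w_0^\shrp$ instead of $w_0^\shrp w_0$ and then shifting via \eqref{eq:DGOiso2}), and it packages the ``one full loop'' you describe into an explicit operator $Z=Z_n\cdots Z_1$ on $(V^\sharp)_{\mf_0}^{(lm)}$, which makes the re-expression in the $[\mf_0,f_{nr}]$-basis tractable via the identity $[\mf_0,f_{ns}]=Z^{s-1}[\mf_0,f_{n1}]$ and an induction on $s$; also, $q$ is not quite the full product $\si(t)\cdots\si^p(t)$ (that would vanish at the breaks) but the product with the break factors omitted, as the formula \eqref{eq:tidef} for $t_i$ shows.
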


Combining Corollary \ref{cor:2ndkind}, Theorem \ref{thm:2ndkind_f}
and Proposition \ref{prop:bij},
we obtain the following, which is the main result in this section.
\begin{theorem}\label{thm:2ndkind}
Let $V$ be any indecomposable weight $A$-module of the type $V(\om,w,f)$ with $\om$ real.
Thus $\om\in\Om$ is a finite real orbit with $m:=|B_\om|>0$,
$w\in\mathbf{D}\backslash\{\ep \}$ is a non-periodic $m$-word,
and $f=\al_0+\al_1 x+\cdots+\al_d x^d \in \K_\om[x;\tau^{n/m}]$,
$\al_d\neq 0$, is an indecomposable element not equal to $x^d$.
Then $V$ is pseudo-unitarizable iff $w=w_0w_0^\shrp$
for some $m$-word $w_0\in\mathbf{D}\backslash\{\ep\}$ and
$f$ is similar to $f^\sharp$ in $\K_\om[x;\tau^{n/m}]$,
where $f^\sharp $ is given by \eqref{eq:2nd_fsharp}.
\end{theorem}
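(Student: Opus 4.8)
The plan is to deduce Theorem \ref{thm:2ndkind} directly by combining the structural results already established for modules of type $V(\om,w,f)$. By Proposition \ref{prop:bij}, $V$ is pseudo-unitarizable if and only if $V^\sharp\simeq V$, so the entire task reduces to characterizing when $V(\om,w,f)^\sharp\simeq V(\om,w,f)$ in terms of the parameters $w$ and $f$.

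First I would invoke Corollary \ref{cor:2ndkind}: since $f$ is indecomposable and $f\neq x^d$, its constant term $\al_0$ is nonzero (after normalizing $\al_d=1$, or directly using $\al_0\neq 0$), and so the corollary applies and gives that $V\simeq V^\sharp$ forces $w=w_0w_0^\shrp$ for some $m$-word $w_0\in\mathbf{D}\setminus\{\ep\}$. This handles the necessity of the condition on $w$. For the converse direction and the condition on $f$: assume $w=w_0w_0^\shrp$. Then Theorem \ref{thm:2ndkind_f} applies (note $(w_0w_0^\shrp)^\shrp = w_0^\shrp w_0 = (w_0w_0^\shrp)(lm)$, so up to the $\Z$-action the word is $\shrp$-invariant, consistent with the hypothesis of that theorem), and tells us $V^\sharp\simeq V(\om,w_0w_0^\shrp,f^\sharp)$ with $f^\sharp$ given explicitly by \eqref{eq:2nd_fsharp}. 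Now apply the classification, Theorem \ref{thm:classification}(iii): two modules $V(\om,w,f)$ and $V(\om,w,g)$ with the same word $w$ are isomorphic precisely when $f$ is similar to $g$ in $\K_\om[x;\tau^{n/m}]$, up to the twist $\eqref{eq:DGOiso2}$ by a power of $\tau$ which, combined with the corresponding shift of $w$ by $mi$, only matters when $w$ actually changes; for $w=w(mi)$ to hold we need $mi$ a period-related shift, and since $w=w_0w_0^\shrp$ is non-periodic the relevant shifts are multiples of $n$, reducing $\tau^i$ to $\tau^{(n/m)\mathbb{Z}}$ under which $f$ is already similar to its image. Hence $V^\sharp\simeq V$ if and only if $f$ is similar to $f^\sharp$ in $\K_\om[x;\tau^{n/m}]$.

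Putting the two directions together: if $V\simeq V^\sharp$ then by Corollary \ref{cor:2ndkind} we have $w=w_0w_0^\shrp$, and then by Theorem \ref{thm:2ndkind_f} together with Theorem \ref{thm:classification} we get that $f$ is similar to $f^\sharp$; conversely, if $w=w_0w_0^\shrp$ and $f\sim f^\sharp$, then Theorem \ref{thm:2ndkind_f} gives $V^\sharp\simeq V(\om,w,f^\sharp)\simeq V(\om,w,f)=V$ by \eqref{eq:DGOiso2}. Finally Proposition \ref{prop:bij} converts the statement $V\simeq V^\sharp$ into pseudo-unitarizability. This completes the argument.

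The main obstacle I anticipate is the bookkeeping around the isomorphism relation $\eqref{eq:DGOiso2}$: one must check carefully that the twist $V(\om,w,f)\simeq V(\om,w(mi),\tau^i(g))$ does not create spurious isomorphisms between $V(\om,w_0w_0^\shrp,f^\sharp)$ and $V(\om,w_0w_0^\shrp,f)$ beyond genuine similarity of $f$ and $f^\sharp$ — equivalently, that since $w_0w_0^\shrp$ is non-periodic, the only shifts $i$ with $w(mi)=w$ are multiples of $n/m$, for which $\tau^i$ preserves similarity classes by the remark following Theorem \ref{thm:classification}. Everything else is a direct citation of the already-proved Corollary \ref{cor:2ndkind}, Theorem \ref{thm:2ndkind_f}, and Proposition \ref{prop:bij}.
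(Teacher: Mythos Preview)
Your proposal is correct and matches the paper's own approach exactly: the paper states that Theorem \ref{thm:2ndkind} follows by ``combining Corollary \ref{cor:2ndkind}, Theorem \ref{thm:2ndkind_f} and Proposition \ref{prop:bij}'', and you have spelled out precisely this combination. Your extra care with the isomorphism relation \eqref{eq:DGOiso2} --- noting that non-periodicity of $w_0w_0^\shrp$ forces any shift $mi$ fixing $w$ to be a multiple of $n$, so that $\tau^i$ preserves similarity classes --- is a detail the paper leaves implicit but which is handled correctly here.
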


\begin{remark}
 From Theorem \ref{thm:2ndkind_f} it follows that $f^{\sharp\sharp}$ is similar to $f$.
 This is not apparent from \eqref{eq:2nd_fsharp} but by comparing the coefficients of $f$
 and $f^{\sharp\sharp}$ one can verify that
 \[f^{\sharp\sharp}=\{(2d+1)l\}\cdot\tau^{\frac{n}{m}(m+1)}(f)\cdot \{l\}^{-1}.\]
 Using that $\tau^{n/m}(f)$ is similar to $f$ in $\K_\om[x;\tau^{n/m}]$ we conclude that
 indeed $f^{\sharp\sharp}\sim f$.
\end{remark}

\begin{proof}[Proof of Theorem \ref{thm:2ndkind_f}.]
Let $z_1z_2\cdots z_n=w$.
It will also be convenient to define $z_j=z_i$ when $j\equiv i\;\text{(mod $n$)}$.
Assume for a moment that we have proved \eqref{eq:2nd_fsharp} for the case
$z_1=x$ and suppose that $z_1=y$. 
By the shift isomorphism \eqref{eq:DGOiso2},
which holds also for decomposable $f$,
we have
\begin{equation} \label{eq:2nd_yfromx1}
V\simeq V(\om,w(-lm),\tau^{-l}(f))=V(\om,w_0^\shrp w_0,\tau^{-l} (f))
\end{equation}
where $\tau^{-l} (f)=\tau^{-l} (\al_0)
+\tau^{-l} (\al_1)x+\cdots + \tau^{-l} (\al_d) x^d$.
By the assumption we then have
\begin{equation} \label{eq:2nd_yfromx2}
V(\om,w_0^\shrp w_0, \tau^{-l} (f))^\sharp \simeq V(\om,w_0^\shrp w_0, g),
\end{equation}
where 
\[g=\sum_{k=0}^d \{2lk\}\cdot \tau^{(2k+1)l} \Big(
 \overline{ \tau^{-l} \big( \al_{d-k} \big) } \Big) \cdot x^k =
\sum_{k=0}^d \tau^{-l}\Big( \tau^{l}\big( \{2lk\} \big)\cdot \tau^{(2k+1)l}
 \big( \overline{\al_{d-k}} \big)\Big) \cdot x^k.
\]
Again by \eqref{eq:DGOiso2},
\begin{equation} \label{eq:2nd_yfromx3}
V(\om,w_0^\shrp w_0,g)\simeq V(\om,w_0w_0^\shrp,\tau^{l}(g)).
\end{equation}
From the formula
\[\tau^{l}\big( \{ 2lk\}\big)=\{l\}^{-1}\cdot \{ 2lk\} \cdot \tau^{2lk}\big( \{l \}\big)\]
we see that $\tau^{l}(g) = \{ l \}^{-1} \cdot f^\sharp \cdot \{ l \}$ which is
similar to $f^\sharp$. Combining this fact with the isomorphisms
\eqref{eq:2nd_yfromx1}-\eqref{eq:2nd_yfromx3} we deduce
that $V^\sharp\simeq V(\om,w,f^\sharp)$.
Therefore the case $z_1=y$ follows from the case $z_1=x$.

Thus we assume for the rest of the proof that $z_1=x$.
\flushleft\textbf{Step 1.}
Put $a_k=\al_{k-1}/\al_d$ for $k=1,2,\ldots,d$.
Let us replace $f$ by $(1/\al_d)f=a_1+a_2x+\cdots+a_d x^{d-1} + x^d$.
This does not change the isomorphism class of the module $V$.
As in the proof of Theorem \ref{thm:2ndkind_w},
we can construct a basis $[\mf,f_{ks}]$ for $V^{\sharp}$ such that
\begin{align}\label{eq:2pf_phiiso}
\varphi : V(\om,w_0w_0^\shrp,g) &\to V^\sharp \\
 [\mf,e_{ks}] &\mapsto [\mf,f_{ks}] \nonumber 
\end{align}
is an $A$-module isomorphism for some $g$.
We use the decomposition \eqref{eq:Vsharpdecomp}.
We put also $(V^\sharp)_\mf^{(l)}=(V^\sharp)_\mf^{(k)}$ whenever
 $l\in\Z, l\equiv k \;\text{(mod $n$)}.$
By relation \eqref{eq:2ndkind_dualY}, which holds in $V^\sharp$ since $z_1=x$,
it follows that if $1\le k\le n$ and $z_k=y$,
so that $z_{lm+k}=z_k^\shrp=x$,
then 
\[Y:(V^\sharp)_{\si(\mf_{k-1})}^{(lm+k)}\to (V^\sharp)_{\mf_{k-1}}^{(lm+k-1)}\]
is bijective. For the case $k=lm+1$ it is essential that $a_1\neq 0$.
Put
\begin{equation}\label{eq:2ndkind_fdef1_f}
 [\si(\mf_0),f_{11}] = [\si(\mf_0),e_{lm+1,1}^\sharp]
\end{equation}
and recursively
\begin{equation}\label{eq:2ndkind_fdef2_f}
 [\mf,f_{ks}] =\begin{cases}
\si(t)_\mf^{-1} X[\si^{-1}(\mf),f_{ks}],
  & \si^{-1}(\mf)\notin B_\om,\\
X[\si^{-1}(\mf),f_{k-1,s}],
  &\si^{-1}(\mf)\in B_\om, k>1, z_k=x,\\
\big( Y|_{(V^\sharp)_\mf^{(k+lm)}}\big)^{-1} [\si^{-1}(\mf),f_{k-1,s}],
  &\si^{-1}(\mf)\in B_\om, z_k=y, (k>1),\\
X[\si^{-1}(\mf),f_{1,s-1}],
  &\si^{-1}(\mf)\in B_\om, k=1, (z_1=x).
\end{cases}
\end{equation}
By induction, $[\mf,f_{ks}]\in (V^\sharp)_{\mf}^{(lm+k)}$ for each $\mf\in\om$, $s=1,\ldots,d$,
 $k=1,\ldots,n$, $k\equiv j(\mf)\;\text{(mod $m$)}$.

\flushleft\textbf{Step 2.}
We will now show that the $g$ such that $V(\om,w_0w_0^\shrp,g)\simeq V^\sharp$,
is similar to $f^\sharp$, given by \eqref{eq:2nd_fsharp}.
Define an operator $Z:(V^\sharp)_{\mf_0}^{(lm)}\to (V^\sharp)_{\mf_0}^{(lm)}$ by
\begin{equation}
Z=Z_n\cdots Z_2Z_1,
\end{equation}
where $Z_i:(V^\sharp)_{\mf_{i-1}}^{(lm+i-1)}\to (V^\sharp)_{\mf_i}^{(lm+i)}$ are given by
\begin{equation}
Z_i=\begin{cases}
(t_i)^{-1}X^{p_i},&\text{if $z_i=x$,}\\
(t_i)^{-1}X^{p_i-1}\big(Y|_{(V^\sharp)_{\si(\mf_{i-1})}^{(lm+i)}}\big)^{-1},&\text{if $z_i=y$.}
\end{cases}
\end{equation}
Recall that $\mf_0,\mf_1,\ldots,\mf_{m-1}$ are the breaks in $\om$,
ordered such that $\mf_{i-1}<\mf_i<\mf_{i+1}$ for $0< i < m-1$.
The weight diagram in Figure \ref{fig:Vtyp2} can be used to visualize
the action of the operator $Z$.
For an interpretation of the operator $Z$, see also Remark \ref{rem:interpretZ}.
It has the following properties:
\begin{align}
\label{eq:2nd_Z2}
Z[\mf_0,e_{lm,1}^\sharp] &= [\mf_0,f_{n1}],\\
\label{eq:2nd_Zs}
[\mf_0,f_{ns}] &= Z^{s-1}[\mf_0,f_{n1}],\qquad\text{for $s=1,2,\ldots,d$.}
\end{align}
Let us prove \eqref{eq:2nd_Z2}. First
we prove that
\begin{equation}\label{eq:2nd_Z2step1}
Z_1[\mf_0,e_{lm,1}^\sharp]=[\mf_1,f_{11}].
\end{equation}
Since $z_1=x$, using
relation \eqref{eq:2ndkind_dualX} and that $z_{lm+1}=z_1^\shrp=y$,
we have
\begin{equation}\label{eq:2nd_Z2step987}
Z_1[\mf_0,e_{lm,1}^\sharp]=(t_1)^{-1}X^{p_1}[\mf_0,e_{lm}^\sharp]=
(t_1)^{-1}X^{p_1-1}[\si(\mf_0),e_{lm+1,1}^\sharp].
\end{equation}
By definition \eqref{eq:tidef} of $t_1$ and of the vector
$[\si(\mf_0),f_{11}]$, the right hand side of \eqref{eq:2nd_Z2step987} is equal to
\begin{equation}\label{eq:2nd_Z2step988}
\big(\si(t)\si^2(t)\cdots\si^{p_1-1}(t)\big)_{\mf_1}^{-1}X^{p_1-1}[\si(\mf_0),f_{11}].
\end{equation}
Using that $\si(r)_{\si(\mf)}Xv=Xr_\mf v$ for any weight vector $v$
of weight $\mf$ and any $r\in R$,
where $r_\mf$ denotes $r+\mf\in R/\mf$ as usual,
the expression \eqref{eq:2nd_Z2step988} can be rearranged into
(recall that $\si^{p_1}(\mf_0)=\mf_1$)
\begin{equation}\label{eq:2nd_Z2step989}
\big(\si(t)_{\si^{p_1}(\mf_0)}^{-1}X\big)
\big(\si(t)_{\si^{p_1-1}(\mf_0)}^{-1}X\big)\cdots
\big(\si(t)_{\si^2(\mf_0)}^{-1}X\big)[\si(\mf_0),f_{11}].
\end{equation}
By the recursive definition, \eqref{eq:2ndkind_fdef2_f}, the expression in 
\eqref{eq:2nd_Z2step989} is equal to $[\si^{p_1}(\mf_0),f_{11}]=[\mf_1,f_{11}]$,
proving \eqref{eq:2nd_Z2step1}.
Similarly one proves that
\[Z_k[\mf_{k-1},f_{k-1,1}]=[\mf_k,f_{k1}]
\quad\text{for $k=2,3,\ldots,n$.}\]
Combining this with \eqref{eq:2nd_Z2step1},
\eqref{eq:2nd_Z2} is proved.

In the same way one shows that $[\mf_0,f_{ns}]=Z[\mf_0,f_{n,s-1}]$
for $s=2,3,\ldots,d$. Then \eqref{eq:2nd_Zs} follows.

\flushleft\textbf{Step 3.}
We have
\begin{equation}
\label{eq:2nd_Z3}
Z[\mf_0,e_{lm,s}^\sharp] =
\begin{cases}
\{ 2l\}^{-1}\cdot
\big(-\tau^l(\overline{a_{s+1}}/\overline{a_1})[\mf_0,e_{lm,1}^\sharp ]+[\mf_0,e_{lm,s+1}^\sharp ]\big),&
\text{if $s<d$,}\\
-\{ 2l\}^{-1}\tau^l(1/\overline{a_1}) [\mf_0,e_{lm,1}^\sharp ],&\text{if $s=d$.}
\end{cases}
\end{equation}
To prove this, we first prove that if $1\le k\le lm$, so that $lm+k-1<n$, then
\begin{equation}\label{eq:2nd_delsteg}
Z_k [\mf_{k-1},e_{lm+k-1,s}^\sharp ]= (t_k)^{-1} [\mf_k, e_{lm+k,s}^\sharp ]
\end{equation}
for any $1\le s\le d$.
Indeed, if $z_k=x$, then
\begin{align*}
Z_k&[\mf_{k-1},e_{lm+k-1,s}^\sharp ]=\\
 &=(t_k)^{-1} X^{p_k}[\mf_{k-1},e_{lm+k-1,s}^\sharp ]
 &&\text{by definition of $Z_k$,}\\
&=(t_k)^{-1} X^{p_k-1}[\si(\mf_{k-1}),e_{lm+k,s}^\sharp ]
 &&\text{by \eqref{eq:2ndkind_dualX}, since $z_{lm+k}=z_k^\shrp=y$,}\\
&=(t_k)^{-1}[\mf_k,e_{lm+k,s}^\sharp ],
 &&\text{by first case in \eqref{eq:2ndkind_dualX}.}
\end{align*}
We used that $\si^{p_k}(\mf_{k-1})=\mf_k$ in the last step.
Similarly, if $z_k=y$, then 
\[ Y[\si(\mf_{k-1}),e_{lm+k,s}^\sharp ]=[\mf_{k-1},e_{lm+k-1,s}^\sharp ]\]
by \eqref{eq:2ndkind_dualY} since $z_{lm+k}=z_k^\shrp=x$ and
$1<lm+k\le n$. Therefore
\[ \big(Y|_{(V^\sharp )_{\si(\mf_{k-1})}^{(lm+k)}}\big)^{-1}
[\mf_{k-1},e_{lm+k-1,s}^\sharp ]=
[\si(\mf_{k-1}),e_{lm+k,s}^\sharp ]\]
and
\begin{align*}
Z_k[\mf_{k-1},e_{lm+k-1,s}^\sharp ]&=(t_k)^{-1} X^{p_k-1}
 \big(Y_{(V^\sharp )_{\si(\mf_{k-1})}^{(lm+k)}}\big)^{-1}
[\mf_{k-1},e_{lm+k-1,s}^\sharp ]=\\
&=(t_k)^{-1}X^{p_k-1}[\si(\mf_{k-1}),e_{lm+k,s}^\sharp ]=\\
&=(t_k)^{-1}[\mf_k,e_{lm+k,s}^\sharp ].
\end{align*}
This proves \eqref{eq:2nd_delsteg}.

Using \eqref{eq:2nd_delsteg} repeatedly for $k=1,2,\ldots ,lm$ while
moving the $t_i$'s to the left, we have
\begin{align*}
&Z_mZ_{m-1}\cdots Z_2Z_1[\mf_0,e_{lm,s}^\sharp ]=\\
&=Z_mZ_{m-1}\cdots Z_2
\cdot (t_1)^{-1}[\mf_1,e_{lm+1,s}^\sharp ]=\\
&=\si^{p_2+p_3+\cdots +p_m}(t_1)^{-1} Z_mZ_{m-1}\cdots Z_2 [\mf_1,e_{lm+1,s}^\sharp ]
=\cdots =\\
&=\si^{p_2+p_3+\cdots +p_m}(t_1)^{-1}
\si^{p_3+p_4+\cdots +p_m}(t_2)^{-1}\cdots \si^{p_m}(t_{m-1})^{-1}\cdot (t_m)^{-1}\cdot\\
&\quad \cdot [\mf_m,e_{lm+m,s}^\sharp ] =\\
&= q^{-1}\cdot [\mf_0,e_{(l+1)m,s}^\sharp ].
\end{align*}
Here we used that, from the definition of $Z_k$, we have
 $Z_k\la v=\si^{p_k}(\la) Z_k v$ for any $k\in\{1,\ldots,n\}$
 (recall that, by convention, $p_{km+j}=p_j$ if $1\le j\le m$)
 any $\la\in R/\mf$ and any
 weight vector $v$ of weight $\mf$, where $\si$ here denotes the map
$R/\mf\to R/\si(\mf)$ induced by $\si$.
We would like to continue, acting by $Z_{m+1}$, then $Z_{m+2}$ and so on
up to $Z_{lm}$. First we need to move the $q^{-1}$ to the left. 
For any $k\in\{1,\ldots,l\}$ we have
 $Z_{km}Z_{km-1}\cdots Z_{(k-1)m+1}\la v=\tau(\la) Z_{km}Z_{km-1}\cdots Z_{(k-1)m+1} v$
since $\tau=\si^p$ and $p=p_1+p_2+\cdots p_m$. Therefore,
using \eqref{eq:2nd_delsteg} as in the above calculation we get
\begin{align}
Z_{lm}Z_{lm-1}\cdots Z_1 [\mf_0,e_{lm,s}^\sharp ] &= \nonumber 
 Z_{lm}Z_{lm-1}\cdots Z_{m+1}\cdot q^{-1}[\mf_0,e_{(l+1)m,s}^\sharp ]= \nonumber \\
 &=\tau^{l-1}(q^{-1})Z_{lm}Z_{lm-1}\cdots Z_{m+1} [\mf_0,e_{(l+1)m,s}^\sharp ]=\nonumber \\
 &\ldots\nonumber \\
 &=\tau^{l-1}(q^{-1})\tau^{l-2}(q^{-1})\cdots \tau(q^{-1})q^{-1}\cdot [\mf_0,e_{2lm,s}^\sharp ]=\nonumber \\
 &=\{ l\}^{-1}\cdot [\mf_0,e_{n,s}^\sharp ]. \label{eq:2nd_delstegA}
\end{align}
It remains to calculate $Z_{2lm}Z_{2lm-1}\cdots Z_{lm+1}[\mf_0,e_{n,s}^\sharp ]$.
To calculate $Z_{lm+1}[\mf_0,e_{n,s}^\sharp ]$ we need to find, by definition
of $Z_{lm+1}$,
\[\big(Y|_{(V^\sharp )_{\si(\mf_0)}^{(1)}}\big)^{-1} [\mf_0,e_{ns}^\sharp ] \]
because $z_{lm+1}=z_1^\shrp = y$. By \eqref{eq:2ndkind_dualY},
\begin{align}
Y[\si(\mf_0),e_{1,s+1}^\sharp ] &=[\mf_0,e_{n,s}^\sharp ]- \overline{ a_{s+1} }
 \cdot [\mf_0, e_{n,d}^\sharp ],\quad\text{if $s<d$,}\\
Y[\si(\mf_0),e_{1,1}^\sharp ] &=-\overline{a_1}\cdot [\mf_0, e_{n,d}^\sharp ].
\end{align}
Therefore
\begin{multline}\label{eq:2nd_delsteg2}
\big( Y|_{(V^\sharp)_{\si(\mf_0)}^{(1)}}\big) ^{-1} [\mf_0,e_{n,s}^\sharp ]=\\=
\begin{cases}
[\si(\mf_0),e_{1,s+1}^\sharp ] - \si(\overline{ a_{s+1} }/\overline{ a_1} )\cdot
 [\si(\mf_0),e_{1,1}^\sharp ], & s<d,\\
-\si(1/\overline{ a_1 })\cdot [\si(\mf_0),e_{1,1}^\sharp ],& s=d.
\end{cases}
\end{multline}
Applying $(t_1)^{-1}X^{p_1-1}$ to both sides of \eqref{eq:2nd_delsteg2}
we deduce that
\begin{equation}
Z_{lm+1}[\mf_0,e_{n,s}^\sharp ]=
(t_1)^{-1}\cdot
\begin{cases}
[\mf_1,e_{1,s+1}^\sharp ] - \si^{p_1}(\overline{a_{s+1}}/\overline{a_1} )\cdot
 [\mf_1,e_{1,1}^\sharp ], & s<d,\\
-\si^{p_1}(1/\overline{a_1})\cdot [\mf_1,e_{1,1}^\sharp ],& s=d.
\end{cases}
\end{equation}
Similarly to relation \eqref{eq:2nd_delsteg}, we have the formula
\begin{equation}\label{eq:2nd_delsteg3}
Z_{lm+k}[\mf_{k-1},e_{k-1,s}^\sharp ]=(t_k)^{-1} [\mf_k, e_{k,s}^\sharp ]
\quad\text{for $1<k\le lm$ and $1\le s\le d$,}
\end{equation}
which can be proved using \eqref{eq:2ndkind_dualX}, \eqref{eq:2ndkind_dualY}.
Note that $t_{lm+k}=t_k$ by the notational assumptions on $\mf_k$ and $t_k$.
Using \eqref{eq:2nd_delsteg3} repeatedly we get
\begin{multline}
Z_{(l+1)m}Z_{(l+1)m-1}\cdots Z_{lm+1}[\mf_0,e_{n,s}^\sharp ]=\\
=q^{-1}\cdot
\begin{cases}
[\mf_0,e_{m,s+1}^\sharp ] - \tau(\overline{a_{s+1}}/\overline{a_1} )\cdot
 [\mf_0,e_{m,1}^\sharp ], & s<d,\\
-\tau(1/\overline{a_1})\cdot [\mf_0,e_{m,1}^\sharp ],& s=d.
\end{cases}
\end{multline}
Repeating we get
\begin{multline}\label{eq:2nd_delstegB}
Z_{2lm}Z_{2lm-1}\cdots Z_{lm+1}[\mf_0,e_{n,s}^\sharp ]=\\
=\{l\}^{-1}\cdot
\begin{cases}
[\mf_0,e_{lm,s+1}^\sharp ] - \tau^l(\overline{a_{s+1}}/\overline{ a_1} )\cdot
 [\mf_0,e_{lm,1}^\sharp ], & s<d,\\
-\tau^l(1/\overline{a_1})\cdot [\mf_0,e_{lm,1}^\sharp ],& s=d.
\end{cases}
\end{multline}
Thus, combining \eqref{eq:2nd_delstegA} and \eqref{eq:2nd_delstegB}
we obtain \eqref{eq:2nd_Z3} as desired.

\flushleft\textbf{Step 4.}
Set $b_s=-\overline{a_s}/\overline{a_1}$ for $2\le s\le d$ and $b_1=-1/\overline{a_1}$.
We claim that for $1\le s<d$, there are constants $C_{s1},C_{s2},\ldots,C_{ss}\in\K_\om$
such that
\begin{multline} \label{eq:2nd_ind}
[\mf_0,f_{ns}]=C_{s1}\tau^{3l}(b_s)[\mf_0,f_{n1}]
 +\cdots + C_{s,s-1}\tau^{l+2l(s-1)}(b_2)[\mf_0,f_{n,s-1}]+\\
 +C_{s,s}\big(\tau^l(b_{s+1})[\mf_0,e_{lm,1}^\sharp]+[\mf_0,e_{lm,s+1}^\sharp]\big)
\end{multline}
We prove this by induction on $s$.
If $s=1$ we can take
\begin{equation}\label{eq:2nd_C11}
C_{11}=\{2l\}^{-1}
\end{equation}
by \eqref{eq:2nd_Z2} and \eqref{eq:2nd_Z3}.
Assume \eqref{eq:2nd_ind} holds for some $s<d-1$. Then, using
\eqref{eq:2nd_Zs} and that $Z\la=\tau^{2l}(\la)Z$ for any $\la\in\K_{\mf_0}$, we have
\begin{align*}
[&\mf_0,f_{n,s+1}]=Z[\mf_0,f_{ns}]=\\
&=
\tau^{2l}(C_{s1})\tau^{5l}(b_s)Z[\mf_0,f_{n1}]+\cdots+
 \tau^{2l}(C_{s,s-1})\tau^{l+2ls}(b_2)Z[\mf_0,f_{n,s-1}]+\\
&\quad +\tau^{2l}(C_{s,s})\big(\tau^{3l}(b_{s+1})Z[\mf_0,e_{lm,1}^\sharp]
+Z[\mf_0,e_{lm,s+1}^\sharp]\big)
\end{align*}
By \eqref{eq:2nd_Z2},\eqref{eq:2nd_Zs} and \eqref{eq:2nd_Z3} this equals
\begin{align*}
&
\tau^{2l}(C_{s,s})\tau^{3l}(b_{s+1})[\mf_0,f_{n1}]+\\&\quad+
\tau^{2l}(C_{s1})\tau^{5l}(b_s)[\mf_0,f_{n2}]+\cdots+
 \tau^{2l}(C_{s,s-1})\tau^{l+2ls}(b_2)[\mf_0,f_{n,s}]+\\
&\quad +\tau^{2l}(C_{s,s})\{2l\}^{-1}\cdot 
 \big(\tau^l(b_{s+2})[\mf_0,e_{lm,1}^\sharp]+[\mf_0,e_{lm,s+2}^\sharp]\big).
\end{align*}
Thus we seek the solution to the following system of equations
\begin{align}\label{eq:2nd_C0}
C_{s+1,1}&=\tau^{2l}(C_{s,s}),\\
\label{eq:2nd_C1}
C_{s+1,r}&=\tau^{2l}(C_{s,r-1}),\; 2\le r\le s,\\
\label{eq:2nd_C2}
C_{s+1,s+1}&=\tau^{2l}(C_{s,s}) \{2l\}^{-1}.
\end{align}
From \eqref{eq:2nd_C2},\eqref{eq:2nd_C11} we deduce
\begin{equation}\label{eq:2nd_Css}
C_{s,s}= \{2ls\}^{-1}
\qquad 1\le s <d.
\end{equation}
Repeated use of \eqref{eq:2nd_C1} gives
For $1\le r< s<d$ we have
\begin{align*}
C_{s,r}&=\tau^{2l}(C_{s-1,r-1})=\cdots=\tau^{2l(r-1)}(C_{s-r+1,1})
 &\text{by \eqref{eq:2nd_C1}}\\
&=\tau^{2lr}(C_{s-r,s-r})
 &\text{by \eqref{eq:2nd_C0}}\\
&= \{2lr\} \{2ls\}^{-1}
 &\text{by \eqref{eq:2nd_Css}.}
\end{align*}
Substituting this and \eqref{eq:2nd_Css} into \eqref{eq:2nd_ind} we obtain
that, for $1\le s<d$,
\begin{align}
[\mf_0,f_{ns}]&=
\{2l\}\{2ls\}^{-1}
 \cdot\tau^{3l}(b_s)\cdot [\mf_0,f_{n1}]+\nonumber \\
&+
\{4l\}\{2ls\}^{-1}
 \cdot\tau^{5l}(b_{s-1})\cdot [\mf_0,f_{n2}]+\nonumber \\
&\quad\cdots\nonumber \\
&+
\{2l(s-1)\}\{2ls\}^{-1}
 \cdot\tau^{l+2l(s-1)}(b_2)
 \cdot [\mf_0,f_{n,s-1}]+\nonumber \\
&+
\{2ls\}^{-1}
\big(\tau^l(b_{s+1})[\mf_0,e_{lm,1}^\sharp]+[\mf_0,e_{lm,s+1}^\sharp]\big).
\label{eq:2nd_fns}
\end{align}
In particular, taking $s=d-1$ and applying $Z$ we have
\begin{align*}
[\mf_0,f_{nd}]&=Z[\mf_0,f_{n,d-1}]=\\
&=\{4l\}\{2ld\}^{-1}
\cdot\tau^{5l}(b_{d-1})\cdot[\mf_0,f_{n2}]+\nonumber\\
&+
\{6l\}\{2ld\}^{-1}
\cdot\tau^{7l}(b_{d-2})\cdot[\mf_0,f_{n3}]+\nonumber\\
&\quad \cdots \nonumber \\
&+
\{2l(d-1)\}\{2ld\}^{-1}
\cdot\tau^{l+2l(d-1)}(b_2)\cdot[\mf_0,f_{n,d-1}]+\nonumber\\
&+
\{2l\}\{2ld\}^{-1}
\cdot
\big(\tau^{3l}(b_d)[\mf_0,f_{n1}]+
\{2l\}^{-1}
\tau^l(b_1)
[\mf_0,e_{lm,1}^\sharp]\big),
\end{align*}
where we applied \eqref{eq:2nd_Z3} in the last term.
Hence,
using that
\[X[\mf_0,e_{lm,1}^\sharp]=[\si(\mf_0),f_{11}]=
[\si(\mf_0),e_{lm+1,1}^\sharp ] \]
by \eqref{eq:2ndkind_dualX} and that $z_{lm+1}=z_1^\shrp=y$,
together with the relation (recall $\varphi$ from \eqref{eq:2pf_phiiso})
\begin{align*}
X[\mf_0,f_{ns}]&=X\varphi\big([\mf_0,e_{ns} ]\big)=
\varphi\big(X[\mf_0,e_{ns} ]\big)=\\
 &=\varphi\big([\si(\mf_0),e_{1,s+1} ]\big)= [\si(\mf_0),f_{1,s+1}]
\end{align*}
holding for $s<d$, we obtain that
\begin{align*}
X[\mf_0,f_{nd}]&=
\si\big( \{2ld\}^{-1}\tau^l(b_1)\big)\cdot [\si(\mf_0),f_{11}]+\nonumber\\
&+\si\big( \{2l\}\{2ld\}^{-1}\tau^{3l}(b_d)\big)\cdot [\si(\mf_0),f_{12}]+\nonumber\\
&+\si\big( \{4l\}\{2ld\}^{-1}\tau^{5l}(b_{d-1})\big)\cdot[\si(\mf_0),f_{13}]+\nonumber\\
&\quad\cdots\nonumber\\
&+\si\big( \{2l(d-1)\}\{2ld\}^{-1} \tau^{l+2l(d-1)}(b_2)\big)\cdot[\si(\mf_0),f_{1d}].
\end{align*}
Resubstituting $b_1=-1/\overline{a_1}=-\overline{\al_d}/\overline{\al_0}$
and $b_s=-\overline{a_s}/\overline{a_1}=-\overline{\al_{s-1}}/\overline{\al_0}$
(for $s>1$),
we conclude that, in view of the final case in relation \eqref{eq:2ndkindXpf},
the map $V(\om,w_0w_0^\shrp,g)\to V^\sharp$, $[\mf,e_{ks}]\mapsto [\mf,f_{ks}]$
will be an $A$-module isomorphism if $g$ is given by
\begin{align*}
\{2ld\}\cdot g &=
\tau^l(\overline{\al_d}/\overline{\al_0}) +\nonumber\\
&+\{2l\}\cdot \tau^{3l}(\overline{\al_{d-1}}/ \overline{ \al_0} ) \cdot x+\nonumber\\
&+\{4l\}\cdot \tau^{5l}(\overline{\al_{d-2}}/\overline{\al_0} ) \cdot x^2+\nonumber\\
&\quad\cdots\nonumber\\
&+\{2l(d-1)\}\cdot \tau^{l+2l(d-1)}(\overline{\al_1}/\overline{\al_0} ) \cdot x^{d-1}+\nonumber\\
&+\{2ld\}\cdot x^d.
\end{align*}
Thus $\{2ld\}\cdot g\cdot \tau^l(\overline{\al_0}) = f^\sharp $
so $g$ is similar to $f^\sharp$. This finishes the proof
that $V^\sharp\simeq V(\om,w_0w_0^\shrp,f^\sharp)$.
\end{proof}

\begin{remark}\label{rem:interpretZ}
The indecomposable weight
module $V=V(\om,w,f)$, $w=z_1\cdots z_n$, has the the following
characterizing properties:
\begin{enumerate}
\item[1)] the operator $Z=Z(w):V_{\mf_0}\to V_{\mf_0}$ given by $Z=Z_n\cdots Z_2Z_1$
where
\[Z_i=\begin{cases}(t_i)^{-1}X^{p_i}, & z_i=x,\\
(t_i)^{-1}X^{p_i-1}Y^{-1},& z_i=y,\end{cases}\]
is well-defined and single-valued (since $w$ is non-periodic), and
\item[2)] giving $V_{\mf_0}$ the structure of a module over $\K_\om[x;\tau^{n/m}]$ by
\[ x . v = Zv ,\quad v\in V_{\mf_0},\]
there exists a nonzero vector in $V_{\mf_0}$ which is annihilated by $f$.
\end{enumerate}
What we prove in Theorem \ref{thm:2ndkind_w}
is that $Z(w^\shrp)$ is well-defined on the $\mf_0$-weight space of
$V(\om,w,f)^\sharp$, while
in Theorem \ref{thm:2ndkind_f} we prove that when $V=V(\om,w_0w_0^\shrp,f)$,
the space $(V^\sharp)_{\mf_0}$ contains a nonzero vector annihilated by 
a skew polynomial similar to $f^\sharp$.
Therefore $V^\sharp\simeq V(\om,w_0w_0^\shrp,f^\sharp)$.
\end{remark}

\section{Examples} \label{sec:examples}
\subsection{Noncommutative type-A Kleinian singularities}
Let $R=\C[H]$
and $\si\in\Aut_\C(H)$ be given by $\si(H)=H-1$ and $t\in R$ be arbitrary.
The generalized Weyl algebra $A=R(\si,t)$ was studied in \cite{B} and \cite{H}.
For example, all simple modules (not only weight modules)
were classified in \cite{B}.
Let $\ast$ be the $\mathbb{R}$-algebra automorphism of $R$ given by
$i^\ast=-i$, $H^\ast=H$.
Suppose that $t^\ast=t$ i.e. that $t=f(H)$, where the polynomial $f$ has real coefficients.
Since any orbit is infinite,
Theorem \ref{thm:inforbnobr} and Theorem \ref{thm:inforbwbr}
implies that
an indecomposable weight module with real support
is pseudo-unitarizable iff it is simple.

\subsection{The enveloping algebra of $\mathfrak{sl}_2$} \label{sec:ex_usl2}
Let $R=\C[h,t]$ and let $\si\in\Aut_\C(R)$ be given by $\si(h)=h-2$,
 $\si(t)=t+h$. Then $A=R(\si,t)\simeq U(\mathfrak{sl}_2)$.
Define $\ast\in\Aut_{\mathbb{R}}(R)$ by $h^\ast=h, t^\ast=t, i^\ast=-i$.
Here, as in the previous example, all orbits are infinite
so indecomposable weight modules with real support are
pseudo-unitarizable iff they are simple.

By induction one checks that $\si^n(t)=-n^2+(h+1)n+t,\;\forall n\in\Z$.
Thus, for any $\mu,\al\in\mathbb{R}$,
 \[\lim_{n\to\pm\infty} \big\{ \si^n(t)\;\text{mod}\; (h-\mu,t-\al)\big\}=
 \lim_{n\to\pm\infty} -n^2+(\mu+1)n+\al = -\infty.\]
In view of formulas \eqref{eq:Phila},\eqref{eq:Phila2},\eqref{eq:Phila3},
this shows that any non-degenerate symmetric admissible form on an infinite-dimensional
simple weight module with real support is necessarily indefinite.

On the other hand, on a finite-dimensional simple weight module $V(N)$
(with highest weight $N\in\Z_{\ge 0}$ and of dimension $N+1$), the form
 $\Psi_\la$ given by \eqref{eq:Phila2} with $\la>0$
is positive definite because
\[\si^n(t) \;\text{mod}\; (t,h-N) = n(N-n+1)>0 \]
for $n=1,2,\ldots,N$ so that
$\Psi_\la(Y^ne_0,Y^ne_0)>0$ for $n=0,1,\ldots,N$.

\subsection{The quantum enveloping algebra of $\mathfrak{sl}_2$} \label{sec:ex_uqsl2}
Let $R=\C[K,K^{-1},t]$ and $q\in\C\backslash\{-1,0,1\}$.
Define $\si\in\Aut_\C(R)$ by $\si(K)=q^{-2}K, \si(t)=t+\frac{K-K^{-1}}{q-q^{-1}}$.
Then $R(\si,t)\simeq U_q(\mathfrak{sl}_2)$. We assume here that $q^2$ is
a root of unity of order $p>1$.
Let $\ast\in\Aut_{\mathbb{R}}(R)$ be given by
$K^\ast=K^{-1}$, $i^\ast=-i$, $t^\ast=t$.
One verifies that $\si$ commutes with $\ast$ and that $\si$ has order $p$.
All orbits have $p$ elements and are torsion trivial (recall Definition \ref{dfn:torsion_trivial}).
Let $\om\in\Om$ and $\mf=(K-\mu,t-\al)\in\om$. Then $\om$ is real
iff $\mf^\ast=\mf$ which holds iff $|\mu|=1$ and $\al\in\mathbb{R}$.
Assume $\om$ is real and put $\mf(\om)=\mf$.
We identify $\K_\om=R/\mf$ with $\C$.
The real number
\begin{equation}\label{eq:uqsl2xi}
\xi=\big(\si(t)\si^2(t)\cdots\si^p(t)\big)_\mf=
\prod_{k=0}^{p-1}\Big(\al+\sum_{i=0}^k\frac{q^{-2i}\mu-q^{2i}\mu^{-1}}{q-q^{-1}}\Big)
\end{equation}
is nonzero iff there are no breaks in $\om$. 

Assume that $\xi\neq 0$ and consider the modules $V(\om,f)$.
Since $\si^p=\Id$,
the skew Laurent polynomial ring $\K_\om[x,x^{-1};\tau]$, to which
$f$ belongs, is just 
the ordinary commutative Laurent polynomial ring $P=\C[x,x^{-1}]$.
Similarity in $P$ just means
equality up to multiplication by nonzero homogenous term.
Any indecomposable element in $P$ is similar to $f=(x-a)^d$
for some $a\in\C\backslash\{0\}$, $d\ge 1$. By Theorem \ref{thm:Vomfdual},
$V(\om,f)^\sharp\simeq V(\om,f^\sharp)$ where
$f^\sharp=(\xi x)^d((\xi x)^{-1}-\overline{a})^d = (1-\overline{a}\xi x)^d
\sim (x-(\overline{a}\xi)^{-1})^d$. Thus
we conclude that $V(\om,f)$, where $\om$ is a real orbit without breaks containing
$(K-\mu, t-\al)$ and $f=(x-a)^d$,
is pseudo-unitarizable
iff $a=(\overline{a}\xi)^{-1}$, that is, iff $|a|^2=\xi^{-1}$,
where $\xi$ is given by \eqref{eq:uqsl2xi}.
It would be interesting to determine the values of $\al$ and $\mu$ for which
$\xi$ is positive so that $|a|^2=\xi^{-1}$ can hold.
We only note here that for any fixed $\mu$, the quantity $\xi$ is
a polynomial of degree $p$ in $\al$ with positive leading coefficient
and thus $\xi>0$ if $\al$ is sufficiently big.

Assume now that $\xi=0$. Then $\om$ has breaks and we can assume $\al=0$.
Recall that the break $\mf_0=\mf(\om)=\mf$.
For $k\ge 0$ we have
\[ \si^{k+1}(t) = t+\sum_{i=0}^k
 \frac{q^{-2i}K-q^{2i}K^{-1}}{q-q^{-1}}. \]
Thus the reduction modulo $\mf_0$ is
\begin{equation}\label{eq:uqsl2useful}
\big(\si^{k+1}(t)\big)_{\mf_0} =
\sum_{i=0}^k \frac{q^{-2i}\mu-q^{2i}\mu^{-1}}{q-q^{-1}} =
 \frac{(1-q^{2(k+1)})(1-\mu^2 q^{-2k})}{\mu q(q-q^{-1})^2} 
\end{equation}
This shows that, for $0\le k\le p-2$,
\begin{equation} \label{eq:exuqsl2mu2}
 \si^{-(k+1)}(\mf_0)\in B_\om \Longleftrightarrow \mu^2=q^{2k}.
\end{equation}
By \eqref{eq:exuqsl2mu2} we have
\[
B_\om =
\begin{cases}
\{\mf_0, \mf_1=\si^{-(k+1)}(\mf_0) \}, &\text{if $\mu^2=q^{2k}$ where $0\le k\le p-2$,} \\
\{\mf_0\},& \text{if $\mu\notin\{\pm 1, \pm q, \ldots, \pm q^{p-2}\}$,}
\end{cases}
\]
Call $\mu$ \emph{generic} if 
$\mu\notin\{\pm 1, \pm q, \ldots, \pm q^{p-2}\}$
and \emph{specific} otherwise. If $\mu$ is specific,
we let $r$ ($0\le r\le p-2$) denote the unique integer such that $\mu^2=q^{2r}$.
Let $m=|B_\om|$. By \eqref{eq:exuqsl2mu2}, $m=1$ if $\mu$ is generic
and $m=2$ if $\mu$ is specific.
Recall the definition of $p_i$ from Section \ref{sec:notation}.
For specific $\mu$ we have $p_1=p-(r+1)$ and $p_2=r+1$.

By Theorem \ref{thm:firstkind}, a module of the form $V(\om,j,w)$
is pseudo-unitarizable iff it is simple, which holds
iff $w=\ep$, the empty word.
If $\mu$ is generic then there is only one such module, $V(\om,0,\ep)$.
If $\mu$ is specific then there are two such modules, $V(\om,0,\ep)$ and $V(\om,1,\ep)$.

If $V=V(\om,w=z_1\cdots z_n,f=(x-a)^d)$, then by Theorem \ref{thm:2ndkind}, $V$ 
is pseudo-unitarizable iff $w=w_0w_0^\shrp$ where $w_0$ is a non-empty $m$-word
(so for generic $\mu$ the word $w_0$ is arbitrary, while for specific $\mu$,
it has to be of even length) and $f$ is similar to $f^\sharp$ in $\C[x]$.
Let $(a;s)_i$ denote the shifted factorial
\[(a;s)_i = (1-a)(1-as)\cdots (1-as^{i-1})\]
and for $j<i$ let $(a;s)_i^{(j)}$ denote $(a;s)_i$ but with the factor $(1-as^j)$ omitted.
By \eqref{eq:2nd_fsharp} the polynomial $f^\sharp$ is given by
\[f^\sharp = \sum_{k=0}^d Q^{nk} \overline{\al_{d-k}} \cdot x^k =
 (Q^n x)^d \cdot \overline{f\big((Q^n x)^{-1}\big)} = (1-Q^n\overline{a}x)^d
 \sim \big( x-(Q^n \overline{a})^{-1} \big) ^d,\]
where $Q$ is the nonzero real number given by
\begin{equation}
Q= t_1 =
 \frac{(q^2 ; q^2)_{p-1} \cdot (\mu^2 ; q^{-2})_{p-1}}{(\mu q (q-q^{-1})^2)^{p-1} },
\quad \text{if $\mu$ is generic},
\end{equation}
and
\begin{equation}
Q=\si^{p_2}(t_1)t_2 =
 \frac{(q^2;q^2)_{p-1}^{(r)} \cdot (\mu^2 ; q^{-2})_{p-1}^{(r)}}{(\mu q (q-q^{-1})^2)^{p-2}},
\quad \text{if $\mu$ is specific, $\mu^2=q^{2r}$}.
\end{equation}
We conclude that $V=V(\om,w=z_1\cdots z_n,f=(x-a)^d)$,
($\om$ a real orbit containing a break $\mf=(t,K-\mu)$)
has a non-degenerate admissible form iff $w=w_0w_0^\shrp$,
where $w_0\in\mathbf{D}\backslash\{\ep\}$
has even length if $\mu$ is specific, and $|a|^2=Q^{-n}$.
Since $n$ is even, solutions $a\in\C$ to this equation always exist.

Irreducible representations of $U_q(\mathfrak{sl}_2)$ which are unitarizable
with respect to a positive definite form were described in \cite{V}.
This corresponds to the case when all the factors in \eqref{eq:uqsl2xi}
are nonnegative.

\subsection{When $R$ is a field}
We note that in the special case when $R=\K$ is a field,
there is only one orbit $\om_0$ consiting of the zero ideal alone.
The orbit $\om_0$ is real, and contains a break iff $t=0$. Furthermore,
$\om_0$ is torsion trivial iff $\si$ is trivial. An indecomposable
weight module over $A=R(\si,t)$ is then of the form $V(\om,f)$
if $t\neq 0$, where $f\in\K[x,x^{-1};\si]$ and
$V(\om,j,w)$ or $V(\om,w,f)$ if $t=0$, where
$f\in\K[x;\si^n]$ ($n=|w|$).
This shows that any skew polynomial ring can occur.

\subsection{An example of a module of the second kind} \label{sec:ex_2ndkind}
Let $R=\C[u,t]$, $\si\in\Aut_\C(R)$ defined by $\si(u)=1-u, \si(t)=t$.
Then the orbits have the form
$\om_{\mu,\al}=\{(u-\mu,t-\al), (u-(1-\mu), t-\al)\}$,
where $\mu,\al\in\C$. All orbits are torsion trivial and
have two elements, except for $\om_{1/2,\al}$ which has only one element.
The orbit $\om_{\mu,\al}$ contains no breaks if $\al\neq 0$,
and all elements of $\om_{\mu,0}$ are breaks.
Define $\ast\in\Aut_{\mathbb{R}}(R)$ by $u^\ast=u$, $t^\ast=t$, $i^\ast=-i$.
Then $\om_{\mu,\al}$ is real iff $\mu,\al\in\mathbb{R}$.

Let $\om=\om_{0,0}$.
Let $\mf(\om)=\mf_0=(u,t)$ and $\si(\mf_0)=\mf_1=(u-1,t)$.
Then $B_\om=\om$, $p=|\om|=2$, $m=|B_\om|=2$.
We identify $\K_\om=R/\mf(\om)$ with $\C$. The map $\tau$ is the identity
since $\om$ is torsion trivial.
Let $f=a_1+a_2x+x^2\in\C[x]$, $a_1\neq 0$, let $w=xxyy$ and
let $V=V(\om,w,f)$. The weight module $V$ is decomposable iff $f$
has distinct roots.

Since $\si(\mf_0)=\mf_1$ and $\si(\mf_1)=\mf_0$, the
integers $p_1$ and $p_2$ (defined in Section \ref{sec:notation})
both equal one. Thus, recalling
definitions \eqref{eq:qdef}, \eqref{eq:tidef} of $q$, $t_1,t_2$,
we have $t_1=t_2=1$ and $q=1$.
By Theorem \ref{thm:2ndkind_f}, $V^\sharp\simeq V(\om,w,f^\sharp)$
where $f^\sharp= 1+\overline{a_2}x+\overline{a_1}x^2\sim
 1/\overline{a_1}+\overline{a_2}/\overline{a_1}\cdot x+x^2$.
Thus $V\simeq V^\sharp$ iff $a_1=1/\overline{a_1}$,
$a_2=\overline{a_2}/\overline{a_1}$.

The module $V$ has the following structure.
We have $V=V_{\mf_0}\oplus V_{\mf_1}$.
Since $j(\mf_0)=0$ and $j(\mf_1)=1$,
$V_{\mf_0}$ has a basis $\{e_{21},e_{22},e_{41},e_{42}\}$ and
$V_{\mf_1}$ has a basis $\{e_{11},e_{12},e_{31},e_{32}\}$.
\begin{figure}
\caption{Weight diagram for $V$}
\label{fig:8dim}
\[\xymatrix@R-10pt{
&\bullet \save[]+<0pt,-9pt>*{e_{11}}+<0pt,18pt>*{\mf_1}\restore \ar[r]
&\bullet \save[]+<0pt,-9pt>*{e_{21}}+<0pt,18pt>*{\mf_0}\restore
&\bullet \save[]+<0pt,-9pt>*{e_{31}}+<0pt,18pt>*{\mf_1}\restore \ar[l]
&\bullet \save[]+<9pt,-9pt>*{e_{41}}+<-9pt,18pt>*{\mf_0}\restore \ar[l]
 \ar@(d,u)[ddlll]_X \\
&\\
&\bullet \save[]+<0pt,-9pt>*{e_{12}} \restore \ar[r]
&\bullet \save[]+<0pt,-9pt>*{e_{22}} \restore
&\bullet \save[]+<0pt,-9pt>*{e_{32}} \restore \ar[l]
&\bullet \save[]+<9pt,-9pt>*{e_{42}} \restore \ar[l]
 \ar@<-0pt> `d[llllu] `[llllu] `[lllu]^X [lllu]
\save "2,2"."1,2"."3,2"*[F]\frm{}
\restore
}\]
\end{figure}
See Figure \ref{fig:8dim}.
The module structure on $V$ is given by the following, where $s=1,2$:
\begin{align*}
\begin{cases}
Xe_{1s}=e_{2s},\\ 
Xe_{2s}=Xe_{3s}=0,\\ 
Xe_{41}=e_{12},&\\
Xe_{42}=-a_1e_{11}-a_2e_{12},&
\end{cases}
&&
\begin{cases}
Ye_{1s}=0,\\
Ye_{2s}=0,\\ 
Ye_{3s}=e_{2s},\\ 
Ye_{4s}=e_{3s}.\\ 
\end{cases}
\end{align*}
Let us show explicitly that $V^\sharp\simeq V(\om,w,f^\sharp)$.
Let $\{e_{ks}^\sharp : 1\le k\le 4, s=1,2\}$ be the dual basis in $V^\sharp$, i.e.
 $e_{ks}^\sharp(e_{ij})=\delta_{ki}\delta_{sj}$.
Then $\{e_{2s}^\sharp,e_{4s}^\sharp : s=1,2\}$ is a basis for $(V^\sharp)_{\mf_0}$
and $\{e_{1s}^\sharp, e_{3s}^\sharp : s=1,2\}$ is a basis for $(V^\sharp)_{\mf_1}$.
For $s=1,2$ we have
\begin{align*}
\begin{cases}
Xe_{1s}^\sharp = 0,\\
Xe_{2s}^\sharp = e_{3s}^\sharp,\\
Xe_{3s}^\sharp = e_{4s}^\sharp,\\
Xe_{4s}^\sharp = 0,
\end{cases}
&&
\begin{cases}
Ye_{11}^\sharp = - \overline{a_1} e_{42}^\sharp ,\\
Ye_{12}^\sharp = e_{41}^\sharp - \overline{a_2} e_{42}^\sharp ,\\
Ye_{2s}^\sharp = e_{1s}^\sharp ,\\
Ye_{3s}^\sharp = Ye_{4s}^\sharp = 0.
\end{cases}
\end{align*}
Set $b_1=-1/\overline{a_1}$ and $b_2=-\overline{a_2}/\overline{a_1}$ and
\begin{align}\label{eq:ex_f38}
\begin{cases}
f_{11}=e_{31}^\sharp ,\\
f_{21}=e_{41}^\sharp ,\\
f_{31}=b_2e_{11}^\sharp+e_{12}^\sharp ,\\
f_{41}=b_2e_{21}^\sharp+e_{22}^\sharp ,
\end{cases}
&&
\begin{cases}
f_{12}=b_2e_{31}^\sharp+e_{32}^\sharp ,\\
f_{22}=b_2e_{41}^\sharp+e_{42}^\sharp ,\\
f_{32}=(b_1+b_2^2)e_{11}^\sharp+b_2e_{12}^\sharp ,\\
f_{42}=(b_1+b_2^2)e_{21}^\sharp+b_2e_{22}^\sharp .
\end{cases}
\end{align}
We have $Xf_{42}=b_1 f_{11}+b_2f_{12}$.
Set $g(x)=-b_1 - b_2 x+x^2$. Then one verifies that
 $V^\sharp \simeq V(\om,w,g)$ via the map $f_{ks}\mapsto e_{ks}$.
 Since $g\sim f^\sharp$ we deduce that $V^\sharp\simeq V(\om,w,f^\sharp)$.
Thus, since polynomials in $\C[x]$ are similar iff they differ by a multiplicative scalar,
 $V\simeq V^\sharp$ iff $f=g$, i.e. iff $a_1=1/\overline{a_1}$ and $a_2=\overline{a_2}/\overline{a_1}$.
It is easy to check that
\[E:=\{(a_1,a_2)\in\C^2\; :\; a_1=1/\overline{a_1}, a_2=\overline{a_2}/\overline{a_1}\}
=\{(\zeta ^2, x\zeta) \; :\; x\in\mathbb{R}, \zeta\in\C, |\zeta |=1 \}\]
and
$(\zeta_1 ^2, x_1\zeta_1)=(\zeta_2 ^2, x_2\zeta_2)$ iff $(\zeta_1,x_1)=\pm (\zeta_2,x_2)$.

If $(a_1,a_2)\in E$, the non-degenerate admissible $\C$-form $\widehat{\Phi}$ corresponding
to the isomorphism $\Phi:V\to V^\sharp$, $\Phi(e_{ks})=f_{ks}$ is
\[\widehat{\Phi}(e_{ks},e_{lr})=\big(\Phi(e_{ks})\big)(e_{lr})=f_{ks}(e_{lr}).\]
Using \eqref{eq:ex_f38} and that $(e_{ks}^\sharp)(e_{lr})=\delta_{kl}\delta_{sr}$,
an explicit matrix for $\widehat{\Phi}$ in the basis $\{e_{ks}\}$ can be written down.
As a curious aside we mention that
the zero-set of the determinant of the symmetrized form
$\widehat{\Phi}+\widehat{\Phi}^\sharp$
as a function of $z\in\C\backslash\{1\}$ via $a_2=1-z$, $a_1=(1-z)/(1-\overline{z})$
is the curve known as the \emph{lima\c{c}on trisectrix}. It has certain special
geometric properties and is parametrized in polar coordinates by $r=1+2\cos\theta$.
Thus, for points outside of this curve, $\widehat{\Phi}+\widehat{\Phi}^\sharp$ is
the unique symmetric non-degenerate admissible form, by Remark \ref{rem:MTresults}.

\subsection*{Acknowledgements}
The author would like to thank L. Turowska for many interesting discussions and
helpful comments.

\end{document}